\documentclass[11pt,a4paper,reqno]{amsart}
\usepackage{amsmath, amssymb, latexsym, enumerate,  graphicx,tikz, stmaryrd, eufrak,enumitem}

\usepackage{thmtools}

\usepackage{hyperref}
\usepackage{cleveref}
\usepackage{bbm}
\usepackage{cases}
\usepackage{array}
\usepackage{multirow}
\usepackage{tikz-cd}
\usepackage[all]{xy}

\usepackage{mathtools}

\usepackage[lighttt]{lmodern}

\usepackage{listings}
\usepackage[hmarginratio={1:1},vmarginratio={1:1},lmargin=80.0pt,tmargin=90.0pt]{geometry}

\usepackage{dynkin-diagrams}

\usepackage{tikz}
\tikzset{anchorbase/.style={baseline={([yshift=-0.5ex]current bounding box.center)}}}
\usetikzlibrary{decorations.markings}
\usetikzlibrary{decorations.pathreplacing}
\usetikzlibrary{arrows,shapes,positioning,backgrounds}
\tikzstyle directed=[postaction={decorate,decoration={markings,
    mark=at position #1 with {\arrow{>}}}}]
\tikzstyle rdirected=[postaction={decorate,decoration={markings,
    mark=at position #1 with {\arrow{<}}}}]
    
\usepackage{graphicx}
\usepackage{nicefrac}

 \newlength{\baseunit}               
 \newcount{\numlines}                
\newtheorem{thm}{Theorem}
\newtheorem{ques}[thm]{Question}

\newtheorem{theorem}[subsection]{Theorem}
\newtheorem{lemma}[theorem]{Lemma}
\newtheorem{prop}[theorem]{Proposition}
\newtheorem{corollary}[subsection]{Corollary}

\theoremstyle{definition}

\newtheorem{remark}[theorem]{Remark}

\newcommand{\mG}{\mathbb{G}}

\newcommand{\bk}{\mathbf{k}}

\newcommand{\cC}{\mathcal{C}}
\newcommand{\cD}{\mathcal{D}}

\numberwithin{equation}{section}

\newcommand{\ev}{\mathrm{ev}}
\newcommand{\coev}{\mathrm{coev}}
\newcommand{\braid}{\mathrm{braid}}

\newcommand{\im}{\mathrm{Im}}

\newcommand{\charr}{\mathrm{char}}

\newcommand{\Ver}{\mathsf{Ver}}

\newcommand{\Mod}{\mathsf{Mod}}

\newcommand{\Rep}{\mathsf{Rep}}

\newcommand{\unit}{{\mathbbm{1}}}

\newcommand{\cO}{\mathcal{O}}

\newcommand{\mN}{\mathbb{N}}
\newcommand{\mZ}{\mathbb{Z}}

\newcommand{\mC}{\mathbb{C}}

\newcommand{\mQ}{\mathbb{Q}}
\newcommand{\fg}{\mathfrak{g}}

\newcommand{\fsl}{\mathfrak{sl}}
\newcommand{\fgl}{\mathfrak{gl}}
\newcommand{\fso}{\mathfrak{so}}
\newcommand{\fsp}{\mathfrak{sp}}
\newcommand{\fosp}{\mathfrak{osp}}

\newcommand{\End}{\mathrm{End}}

\newcommand{\Hom}{\mathrm{Hom}}
\newcommand{\Lie}{\mathrm{Lie}}

\newcommand{\Sym}{\mathrm{Sym}}
\newcommand{\id}{\mathrm{id}}

\newcommand{\Vecc}{\mathsf{Vec}}

\newcommand{\sVec}{\mathsf{sVec}}

\newcommand{\Indd}{\mathsf{Ind}}

\newcommand{\Tilt}{\mathsf{Tilt}}

\newcommand{\mF}{\mathbb{F}}

\newcommand{\Id}{\mathrm{Id}}
\newcommand{\Aut}{\mathrm{Aut}}
\newcommand{\ad}{\mathrm{ad}}

\newcommand{\GL}{\mathrm{GL}}

\newcommand{\SL}{\mathrm{SL}}

\newcommand{\SO}{\mathrm{SO}}

\newcommand{\Sp}{\mathrm{Sp}}

\newcommand{\incl}{\mathrm{incl}}
\newcommand{\proj}{\mathrm{proj}}

\newcommand{\sM}[1]{\left(\begin{smallmatrix}#1\end{smallmatrix}\right)}

\begin{document}
\title{Small Lie algebras in the Verlinde category}
\author{Joseph Newton}
\address{School of Mathematics and Statistics, University of Sydney, Australia}
\email{j.newton@maths.usyd.edu.au}
\date{\today}

\keywords{}

\setcounter{tocdepth}{1}

\begin{abstract}
This paper studies simple Lie algebras in the Verlinde category, with canonical fundamental group action, using combinatorial methods. We give an exhaustive list of such Lie algebras which either have length at most 3, or appear as a subalgebra of the restriction of the Lie algebra of a simple algebraic group along a principal morphism. The results and exceptions answer several questions and conjectures regarding Verlinde categories of algebraic groups and their Lie algebras.
\end{abstract}

\maketitle

\tableofcontents

\section*{Introduction}

Let $\bk$ be an algebraically closed field of characteristic $p$, and let $\Ver_p$ be the semisimple Verlinde category, which is the semisimplification of either the category of tilting modules of $\SL_2$ or the category of finite-dimensional representations of $\mZ/p\mZ$. By the theory of Tannaka-Krein duality from \cite{De} (see \cite[\S4]{incompressible} for a more complete treatment), any symmetric tensor category which fibres over some symmetric tensor category $\cD$ can be understood as representations of an affine group scheme in $\cD$, and the case $\cD=\Ver_p$ is especially interesting for a number of reasons. Firstly, $\Ver_p$ is incompressible, meaning it does not fibre over any smaller symmetric tensor category. Secondly, all Frobenius exact (e.g. semisimple) symmetric tensor categories of moderate growth fibre over $\Ver_p$ by \cite{frobenius}. And thirdly, it is shown in \cite{Ve1} that affine group schemes in $\Ver_p$ admit a theory of Harish-Chandra pairs, similarly to supergroups, so a lot of the data of such a group is contained in its Lie algebra. Thus it is extremely relevant to the wider study of tensor categories to understand Lie algebras in $\Ver_p$. Recent results studying the structure and classification of Lie algebras in $\Ver_p$ (and its generalisation $\Ver_{p^n}$) can be found in \cite{CEN,EKO,Hu}, and results regarding Lie algebras in more general symmetric tensor categories include \cite{APS,BP}.

Tannaka-Krein duality introduces an additional piece of data to an affine group scheme $G$ over a category $\cD$, which is a morphism $\pi(\cD)\to G$ where $\pi(\cD)$ is the fundamental group of $\cD$, inducing the canonical action of $\pi(\cD)$ on $\cO(G)$. Thus, we are interested in studying Lie algebras $\fg$ in $\cD$ equipped with a morphism $\Lie(\pi(\cD))\to\fg$ inducing the canonical action, which we call a ``based Lie algebra.'' For $\cD=\Ver_p$ we will typically omit the morphism, since we show in Proposition~\ref{PropUnique} that it is unique. Since the representation theory of arbitrary Lie algebras in $\Ver_p$ is encompassed by the representation theory of based Lie algebras (see Remark~\ref{RemBased}), we focus on the based case in this paper.

The main result of this paper is the following theorem. This theorem proves \cite[Conjecture~4.4]{asymptotic}, a conjectural classification of length 2 simple based Lie algebras in $\Ver_p$, and also answers the analogous question about length 3. It also gives a complete answer to \cite[Question~6.5]{EO} regarding surjective tensor functors out of the categories $\Ver_p(G)$ for simple algebraic groups $G$. Since $\Ver_2$ and $\Ver_3$ are equivalent to vector spaces and super vector spaces respectively, we focus on $p\geq5$. For a simple algebraic group $G$ and a map $\phi:\SL_2\to G$ such that $\Lie(G)$ is a tilting module of $\SL_2$ under the action coming from $\phi$, we write $\Lie_p(G,\phi)$ for the image of $\Lie(G)$ in $\Ver_p$.

\begin{thm}\label{ThmMain}
Let $\bk$ be an algebraically closed field with $\charr\bk=p\geq5$. If $\fg$ is a based Lie algebra in $\Ver_p$ satisfying either of the following conditions:
\begin{enumerate}[label=(\alph*)]
\item $\fg$ is simple, has length at most 3, and is not in $\sVec\subset\Ver_p$; or
\item $\fg$ is a subalgebra of $\Lie_p(G',\phi')$ for some simple algebraic group $G'$ and a principal morphism $\phi':\SL_2\to G'$,
\end{enumerate}
then $\fg\cong\Lie_p(G,\phi)$ for some simple algebraic group $G$ and morphism $\phi:\SL_2\to G$.
\end{thm}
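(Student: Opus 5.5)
We argue combinatorially, working with the simple objects $L_1=\unit,L_2,\dots,L_{p-1}$ of $\Ver_p$ and the truncated Clebsch--Gordan fusion rules. A based Lie algebra $\fg$ comes with the morphism $\Lie(\pi(\Ver_p))\to\fg$, unique by Proposition~\ref{PropUnique}. Since $\pi(\Ver_p)$ is essentially the image of $\SL_2$ and $\Lie(\pi(\Ver_p))\cong L_3$ (an $\fsl_2$-triple image), $\fg$ contains a distinguished copy of $L_3$. The bracket $L_3\otimes\fg\to\fg$ is then forced: it is the canonical action, so it is determined up to isomorphism. We therefore write $\fg=\bigoplus_i L_{m_i}$ with $L_3$ among the summands, and we classify the remaining bracket components $L_{m_i}\otimes L_{m_j}\to L_{m_k}$ subject to antisymmetry, the Jacobi identity and simplicity. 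Each such component is a scalar times the unique (up to scalar) fusion map, so every condition becomes an equation in finitely many scalars.

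For (a), we split by length. Length $1$ cannot occur outside $\sVec$ beyond $L_3$ itself, and $L_3=\Lie_p(\SL_2,\id)$. For length $2$ we have $\fg=L_3\oplus L_m$. The bracket $\wedge^2 L_m\to L_3\oplus L_m$ is constrained by which of $L_3$ and $L_m$ occur in $\wedge^2L_m$, and this depends on the parity of $m$ and on $m$ relative to $p$. We will evaluate the Jacobi identity on $L_m\otimes L_m\otimes L_m$ using $6j$-symbols of $\Ver_p$ (the semisimplified $\SL_2$ Racah coefficients), which leaves a short list of admissible $m$. Each survivor is then matched with a known $\Lie_p(G,\phi)$: for instance $m=p-2$ gives $\fsl_3$, $\fsp_4$ or $G_2$-type examples, and $m=7$ gives $G_2$ with $p\geq 7$. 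This also proves \cite[Conjecture~4.4]{asymptotic}. Length $3$ is handled in the same way, but with more cases: $L_3\oplus L_a\oplus L_b$, or two copies of $L_3$.

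To match survivors with groups without solving Jacobi twice, we will argue as follows. We compute $\Lie_p(G,\phi)$ for small-rank $G$ and all $\phi$ with tilting $\Lie(G)$, using Jacobson--Morozov decompositions truncated mod $p$ (drop summands $L_{\geq p}$, reduce via semisimplification). We then show that for a fixed underlying object the based bracket is unique up to rescaling, since the space of solutions to Jacobi is one-dimensional once simplicity is imposed.

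For (b), we restrict $\Lie(G')$ along a principal $\phi'$. Its image in $\Ver_p$ is $\bigoplus L_{2e_i+1}$ over the exponents $e_i$, truncated. A subalgebra $\fg$ containing the canonical $L_3$ is therefore a sum of some of these summands, closed under bracket. For each simple type we enumerate the subsets of exponents closed under the fusion-compatible bracket, using known nonvanishing of brackets among principal-grading pieces before semisimplification. We then identify each closed subset as $\Lie_p(G,\phi)$ with $G\subset G'$ a subgroup containing the principal $\SL_2$; Dynkin's and Seitz's classifications of such subgroups guide this, and case (a) handles small lengths. The main obstacle is showing that a closed subset cannot arise in $\Ver_p$ without coming from an honest subgroup. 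Semisimplification kills some brackets, so new closed subsets can appear. We would first try to lift closedness to a statement about tilting summands in $\Lie(G')$ and invoke a Lie subalgebra/subgroup correspondence there, falling back on explicit case checks for exceptional types and small $p$.
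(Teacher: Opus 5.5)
Throughout I use your indexing ($L_m$ is the image of $V_{m-1}$, so the canonical copy is $L_3$); the paper writes $L_n$ for the image of $V_n$.

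\textbf{Part (a).} Your framework is the paper's: one scalar per fusion component, with antisymmetry and Jacobi imposed via $6j$-symbols. The gap is the sentence ``which leaves a short list of admissible $m$''. That is where all the work lies, and you give no mechanism that works uniformly in $m$ and $p$. The missing idea is Proposition~\ref{PropRational}. For fixed small $u$, $\alpha^{2x,2x,2u}_{2y,2y,2z}=f_u(x,y,z)g(x,y,z)$, where $f_u$ is a rational function whose denominator is prime to $p$ in the relevant range, and $g$ vanishes exactly when $x+y+z\geq p-1$. The Jacobi equations for a few small $u$ then become polynomial equations in $x$ (or in $(x,y)$ for length $3$). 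Elimination, by resultant-type identities or Gr\"obner bases, produces integers such as $204240=2^4\cdot3\cdot5\cdot23\cdot37$, whose prime factors are the only characteristics where extra solutions can occur. This is exactly how the sporadic objects $L_3\oplus L_{15}$ at $p=23$ ($E_7$) and $L_3\oplus L_{23}$ at $p=37$ ($E_8$) arise, and your plan would not find them.

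Your sample identifications are also wrong:
\begin{itemize}
\item $L_3\oplus L_7$ is $\Lie_p(\SO_5)\cong\Lie_p(\Sp_4)$, not $G_2$;
\item $G_2$ is $L_3\oplus L_{11}$, for $p\geq17$;
\item $m=p-2$ is the infinite family $\Lie_p(\SO_{p-1})$.
\end{itemize}

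The uniqueness claim you use for matching is false. For $p\geq17$ the object $L_3\oplus L_7\oplus L_{11}$ carries two non-isomorphic simple based structures, $\Lie_p(\SO_7)$ and $\Lie_p(\Sp_6)$. For $L_3\oplus L_7\oplus L_7$ at $p=11$, the isomorphism needs a non-diagonal change of decomposition. So you must classify solutions up to all automorphisms of the object, not just rescalings. Matching also needs non-principal $\phi$ on large exceptional groups, e.g.\ $L_3\oplus L_{15}\oplus L_{27}\cong\Lie_{29}(E_8,a_1)$, so computing only small-rank $G$ is not enough.

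\textbf{Part (b).} Here the proposed route cannot work. The statement you hope to prove is that every closed subobject of $\Lie_p(G',\phi')$ comes from a subgroup $G\subset G'$ containing the principal $\SL_2$. This is false, and not only for small $p$:
\begin{itemize}
\item for every $p\geq11$, $\Lie_p(\SL_{(p\pm1)/2})$ contains $L_3\oplus L_{p-2}\cong\Lie_p(\SO_{p-1})$;
\item $\Lie_{29}(E_7)$ contains $L_3\oplus L_{15}\oplus L_{27}\cong\Lie_{29}(E_8,a_1)$.
\end{itemize}
The groups realising these are too large to embed in $G'$, and in the second case $\phi$ is not principal. These subalgebras exist only because specific bracket components vanish mod $p$. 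No lifting to a subgroup correspondence in $\Lie(G')$ can see them.

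Your fallback ``case (a) handles small lengths'' is the right reduction. To use it, though, you must first prove that every subalgebra not coming from a principal pair has length at most $3$ and is simple. That requires knowing exactly which components $L_a\otimes L_b\to L_c$ vanish mod $p$. The paper gets this by direct computation:
\begin{itemize}
\item types $A$, $B$, $C$: the known classification of subalgebras of $\fsl(L_n)$;
\item type $D$: the $6j$-formula for the bracket on $\fgl(L_n)$ (Lemma~\ref{LemglImage}) together with Lemma~\ref{LemTypeD};
\item type $E$: characteristic-zero bracket scalars reduced mod $p$, with explicit bad primes.
\end{itemize}
It then shows each summand generates the smallest subalgebra containing it, except in the cases of Theorem~\ref{ThmSubalgs}. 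All of those exceptions have length $2$ or $3$, contain $L_3$ and have no proper non-trivial subalgebras. They are then identified using the length $2$ and $3$ classification.

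Two further inaccuracies in your (b):
\begin{itemize}
\item With multiplicities, subalgebras need not be sums of your fixed summands. In $\Lie_{17}(\SO_{12})=L_3\oplus L_7\oplus L_{11}\oplus L_{11}$, the subalgebra $L_3\oplus L_{11}\cong\Lie_{17}(G_2)$ uses a diagonal $L_{11}$. Proposition~\ref{PropTypeD} finds it by parametrising the embeddings $L_{11}\hookrightarrow L_{11}^{\oplus2}$.
\item The image of $\Lie(G')$ is not a truncation of $\bigoplus V_{2e_i}$. For example, $\Lie_p(\SL_{p-2})\cong\Lie_p(\SL_2)$ is just $L_3$.
\end{itemize}
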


All Lie algebras satisfying (a) or (b) are explicitly enumerated in Theorems~\ref{ThmSubalgs}, \ref{ThmLength1}, \ref{ThmLength2} and \ref{ThmLength3}. In every case except for one, the morphism $\phi$ is principal (meaning its image contains a regular unipotent element of $G$), and $\Rep_{\Ver_p^+}(\fg,\Lie(\phi))\simeq\Ver_p(G)$ for adjoint-type $G$, by the results of \cite[\S3]{CEN}. The exception is $\Lie_{29}(E_8,a_1)$ in $\Ver_{29}^+$ with underlying object $L_2\oplus L_{14}\oplus L_{26}$, where $a_1$ corresponds to a certain subregular nilpotent in $\Lie(G)$ (see Remark~\ref{RemE8a1}). We observe that this appears as a subalgebra of $\Lie_{29}(E_7)$, which allows us to show that its representation category is semisimple and hence a counterexample to \cite[Conjecture~4.1]{asymptotic}.

In Section~\ref{SecComp} we report the results of a computational search for simple based Lie algebras in $\Ver_p$ on objects of length 4 for $p\leq67$. We find that many of these arise from non-principal restrictions of ordinary algebraic groups in a similar manner to $\Lie_{29}(E_8,a_1)$. There are also two types of length 4 simple based Lie algebras which are not the restriction of a simple algebraic group at all, and can instead be obtained from the queer Lie superalgebra $Q(2)$ and the Hamiltonian Lie superalgebra $H(5)$. This gives a negative answer to \cite[Question~4.6]{asymptotic}, and suggests a replacement question:

\begin{ques}
Is every simple based Lie algebra in $\Ver_p$ isomorphic to the restriction of a simple Lie superalgebra $\fg$ to $\Ver_p$ via some morphism $\fsl_2\to\fg$?
\end{ques}

The methodology in this paper is a refinement of \cite[\S4.2]{CEN}. Specifically, we give a complete combinatorial description of the monoidal structure of $\Ver_p$ with respect to fixed direct sum decompositions of tensor products of simple objects, and this approach may have further applications.

The layout of the paper is as follows. Section~\ref{SecPrelim} establishes combinatorial formulas for certain $\SL_2$-module morphisms which will be used throughout. Section~\ref{SecLieAlgs} defines based Lie algebras, and characterises them in $\Ver_p$. Section~\ref{SecImages} discusses images of ordinary Lie algebras in $\Ver_p$, proves part (b) of Theorem~\ref{ThmMain}, and discusses the exceptional $E_8$, $Q(2)$ and $H(5)$ cases. Section~\ref{SecLieForm} sets up more general formulas for Lie algebras in $\Ver_p$, and starts the proof of part (a) of Theorem~\ref{ThmMain}. Section~\ref{SecLength3} completes the proof, focussing on length 3 Lie algebras. Finally, Section~\ref{SecComp} states some computational results for length 4 Lie algebras.

\subsection*{Acknowledgements} This work is part of PhD research supervised by Kevin Coulembier, and was supported by an Australian Government Research Training Program Scholarship. The author would also like to thank Pavel Etingof for helpful correspondence, especially in relation to Remark~\ref{RemE8a1} and Proposition~\ref{PropHQ}.

The large language model GPT-5.6 Sol was used for writing computer programs, finding references, and proofreading. All material in this paper was written by the author. 

\section{$\SL_2$ combinatorics}\label{SecPrelim}

Let us first establish combinatorial formulas characterising the monoidal structure in $\Rep\SL_2$. These will be used to describe the structure of $\Ver_p$ explicitly.

\subsection{Tensor products}\label{SecInclProj} Let $\bk$ be a field of any characteristic. For the algebraic group $\SL_2$ over $\bk$, we fix the Borel subgroup of upper triangular matrices and torus of diagonal matrices, and identify the weight space with $\mZ$. We write $\Rep\SL_2$ for the category of finite-dimensional representations of $\SL_2$. If $x$ and $y$ are the basis vectors for the standard 2-dimensional representation $V_1$ of $\SL_2$, then we can identify $\Sym V_1$ with the polynomial space $\bk[x,y]$. Then the dual Weyl module $V_n$ with highest weight $n\in\mN$ is the subspace of $\bk[x,y]$ spanned by monomials of degree $n$. We write $v_{n,i}$ (or just $v_i$) for the basis vector $x^iy^{n-i}$ in $V_n$.

We say that a triple $(a,b,c)\in\mN^3$ satisfies the \textbf{$0$-triangle condition} if $|a-b|\leq c\leq a+b$ and $a+b+c$ is even. If $\charr\bk=0$, then $V_c$ is a summand of $V_a\otimes V_b$ if and only if $(a,b,c)$ satisfies the $0$-triangle condition. If $p$ is a prime, then we say that the triple $(a,b,c)\in\mN^3$ satisfies the \textbf{$p$-triangle condition} if:
\begin{enumerate}
\item $a,b,c\leq p-2$,
\item $(a,b,c)$ satisfies the 0-triangle condition, and
\item $a+b+c\leq 2p-4$.
\end{enumerate}
If $\charr\bk=p$ and $a,b,c\leq p-2$, then $V_c$ is a summand of $V_a\otimes V_b$ if and only if $(a,b,c)$ satisfies the $p$-triangle condition, and this summand has multiplicity 1. All other summands of $V_a\otimes V_b$ are tilting modules with highest weights at least $p-1$ (see \cite[Lemma~1.3]{DH}).

Now let $p=\charr\bk$ be either zero or prime. For $(a,b,c)$ satisfying the $p$-triangle condition, write $\incl^{a,b}_c$ for the inclusion $V_c\hookrightarrow V_a\otimes V_b$ sending the highest weight vector $v_{c,c}=x^c$ to
\[\incl^{a,b}_c(v_{c,c})=\sum_{r=0}^\chi (-1)^r \binom{\chi}{r} v_{a,a-r}\otimes v_{b,b-\chi+r}\]
where $\chi=(a+b-c)/2$. Similar inclusions $\iota^{a,b}_c$ are defined in \cite{sl2comb}, which are related to these by $\incl^{a,b}_c=\frac{(\chi+c)!}{a!b!}\iota^{a,b}_c$. We then define $\proj^{a,b}_c$ to be the projection $V_a\otimes V_b\twoheadrightarrow V_c$ such that $\proj^{a,b}_c\circ\incl^{a,b}_c=\id_{V_c}$. Complete explicit formulas for these morphisms are as follows:
\begin{align*}
\incl^{a,b}_c(v_{c,i})&=\sum_{r=i-c}^{\chi}\sum_{s=\max(0,r)}^{\min(\chi,c-i+r)}(-1)^s\frac{\binom{\chi}{s}\binom{a-s}{a-c+i-r}\binom{b-\chi+s}{b-\chi+r}}{\binom ci}v_{a,a-c+i-r}\otimes v_{b,b-\chi+r},\\
\proj^{a,b}_c(v_{a,i}\otimes v_{b,j})&=\frac{c+1}{\chi+c+1}\binom{c}{a-\chi}\left(\sum_{r=0}^\chi (-1)^r\frac{\binom{a-i}{r}\binom{b-j}{\chi-r}}{\binom{\chi+c}{a-r}}\right)v_{c,i+j-\chi}
\end{align*}
where $\proj_c^{a,b}(v_{a,i}\otimes v_{b,j})=0$ whenever $i+j-\chi<0$ or $i+j-\chi>c$. For $(a,b,c)$ not satisfying the $p$-triangle condition, we set $\incl^{a,b}_c=0$ and $\proj^{a,b}_c=0$. Note that for $p>0$ the coefficients in the formulas for $\incl^{a,b}_c$ and $\proj^{a,b}_c$ above are in $\mZ_{(p)}=\{\frac rs\mid r,s\in\mZ,p\nmid s\}$, and hence are well defined in $\bk$ via $\mZ_{(p)}\to\mF_p\hookrightarrow\bk$.

\subsection{The Lie algebra $\fsl_2$}\label{SecSL2Lie} For $p=\charr\bk\neq2$, the Lie algebra $\fsl_2$ over $\bk$ has a basis $\{e,f,h\}$ with the following action on $V_n$:
\[e\cdot v_i=(n-i)v_{i+1},\quad h\cdot v_i=(2i-n)v_i,\quad f\cdot v_i=iv_{i-1}.\]
We have an isomorphism of $\SL_2$-modules $\iota:\fsl_2\to V_2$ given by
\[e\mapsto2v_2,\quad h\mapsto-4v_1,\quad f\mapsto-2v_0.\]
This isomorphism is such that, for $p\geq5$ and $1\leq n\leq p-3$, the following diagrams commute:
\[\begin{tikzcd}
\fsl_2\otimes\fsl_2 \arrow{r}{\text{Lie bracket}} \arrow[swap]{d}{\iota\otimes\iota} &[3em] \fsl_2 \arrow{d}{\iota}\\
V_2\otimes V_2 \arrow{r}{\proj^{2,2}_2} & V_2
\end{tikzcd}\qquad\begin{tikzcd}
\fsl_2\otimes V_n \arrow{r}{\text{Lie action}} \arrow[swap]{d}{\iota\otimes1} &[3em] V_n \arrow{d}{\id}\\
V_2\otimes V_n \arrow{r}{\frac{n+2}4\proj^{2,n}_n} & V_n
\end{tikzcd}\]
Consequently, throughout this paper we will often identify $\fsl_2$ with $V_2$ via $\iota$, and identify the action of $\fsl_2$ on $V_n$ with $\frac{n+2}4\proj^{2,n}_n$. Note that if $n=0$, or if $p>0$ and $n=p-2$, then $\proj^{2,n}_n$ is automatically zero by the $p$-triangle condition.

\subsection{Associativity and braiding} Now that we have fixed direct sum components of $V_a\otimes V_b$, we can express the associativity and braiding constraints in $\Rep\SL_2$ in terms of these components. If $(a,b,c)$ satisfies the $p$-triangle condition, then we have
\begin{equation}\label{EqnBraidComposition}
\proj_c^{b,a}\circ \braid_{V_a,V_b}\circ\incl_c^{a,b}=(-1)^{(a+b-c)/2}\id_{V_c}.
\end{equation}
Similarly, for $a,b,c,r,s,d\in\mN$ such that $(a,b,r),(c,d,r),(a,d,s),(b,c,s)$ all satisfy the $p$-triangle condition, we can define a scalar $\alpha^{a,b,r}_{c,d,s}$ such that
\begin{equation}\label{EqnAssocComposition}
\proj_d^{a,s}\circ(\id_{V_a}\otimes\proj_s^{b,c})\circ(\incl_r^{a,b}\otimes\id_{V_c})\circ\incl_d^{r,c}=\alpha^{a,b,r}_{c,d,s}\id_{V_d}.
\end{equation}
This composition of morphisms is illustrated by the following string diagram:
\begin{center}
\begin{tikzpicture}[yscale=0.6, xscale=1]
\draw (0,0) -- (0,1) node[left,pos=0.5] {$d$} -- (-1,2) node[left] {$r$} -- (-1,3) -- (-2,4) -- (-2,5) node[left,pos=0.5] {$a$} -- (-1,6);
\draw[->] (0,1) -- (1,2) -- (1,3) node[right,pos=0.5] {$c$} -- (0,4) -- (0,5) node[right] {$s$} -- (-1,6) -- (-1,7) node[right,pos=0.5] {$d$};
\draw (-1,3) -- (0,4) node[above left,pos=0.5] {$b$};
\end{tikzpicture}
\end{center}
If one of the triples above does not satisfy the $p$-triangle condition then we set $\alpha^{a,b,r}_{c,d,s}=0$. Using the results of \cite{sl2comb}, we have the following explicit formula for this scalar when the $p$-triangle condition is satisfied:
\begin{equation}\label{EqnAssociator}
\begin{split}
\alpha^{a,b,r}_{c,d,s}=(-1&)^{s+\frac{a-b-c+d}2}\frac{(s+1)!D(\frac a2,\frac b2,\frac r2)D(\frac c2,\frac d2,\frac r2)}{r!(\chi_1+1)!(\chi_2+1)!(\chi_3+1)!(\chi_4+1)!}\\
&\cdot\sum_{t=\chi^-}^{\chi^+}\frac{(-1)^t(t+1)!}{(t-\chi_1)!(t-\chi_2)!(t-\chi_3)!(t-\chi_4)!(\chi_5-t)!(\chi_6-t)!(\chi_7-t)!}
\end{split}
\end{equation}
where
\begin{align*}
\chi_1&=\frac{a+b+r}2,&\chi_2&=\frac{c+d+r}2,&\chi_3&=\frac{a+d+s}2,&\chi_4&=\frac{b+c+s}2,\\
\chi_5&=\frac{a+b+c+d}2,&\chi_6&=\frac{a+c+r+s}2,&\chi_7&=\frac{b+d+r+s}2,
\end{align*}
\[\chi^-=\max(\chi_1,\chi_2,\chi_3,\chi_4),\quad\chi^+=\min(\chi_5,\chi_6,\chi_7),\]
\[D(x,y,z)=(x+y+z+1)!(-x+y+z)!(x-y+z)!(x+y-z)!.\]
Once again, for $p>0$, Equation~(\ref{EqnAssociator}) is well-defined over $\mZ_{(p)}$ and hence gives a valid formula for $\alpha^{a,b,r}_{c,d,s}$ over $\bk$. However, note that Equation~(\ref{EqnAssociator}) only applies when all the triples above satisfy the $p$-triangle condition, and otherwise the right hand side may be non-zero in $\bk$ while $\alpha^{a,b,r}_{c,d,s}=0$.

When $\bk=\mC$, the scalar $\alpha^{a,b,r}_{c,d,s}$ is related to the well-known Racah-Wigner $6j$-symbol as follows:
\begin{equation}\label{Eqn6j}
\alpha^{a,b,r}_{c,d,s}=(-1)^{s+\frac{a-b-c+d}2}\frac{(s+1)!}{r!}\sqrt{\frac{D(\frac a2,\frac b2,\frac r2)D(\frac c2,\frac d2,\frac r2)}{D(\frac a2,\frac d2,\frac s2)D(\frac b2,\frac c2,\frac s2)}}\left\{\begin{matrix}\frac a2&\frac b2&\frac r2\\\frac c2&\frac d2&\frac s2\end{matrix}\right\}.
\end{equation}
The $6j$-symbol is invariant under permuting columns, and also under swapping the upper and lower entries in two columns simultaneously. From this we observe the relations
\begin{equation}\label{EqnPerms}
\begin{split}
\alpha^{a,b,r}_{c,d,s}&=\alpha^{b,a,r}_{d,c,s}=\alpha^{c,d,r}_{a,b,s}=\alpha^{d,c,r}_{b,a,s},\\
\alpha^{a,r,b}_{c,s,d}&=(-1)^{\frac{b+d-r-s}2}\frac{(d+1)!r!}{b!(s+1)!}\frac{D(\frac b2,\frac c2,\frac s2)}{D(\frac c2,\frac d2,\frac r2)}\alpha^{a,b,r}_{c,d,s}
\end{split}
\end{equation}
which hold over arbitrary $\bk$.

Note that we can compute the associator in the opposite direction (that is, bracketing on the right to bracketing on the left) by evaluating the composition
\[\begin{split}
V_a\otimes (V_b\otimes V_c)&\xrightarrow{\braid}(V_b\otimes V_c)\otimes V_a\xrightarrow{\braid}(V_c\otimes V_b)\otimes V_a\\
&\cong V_c\otimes(V_b\otimes V_a)\xrightarrow{\braid}V_c\otimes(V_a\otimes V_b)\xrightarrow{\braid}(V_a\otimes V_b)\otimes V_c
\end{split}\]
restricted to $V_d\hookrightarrow V_a\otimes V_s$ and $V_r\otimes V_c\twoheadrightarrow V_d$. This involves braidings of $V_k\subseteq V_i\otimes V_j$ for $(i,j,k)$ equal to $(a,s,d),(b,c,s),(b,a,r),(c,r,d)$. Since
\[\frac{a+s-d}2+\frac{b+c-s}2+\frac{b+a-r}2+\frac{c+r-d}2=(a+b+r)+(c-d-r)\]
is even when these triples satisfy the $p$-triangle condition, we have
\begin{equation}\label{EqnRightLeft}
\begin{split}
\proj_d^{r,c}&\circ(\proj_r^{a,b}\otimes\id_{V_c})\circ(\id_{V_a}\otimes\incl_s^{b,c})\circ\incl_d^{a,s}\\
&=\alpha^{c,b,s}_{a,d,r}\id_{V_d}=(-1)^{-b+d-r+s}\frac{r!(r+1)!}{s!(s+1)!}\frac{D(\frac a2,\frac d2,\frac s2)D(\frac b2,\frac c2,\frac s2)}{D(\frac a2,\frac b2,\frac r2)D(\frac c2,\frac d2,\frac r2)}\alpha^{a,b,r}_{c,d,s}\id_{V_d}
\end{split}
\end{equation}
using the $6j$-symbol description again.

\subsection{Rational functions}\label{SecRational} The number of terms in the sum in Equation~(\ref{EqnAssociator}) is $\chi^+-\chi^-+1$, unless this value is zero or negative in which case the associator is zero. Notice that if $a=b$ or $c=d$, then the $p$-triangle condition forces $r$ to be even, and $\chi^+-\chi^-\leq\frac r2$. We will use this to our advantage to obtain a more useful expression for $\alpha^{a,b,r}_{c,d,s}$ when $a=b=2x$, $c=d=2y$, $s=2z$, and $r=2u$ is small.

\begin{prop}\label{PropRational}
Let $p=\charr\bk\geq3$. If $u,x,y,z\in\{0,\dots,\frac{p-3}2\}$ are such that $|x-y|\leq z\leq x+y$ and $0\leq u\leq\min(2x,2y)$, then we have $\alpha^{2x,2x,2u}_{2y,2y,2z}=f_u(x,y,z)g(x,y,z)$
where
\[g(x,y,z)=\begin{cases}\frac{(-1)^{x+y+z}(2x)!(2y)!(2z)!}{(x+y+z+1)!(-x+y+z)!(x-y+z)!(x+y-z)!}&\text{ if }x+y+z\leq p-2\\0&\text{ if }x+y+z\geq p-1\end{cases},\]
\[\begin{split}f_u(x,y,z)=\frac{2z+1}{\binom{2u}{u}(2x)_u(2y)_u}\sum_{t=0}^u(-1)^t&\binom{u}{t}^2(x+y+z+1+t)_t\\
&\cdot(-x+y+z)_{u-t}(x-y+z)_{u-t}(x+y-z)_t\end{split}\]
and $(a)_b=a(a-1)(a-2)\cdots(a-b+1)$ for $a\in\mZ$ and $b\in\mN$ (with the convention $(a)_0=1$).
\end{prop}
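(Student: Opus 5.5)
Substitute $a=b=2x$, $c=d=2y$, $r=2u$, $s=2z$ into Equation~(\ref{EqnAssociator}) and reorganise it directly.

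\textbf{The $\chi_i$.} We get
\[\chi_1=2x+u,\quad \chi_2=2y+u,\quad \chi_3=\chi_4=x+y+z,\quad \chi_5=2x+2y,\quad \chi_6=\chi_7=x+y+z+u.\]

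\textbf{Vanishing case.} First settle when the $p$-triangle condition holds. The hypotheses on $x,y,z$ give the $0$-triangle condition for $(2x,2y,2z)$. Hence $\alpha=0$ exactly when $x+y+z\geq p-1$, up to the conditions on $(2x,2x,2u)$ and $(2y,2y,2u)$, which need $u\leq \min(2x,2y)$ and $2x+u\leq p-2$. This matches the zero case of $g$. I would check that when $2x+u>p-2$ but $x+y+z\leq p-2$, the rational expression $f_u g$ also vanishes, or else argue that the statement is only claimed in the range where $f_u$ makes sense. I expect the former: the factor $1/((2x)_u)$ together with the Pochhammers should take care of it, or the sum should collapse.

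\textbf{Reindexing the sum.} In the nonvanishing case, $\chi^-=x+y+z+\max(0,\,x-y-z+u,\,y-x-z+u)$ and $\chi^+=\min(2x+2y,\,x+y+z+u)$. Set $t'=t-(x+y+z)$. The sum then runs over $t'\in[0,u]$, with terms automatically zero outside the true range because of the factorials of negative integers. Each summand becomes
\[\frac{(-1)^{t}(x+y+z+1+t')!}{(t'+x+y+z-2x-u)!\,(t'+x+y+z-2y-u)!\,(t'!)^2\,(x+y-z-t')!\,((u-t')!)^2}.\]

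\textbf{Extracting the prefactor.} Pull out the $t'=0$-type factorials, namely $(x+y+z+1)!$, $(-x+y+z)!$, $(x-y+z)!$, $(x+y-z)!$. The ratios then become falling factorials:
\begin{itemize}
\item $(x+y+z+1+t')!/(x+y+z+1)!$ is a product that, combined, gives $(x+y+z+1+t')_{t'}$ up to shifts;
\item $(y-x+z)!/(y-x+z-u+t')!=(-x+y+z)_{u-t'}$, and similarly $(x-y+z)_{u-t'}$;
\item $(x+y-z)!/(x+y-z-t')!=(x+y-z)_{t'}$.
\end{itemize}
The factor $1/(t'!^2(u-t')!^2)$ becomes $\binom{u}{t'}^2/u!^2$.

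\textbf{Combining constants.} Next combine the prefactor
\[(s+1)!\,D(x,x,u)\,D(y,y,u)/\bigl(r!\,(\chi_1+1)!\cdots(\chi_4+1)!\bigr),\]
with $D(x,x,u)=(2x+u+1)!\,u!^2\,(2x-u)!$. After cancellation this should give
\[g\cdot(2z+1)\big/\bigl(\tbinom{2u}{u}(2x)_u(2y)_u\bigr),\]
using $(2x)!/(2x-u)!=(2x)_u$ and $(2u)!/u!^2=\binom{2u}{u}$. The factor $(2z+1)$ comes from $(2z+1)!/(2z)!$. Finally, check the sign $(-1)^{s+(a-b-c+d)/2}(-1)^{t}=(-1)^{x+y+z}(-1)^{t'}$.

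\textbf{Main obstacle.} The delicate step is the $\mZ_{(p)}$ bookkeeping. Equation~(\ref{EqnAssociator}) is exact over $\mZ_{(p)}$, while $f_u$ contains the denominator $\binom{2u}{u}(2x)_u(2y)_u$. So I will prove the identity as rational numbers over $\mQ$, then note that both sides lie in $\mZ_{(p)}$, since all factorials involved have arguments at most $p-2$ under the hypotheses, and reduce mod $p$. Care is needed where truncated ranges make some $(\cdot)_{u-t}$ vanish exactly where the original sum had no term.
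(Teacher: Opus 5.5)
Your reindexing $t\mapsto t+x+y+z$ and the constant bookkeeping are the same as in the paper, and they are correct. The overall constant multiplying the $f_u$-sum collapses to
$(-1)^{x+y+z}(2z+1)!\,u!^2(2x-u)!(2y-u)!/\bigl((2u)!(x+y+z+1)!(-x+y+z)!(x-y+z)!(x+y-z)!\bigr)$,
which is exactly the constant in $f_ug$. Summands with $t$ outside the true range vanish because $(-x+y+z)_{u-t}$, $(x-y+z)_{u-t}$ or $(x+y-z)_t$ contains the factor $0$.

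The gap is the case you leave open: $x+y+z\le p-2$ but $2x+u\ge p-1$ (or $2y+u\ge p-1$). There $\alpha^{2x,2x,2u}_{2y,2y,2z}=0$ by definition while $g\neq0$, so you must show $f_u(x,y,z)\equiv0\pmod p$.

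Neither of your suggested routes works. The statement does claim this range: $u\le\min(2x,2y)$ allows $2x+u$ well past $p-2$, and $f_u$ is well defined there, so the fallback is unavailable. The factor $1/(2x)_u$ is not the mechanism, since it is a unit mod $p$ because $2x\le p-3$. Your plan to prove the identity over $\mQ$ and reduce mod $p$ also cannot reach this case. Equation~(\ref{EqnAssociator}) does not compute $\alpha$ there, and the rational number $f_ug$ is genuinely non-zero: for $p=7$, $x=y=u=2$, $z=1$ one gets $f_2(2,2,1)=7/8$ over $\mQ$. So the vanishing is a purely mod-$p$ phenomenon.

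The missing argument is that every summand of $f_u$ vanishes mod $p$:
\begin{itemize}
\item If $t<u+x-y-z$, then $(-x+y+z)_{u-t}$ contains the factor $0$.
\item Otherwise $x+y+z+1+t\ge 2x+u+1\ge p$, while $x+y+z+2\le p$. So $(x+y+z+1+t)_t=(x+y+z+2)\cdots(x+y+z+1+t)$ contains the factor $p$.
\end{itemize}
The case $2y+u\ge p-1$ is symmetric, using $(x-y+z)_{u-t}$.

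This is how the paper handles it. A non-zero summand forces $x+y+z+1+t\le p-1$ and $\pm(x-y)+z-u+t\ge0$, and subtracting gives $2x+u\le p-2$ and $2y+u\le p-2$. With that in place, your computation correctly covers the remaining case, where all four triples satisfy the $p$-triangle condition.
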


\begin{proof}
We may assume $(2x,2y,2z)$ satisfies the $p$-triangle condition, that is $x+y+z\leq p-2$, since otherwise $g(x,y,z)=0=\alpha^{2x,2x,2u}_{2y,2y,2z}$. If a term in the sum defining $f_u(x,y,z)$ with parameter $t$ is non-zero, then
\[x+y+z+1+t\leq p-1,\quad -x+y+z-u+t\geq0,\quad x-y+z-u+t\geq0.\]
Subtracting the second and third equations from the first gives $u+2x,u+2y\leq p-2$. Consequently, we may also assume that $(2u,2x,2x)$ and $(2u,2y,2y)$ satisfy the $p$-triangle condition, since otherwise $f_u(x,y,z)=0=\alpha^{2x,2x,2u}_{2y,2y,2z}$. Hence we can use Equation~(\ref{EqnAssociator}), in which reparametrising $t$ to $t+x+y+z$ gives the required equality. Note that this would make the sum start at $t=\max(0,u+x-y-z,u-x+y-z)$ and end at $t=\min(x+y-z,u)$, but for $t$ outside this range the corresponding summand in $f_u(x,y,z)$ is automatically zero, so we may instead sum from $0$ to $u$.
\end{proof}

The expressions $f_u(x,y,z)$ and $g(x,y,z)$ have the following important properties:
\begin{enumerate}
\item $f_u(x,y,z)$ is invariant under swapping $x$ and $y$,
\item $g(x,y,z)$ is invariant under all permutations of $x,y,z$,
\item $g(x,y,z)=0$ if and only if $x+y+z\geq p-1$, and
\item for any fixed $u$, $f_u(x,y,z)$ is a rational function in $x,y,z$, and the denominator of this rational function is not divisible by $p$ for any $x,y$ with $\frac u2\leq x,y\leq\frac{p-3}2$.
\end{enumerate}
For example, $f_1(x,y,z)=(2z+1)(-x^2-x-y^2-y+z^2+z)/(4xy)$.

\section{The semisimple Verlinde category and Lie algebras}\label{SecLieAlgs}

In this section we recall the definitions of Lie algebras in symmetric tensor categories. We also introduce based Lie algebras, and obtain a characterisation of them in $\Ver_p$.

\subsection{Tensor categories and $\Ver_p$} We call a $\bk$-linear braided monoidal category $\cC$ a \textbf{symmetric tensor category} if $\cC$ is essentially small and abelian, every object in $\cC$ has finite length, the monoidal structure $\otimes$ is rigid, the unit object $\unit$ satisfies $\End(\unit)\cong\bk$, and the braiding satisfies $\braid_{Y,X}\circ\braid_{X,Y}=\id_{X\otimes Y}$ for all $X,Y\in\cC$. We call a braided monoidal functor between symmetric tensor categories a \textbf{symmetric tensor functor} if it is exact and faithful. We write $\Indd(\cC)$ for the ind-completion of $\cC$, which inherits a monoidal structure from $\cC$. In particular, we write $\Vecc_\bk$ and $\sVec_\bk$ for finite-dimensional vector spaces and super vector spaces respectively over $\bk$, and then $\Indd(\Vecc_\bk)$ and $\Indd(\sVec_\bk)$ are the categories of all vector spaces and super vector spaces.

Suppose $p=\charr\bk\geq2$. The semisimple Verlinde category $\Ver_p$ is the semisimplification of the category of tilting modules $\Tilt\SL_2$. The simple objects in $\Ver_p$ are $L_i$, the image of $V_i$, for $i\in\{0,\dots,p-2\}$ (note that some papers such as \cite{asymptotic} use the convention $L_i=V_{i-1}$). We abuse notation and write $\incl^{a,b}_c$ and $\proj^{a,b}_c$ for the images of these maps in $\Ver_p$ as well. Whenever $L_c,L_{c'}$ are subobjects of $L_a\otimes L_b$, we have $\proj_{c'}^{a,b}\circ\incl_c^{a,b}=\id_{L_c}$ for $c=c'$ and 0 otherwise, and hence these morphisms give a complete direct sum decomposition of $L_a\otimes L_b$. Since the functor $\Tilt\SL_2\to\Ver_p$ is braided monoidal, Equation~(\ref{EqnBraidComposition}) holds in $\Ver_p$ whenever $(a,b,c)$ satisfy the $p$-triangle condition, and Equations~(\ref{EqnAssocComposition}) and (\ref{EqnAssociator}) hold whenever $(a,b,r)$, $(c,d,r)$, $(a,d,s)$, $(b,c,s)$ satisfy the $p$-triangle condition.

For $p\geq5$, $\Ver_p$ has three proper tensor subcategories: $\Ver_p^+$, which has simples $L_{2n}$ for $n\in\{0,\dots,\frac{p-3}2\}$; a subcategory equivalent to $\sVec$, which has simples $\unit=L_0$ and $\bar\unit=L_{p-2}$; and a subcategory equivalent to $\Vecc$ with the simple $\unit$.

\subsection{Lie algebras and operadic Lie algebras}\label{SecLieDef} Following \cite{Et}, an \textbf{operadic Lie algebra} in a symmetric tensor category $\cD$ is an (ind-)object $\fg$ along with a morphism $[,]:\fg\otimes\fg\to\fg$ called the Lie bracket satisfying:
\begin{enumerate}
\item antisymmetry, meaning $[,]\circ\braid_{\fg,\fg}=-[,]$; and
\item the Jacobi identity, meaning $[[,],]\circ(\id_{\fg^{\otimes3}}+\sigma+\sigma^2)=0$ where $\sigma:\fg^{\otimes3}\to\fg^{\otimes 3}$ is a cyclic braiding, $\sigma^3=\id_{\fg^{\otimes3}}$. 
\end{enumerate}
A Lie algebra morphism $\fg\to\fg'$ is a morphism in $\Indd(\cD)$ which commutes with the bracket maps on $\fg$ and $\fg'$. Quotients of operadic Lie algebras $\fg\twoheadrightarrow\fg'$ are in correspondence with ideals of $\fg$, which are subobjects $I\subseteq\fg$ such that the image of the restriction of $[,]$ to $\fg\otimes I$ is contained in $I$. We say that $\fg$ is \textbf{simple} if its only ideals are $0$ and $\fg$, and $[,]$ is non-zero. We write $\fg_0$ for the underlying ordinary part of $\fg$, which is the Lie subalgebra spanned by subobjects isomorphic to $\unit$, and we call $\fg$ \textbf{invariantless} if $\fg_0=0$. A representation of $\fg$ is an object $X\in\cD$ with a Lie algebra morphism $\fg\to\fgl(X)=X\otimes X^*$; this gives a corresponding action map $\fg\otimes X\to X$, and a morphism of $\fg$-representations is a morphism in $\cD$ which commutes with the action maps. We write $\Rep_\cD(\fg)$ or just $\Rep(\fg)$ for the category of representations of $\fg$.

If $A$ is an associative algebra in $\Indd(\cD)$ with multiplication $\nabla:A\otimes A\to A$, then $A$ has a natural operadic Lie algebra structure with Lie bracket $\nabla\circ(1-\braid_{A,A})$. Operadic Lie algebras $\fg$ coming from associative algebras are more constrained than general operadic Lie algebras; for example over $\Vecc$ we have $[v,v]=0$ for $v\in\fg$ even though this is not guaranteed by antisymmetry in characteristic 2, and similarly over $\sVec$ we have $[[v,v],v]=0$ for odd $v$ even though operadic Lie algebras may not satisfy this in characteristic 3. In order to capture these constraints, we recall the following definition from \cite{Et}. The forgetful functor from operadic Lie algebras to $\Indd(\cD)$ has a left adjoint sending $X\in\Indd(\cD)$ to the free operadic Lie algebra ${\rm FOLie}(X)$, and we have a Lie algebra morphism from ${\rm FOLie}(X)$ to the tensor algebra $T(X)$ with kernel denoted $E(X)$. An operadic Lie algebra $\fg$ is called a \textbf{Lie algebra} if the adjunction counit map ${\rm FOLie}(\fg)\to\fg$ sends $E(\fg)$ to zero.

\begin{prop}\label{PropNoL1}
Let $p\geq5$ be a prime, and let $\fg$ be an operadic Lie algebra in $\Ver_p$.
\begin{enumerate}
\item $\fg$ is a Lie algebra if and only if it is a Lie subalgebra of some associative algebra.
\item If either no summand of $\fg$ is isomorphic to $L_1$, or no summand of $\fg$ is isomorphic to $\bar\unit$, then $\fg$ is a Lie algebra.
\end{enumerate}
\end{prop}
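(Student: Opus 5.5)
Part (1) follows from the universal properties of ${\rm FOLie}$ and $T$ together with a PBW theorem. Part (2) reduces to identifying generators of the ideal $E(X)$ and checking that they vanish under either hypothesis.

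For (1), suppose first that $\fg\subseteq A$ is a Lie subalgebra of an associative algebra $A$ in $\Indd(\Ver_p)$. The inclusion $\fg\to A$ extends to an algebra morphism $T(\fg)\to A$, which is in particular a Lie morphism. Both ways around the square
\[{\rm FOLie}(\fg)\to\fg\hookrightarrow A,\qquad {\rm FOLie}(\fg)\to T(\fg)\to A\]
are Lie morphisms that restrict to the inclusion on $\fg$. By the universal property of ${\rm FOLie}(\fg)$ they agree. The second composite kills $E(\fg)$ by definition of $E$, so the first does too. Since $\fg\hookrightarrow A$ is a monomorphism, the counit ${\rm FOLie}(\fg)\to\fg$ kills $E(\fg)$, i.e.\ $\fg$ is a Lie algebra.

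Conversely, if $\fg$ is a Lie algebra, I would take $A=U(\fg)=T(\fg)/(\text{relations } x\otimes y-\braid(x\otimes y)-[x,y])$ and show that $\fg\to U(\fg)$ is injective. I would deduce this from the PBW theorem for Lie algebras in $\Ver_p$ (in the sense of \cite{Et}), assuming that theorem is available for Lie algebras in this sense. Then $\fg$ is a Lie subalgebra of $U(\fg)$.

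For (2), the plan is to make $E(X)$ explicit in degree $n$. There ${\rm FOLie}_n(X)=(\mathrm{Lie}(n)\otimes X^{\otimes n})_{S_n}$ maps to $X^{\otimes n}$. In $\Ver_p$ the $S_n$-action on $X^{\otimes n}$ is determined by the semisimple decomposition, so this map is injective whenever $n<p$ and the relevant isotypic pieces are projective $\bk S_n$-modules. I would then use Etingof's description of $E(X)$ in $\Ver_p$ for $p\geq5$: it is generated as an ideal by the image of a single low-degree morphism. Its source is supported on the summand of $X^{\otimes 3}$ (or of a small tensor power) built from one copy of $L_1$ and one copy of $\bar\unit=L_{p-2}$. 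The point is that $L_1\otimes L_{p-2}$ and the braiding on such summands behave like the exceptional cases ``$[[v,v],v]$ for odd $v$ in characteristic 3'' and ``$[v,v]$ in characteristic 2''. This is because $L_{p-2}\otimes L_1\cong L_{p-3}$ behaves with a sign twist that makes a non-projective $S_n$-module appear.

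Granting this, if $X$ has no summand $L_1$ or no summand $\bar\unit$, the source of the generating morphism is zero. Hence $E(X)=0$, and every operadic Lie algebra on $X$ is a Lie algebra.

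The main obstacle is this identification of the generators of $E(X)$. It requires analysing which isotypic components of $X^{\otimes n}$ in $\Ver_p$ have non-projective $S_n$-action, and showing that all of them involve both $L_1$ and $\bar\unit$. I would try first to quote the relevant statement from \cite{Et} directly. Failing that, I would verify it by decomposing $X^{\otimes n}$ via the $p$-triangle condition and using that $\Ver_p$ is semisimple with $L_i\otimes\bar\unit\cong L_{p-2-i}$.
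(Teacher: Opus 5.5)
Part (1) is fine. Your universal-property argument for ``subalgebra of an associative algebra $\Rightarrow$ Lie algebra'' is a correct self-contained proof of what the paper cites as \cite[Proposition~4.7]{Et}. The other direction, via PBW and injectivity of $\fg\to U(\fg)$, is exactly the paper's argument.

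Part (2), however, misidentifies the obstruction. By your own remark, $\mathrm{FOLie}_n(X)\to X^{\otimes n}$ is injective for $n<p$, since $\bk S_n$ is semisimple there. So for $p\geq5$ nothing in $E(X)$ lives in degree $3$ or any other small degree. The obstruction sits in degree $p$, just as $[v,v]$ and $[[v,v],v]$ in your analogies sit in degrees $2$ and $3$ in characteristics $2$ and $3$.

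The criterion the paper uses, from \cite[Theorem~6.6]{Et}, is this: $\fg$ is a Lie algebra if and only if the additive map $\gamma_p:\Hom(L_1,\fg)\to\Hom(\bar\unit,\fg)$ vanishes. Here $\gamma_p(f)$ is the composite $\bar\unit\hookrightarrow L_1^{\otimes p}\xrightarrow{f^{\otimes p}}\fg^{\otimes p}\to\fg$, with the last arrow an iterated bracket. Its input is $p$ copies of $L_1$ and no copy of $\bar\unit$. The object $\bar\unit$ appears only as the type of the output, and $L_{p-3}\cong L_1\otimes\bar\unit$ plays no role.

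This also breaks your final step. The two hypotheses act asymmetrically: ``no summand $L_1$'' kills the domain of $\gamma_p$, while ``no summand $\bar\unit$'' kills its codomain. In the second case you cannot conclude $E(\fg)=0$. If $L_1$ is a summand of $\fg$, then $E(L_1)$ is a retract of $E(\fg)$ by functoriality, and $E(L_1)$ is non-zero in degree $p$; that degree-$p$ piece is exactly what produces $\gamma_p$.

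What holds instead is that the counit $\mathrm{FOLie}(\fg)\to\fg$ kills $E(\fg)$. By Etingof's theorem this is detected by $\gamma_p$, which vanishes because $\Hom(\bar\unit,\fg)=0$. With the criterion stated in this form, (2) follows at once from semisimplicity of $\Ver_p$, and this is the paper's proof.
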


\begin{proof}
For (1), one can define the universal enveloping algebra $U(\fg)$ of an operadic Lie algebra $\fg$, and \cite[Theorem~6.6]{Et} shows that if $\fg$ is a Lie algebra then it is PBW, which implies that the map $\fg\to U(\fg)$ is injective. The converse is \cite[Proposition~4.7]{Et}.

For (2), for an operadic Lie algebra $\fg$ in $\Ver_p$ one can construct an additive map $\gamma_p:\Hom_{\Ver_p}(L_1,\fg)\to\Hom_{\Ver_p}(\bar\unit,\fg)$ sending $f:L_1\to\fg$ to a composition $\bar\unit\hookrightarrow L_1^{\otimes p}\to\fg^{\otimes p}\to\fg$, where the last map is a sequence of Lie brackets. By \cite[Theorem~6.6]{Et}, $\fg$ is a Lie algebra if and only if $\gamma_p=0$.
\end{proof}

For an object $X\in\cD$, $X\otimes X^*$ has an associative algebra structure with multiplication $\id_X\otimes\ev_X\otimes\id_{X^*}$, and we write $\fgl(X)$ for the associated Lie algebra structure. Following \cite{Ve2}, we also write $\fsl(X)$ for the Lie subalgebra of $\fgl(X)$ given by the kernel of the map $\ev_X\circ\braid_{X,X^*}$.

Since $\sVec$ is a subcategory of $\Ver_p$ for $p\geq3$, we give the following elementary lemma about ordinary Lie superalgebras, or in other words Lie algebras in $\sVec$, which is an edge case of Theorem~\ref{ThmLength3}.

\begin{lemma}\label{LemLieSuper}
Let $\bk$ be algebraically closed, and let $\charr\bk$ be either zero or $p\geq5$. If $\fg$ is a Lie superalgebra over $\bk$ with graded dimension either $(n|1)$ or $(1|n)$ for some integer $n\geq1$, then $\fg$ is non-simple. The only simple (non-abelian) Lie superalgebra with total dimension at most 3 is $\fsl_2$.
\end{lemma}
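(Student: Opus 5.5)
We argue directly with the $\mZ/2$-grading $\fg=\fg_{\bar0}\oplus\fg_{\bar1}$ and the supersymmetric bracket, treating $(n|1)$ and $(1|n)$ separately. In each case we exhibit a proper nonzero ideal, or show that the bracket vanishes. A simple Lie superalgebra must have $[\fg,\fg]=\fg$ and must have no nonzero proper graded ideals. We also use that $\fg_{\bar1}$ is a $\fg_{\bar0}$-module, $[\fg_{\bar1},\fg_{\bar1}]\subseteq\fg_{\bar0}$, and the Jacobi identity. Since $p\geq5$, the identity $[[v,v],v]=0$ for odd $v$ holds; this is part of being a Lie algebra, as opposed to merely an operadic Lie algebra in the sense of Section~\ref{SecLieDef}.

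\emph{Case $(1|n)$.} Write $\fg_{\bar0}=\bk h$. Then $\operatorname{ad}h$ acts on $\fg_{\bar1}$. The space $\fg_{\bar0}$ is an ideal unless $[h,\fg_{\bar1}]\neq0$. Any subspace $W\subseteq\fg_{\bar1}$ stable under $\operatorname{ad}h$ with $[W,W]=0$ in $\fg_{\bar0}$ gives an ideal $W\oplus[\fg_{\bar1},W]$. To get a contradiction, take an eigenvector $w$ of $\operatorname{ad}h$ with eigenvalue $\lambda$, which exists since $\bk$ is algebraically closed. For eigenvectors $w,w'$ with eigenvalues $\lambda,\lambda'$ we have $[w,w']\in\fg_{\bar0}=\bk h$, and $\operatorname{ad}h$ acts on $\bk h$ by $0$, so $[w,w']=0$ unless $\lambda+\lambda'=0$. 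Now consider generalized eigenspaces. If $\lambda\neq0$, the eigenspace $E_\lambda$ satisfies $[E_\lambda,E_\lambda]=0$, so $I=E_\lambda+[\fg_{\bar1},E_\lambda]+\dots$ is the ideal generated by $E_\lambda$. This ideal lies in $E_\lambda\oplus\bk h\oplus E_{-\lambda}$. The brackets $[h,E_{-\lambda}]$ recover all of $E_{-\lambda}$ only if $h\in I$, and then $I\supseteq[h,\fg_{\bar1}]$. A direct simplicity argument then says $\fg_{\bar1}=E_\lambda\oplus E_{-\lambda}$ and $[E_\lambda,E_{-\lambda}]=\bk h$. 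Jacobi applied to $[h,[w,w']]=0$ must then be checked against $[[w,w'],w'']$. Take $w,w''\in E_\lambda$ and $w'\in E_{-\lambda}$ with $[w,w']=h$. Jacobi gives $[h,w'']=\pm[w,[w',w'']]\pm\dots$. The left side is $\lambda w''$, and the right side is a combination of $[w,c h]$ terms equal to $\lambda c\,w$. Hence $\dim E_\lambda=1$, and likewise $\dim E_{-\lambda}=1$, so the dimension is $(1|2)$. That algebra is $\fosp(1|2)$-like but lacks $e,f$. A final Jacobi check on $[[w,w'],w']=0$, which is the $[[v,v],v]$ relation for $v=w+w'$, forces $\lambda=0$, giving a contradiction. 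If all eigenvalues are $0$, then $\operatorname{ad}h$ is nilpotent on $\fg_{\bar1}$. By Engel, $\fg$ is then nilpotent, hence not simple.

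\emph{Case $(n|1)$.} Write $\fg_{\bar1}=\bk v$. Then $[v,v]=:x\in\fg_{\bar0}$, and $[x,v]=[[v,v],v]=0$. The space $\fg_{\bar0}$ acts on $\bk v$ by a character $\mu$, so $I=\bk v+\bk x$ is an ideal as soon as $[\fg_{\bar0},x]\subseteq\bk x$. Jacobi gives $[y,[v,v]]=2[[y,v],v]=2\mu(y)x$, so $I$ is an ideal. Since $I\neq\fg$ whenever $n\geq2$, simplicity forces $n=1$. In that case $\fg=I$ with $[x,v]=0$, so $[\fg,\fg]=\bk x\neq\fg$, which is again not simple.

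For the last claim, total dimension at most $3$ with nonzero odd part means $(1|1)$, $(2|1)$, $(1|2)$, or $(0|n)$. The cases $(1|1)$, $(2|1)$ and $(1|2)$ are excluded above. In the purely odd case $(0|n)$ the bracket lands in $\fg_{\bar0}=0$, so the bracket vanishes. What remains are ordinary Lie algebras of dimension at most $3$. By the classical classification in characteristic $0$ or $p\geq5$, the only simple one is $\fsl_2$.

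The main obstacle is the $(1|n)$ case, where the eigenvalue bookkeeping must be done carefully, and in particular the $\lambda\neq0$ subcase with $\dim=(1|2)$.
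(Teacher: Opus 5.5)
The $(n|1)$ case and the final dimension count are fine. You use the ideal $\bk v+\bk[v,v]$ and then treat $n=1$ separately. There you should also note the subcase $[v,v]=0$, in which $\bk v$ is already a proper ideal. The paper takes $\bk v$ if $[v,v]=0$ and $\bk[v,v]$ otherwise, which handles all $n\geq1$ at once.

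The $(1|n)$ case, however, has a genuine gap: as written it is not a proof.
\begin{itemize}
\item The claim that $W\oplus[\fg_{\bar1},W]$ is an ideal is unjustified. If $h\in[\fg_{\bar1},W]$, the ideal generated by $W$ also contains $[h,\fg_{\bar1}]$, and nothing you have said at that point controls this.
\item The ``direct simplicity argument'' giving $\fg_{\bar1}=E_\lambda\oplus E_{-\lambda}$ is never supplied. A priori $\ad h$ could have several pairs of nonzero eigenvalues, a nonzero generalized $0$-eigenspace alongside them, or Jordan blocks. Knowing that the eigenspaces $E_{\pm\lambda}$ are $1$-dimensional does not give dimension $(1|2)$.
\item Your final contradiction needs an actual eigenvector $w'\in E_{-\lambda}$ with $[w,w']=h$, and its existence is not shown. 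Invariance of the odd bracket only says that $w$ pairs trivially with the image of $\ad h+\lambda$. So the partners of $w$ in the generalized $(-\lambda)$-eigenspace need not be eigenvectors.
\item In the nilpotent branch, Engel's theorem requires every homogeneous element to be ad-nilpotent, not just $h$. For odd $w$ this follows from $(\ad w)^2=\tfrac12\ad[w,w]$, but you need to say so.
\end{itemize}

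The repair is short and uses the identity you reach for at the end, applied to $w$ alone. For odd $a,b$ the Jacobi identity gives $2[[a,b],b]=[a,[b,b]]$. Let $w$ be an eigenvector of $\ad h$ with eigenvalue $\lambda\neq0$, so $[w,w]=0$. For every odd $w'$, write $[w',w]=ch$; then $2c\lambda w=2[[w',w],w]=[w',[w,w]]=0$. Hence $[\fg_{\bar1},w]=0$, and $\bk w$ is a proper nonzero ideal. This disposes of all nonzero eigenvalues at once, and the nilpotent case then follows from the corrected Engel step.

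The paper avoids eigenvalues altogether. It takes the radical $R$ of the form $\fg_{\bar1}\otimes\fg_{\bar1}\to\bk h$. If $R\neq0$, Jacobi gives $[h,R]\subseteq R$, so $R$ is an ideal. If $R=0$, the non-isotropic vectors span $\fg_{\bar1}$, and $[[v,v],v]=0$ forces $[h,v]=0$ on each of them, so $\bk h$ is an ideal.
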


\begin{proof}
If $\fg$ has dimension $(n|1)$, then let $v\in\fg$ span the odd part. If $[v,v]=0$ then $v$ spans an ideal. Otherwise $[v,v]$ spans an ideal, since the Jacobi identity shows that $[[v,v],v]=0$ and $[[v,v],u]$ is a multiple of $[v,v]$ for all even $u\in\fg$.

If $\fg$ has dimension $(1|n)$, then let $u\in\fg$ span the even part and write $V\subset\fg$ for the odd part. Let $R=\{r\in V\mid [r,v]=0\text{ for all }v\in V\}$ be the radical of the bracket map $V\otimes V\to\bk$. If $R\neq0$, then $[[u,r],v]=0$ for $r\in R$ and $v\in V$ by the Jacobi identity, so $[u,r]\in R$. Thus $R$ is an ideal and $\fg$ is non-simple. If instead $R=0$, then since $[[v,v],v]=0$ by the Jacobi identity, the map $[u,\cdot]:V\to V$ is zero on vectors $v\in V$ with $[v,v]\neq0$. But such vectors span $V$ (e.g. see \cite[Theorem~6.9]{Jac}), so this map is zero and $u$ spans an ideal.

For Lie superalgebras up to dimension 3, the only cases not covered above are purely odd (and thus abelian) or purely even, for which the result follows from classical Lie theory for characteristic zero and \cite{PS} for positive characteristic.
\end{proof}

\subsection{Harish-Chandra pairs and restricted Lie algebras} An \textbf{affine group scheme} $G$ in a symmetric tensor category $\cD$ is a representable functor from the category of commutative algebras in $\Indd(\cD)$ to the category of groups. The functor $G$ is represented by a Hopf algebra $\cO(G)$ in $\Indd(\cD)$, and a representation of $G$ is an object $X\in\cD$ with a coaction of $\cO(G)$. We write $\Lie(G)$ for the Lie algebra in $\Indd(\cD)$ corresponding to $G$; see \cite{BP} for a number of equivalent formalisations of this.

Recall from \cite{Ve1} the notion of a \textbf{Harish-Chandra pair} in $\Ver_p$, which is a pair $(G_0,\fg)$ of a finite-type affine group scheme $G_0$ over $\Vecc$ and a finite-length Lie algebra $\fg$ in $\Ver_p$, along with an action of $G_0$ on $\fg$ and an isomorphism $i:\Lie(G_0)\cong\fg_0$, such that $i$ and the bracket on $\fg$ are $G_0$-module morphisms and the actions of $\fg_0$ and $\Lie(G_0)$ on $\fg$ are identified by $i$. By the results of \cite{Ve1}, we have a category equivalence between finite-type affine group schemes over $\Ver_p$ and Harish-Chandra pairs. Moreover, by the results of \cite{Ve2}, an action of $G$ on an object $X=\bigoplus_i L_i^{\oplus n_i}\in\Ver_p$ corresponds to a group morphism $G\to\GL(X)$ where $\GL(X)$ is an affine group scheme with Harish-Chandra pair $(\prod_i\GL_{n_i},\fgl(X))$.

Following \cite{BP}, a \textbf{restricted Lie algebra} structure on an (ind-)object $\fg$ in a symmetric tensor category $\cD$ is given by a collection of maps $\phi_n:(\mathcal Lie_n\otimes\fg^{\otimes n})^{S_n}\to\fg$, where $\mathcal Lie_n$ is the degree $n$ part of the Lie operad, the symmetric group $S_n$ acts simultaneously by permutations on the monomial basis of $\mathcal Lie_n$ and by braidings on $\fg^{\otimes n}$, and $(-)^{S_n}$ refers to invariants of $S_n$. These maps satisfy conditions coming from the operad structure on $\mathcal Lie$. If $\cD=\Vecc_\bk$ with $\charr\bk=p$, then a restricted Lie algebra is exactly a Lie algebra along with a ``$p$-th power map'' $v\mapsto v^{[p]}$ satisfying certain conditions. Not all Lie algebras have such a map, and this map is not uniquely determined by the Lie algebra structure (see for instance \cite{Bo}).

\begin{prop}[Theorem~12.6 in \cite{BP}]\label{PropLieRes}
If $G$ is an affine group scheme in $\cD$, then $\Lie(G)$ has a natural restricted Lie algebra structure.
\end{prop}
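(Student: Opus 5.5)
The statement is Proposition~\ref{PropLieRes}, quoted from \cite[Theorem~12.6]{BP}. The plan is to construct the maps $\phi_n$ directly from the Hopf algebra $\cO(G)$. We realise $\Lie(G)$ as the space of primitive elements in the distribution algebra. Dually, set $\fg^*=I/I^2$, where $I\subset\cO(G)$ is the augmentation ideal, and let $\mathrm{Dist}(G)=\bigcup_n(\cO(G)/I^{n+1})^*$ in $\Indd(\cD)$. Then $\mathrm{Dist}(G)$ is a cocommutative Hopf algebra in $\Indd(\cD)$, and $\Lie(G)$ is its primitive part $\mathrm{Prim}(\mathrm{Dist}(G))$. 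Because $\mathrm{Dist}(G)$ is associative, the commutator $\nabla\circ(1-\braid)$ restricts to a Lie bracket on the primitives. By Proposition~\ref{PropNoL1}(1), or its analogue over general $\cD$, this is a genuine Lie algebra.

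The key step is to produce the operations $\phi_n$. Any associative algebra $A$ in $\Indd(\cD)$ carries a natural action of the Lie operad. Evaluating a Lie word $w\in\mathcal Lie_n$ by iterated commutators gives $\mathcal Lie_n\otimes A^{\otimes n}\to A$. The point of restricting to $S_n$-invariants $(\mathcal Lie_n\otimes A^{\otimes n})^{S_n}$ is that these are the ``divided power'' inputs. The natural map $(\mathcal Lie_n\otimes A^{\otimes n})_{S_n}\to(\mathcal Lie_n\otimes A^{\otimes n})^{S_n}$ is the norm, and the invariants can be strictly larger in characteristic $p$; for instance $v^{\otimes p}$ in ordinary vector spaces gives the $p$-th power map. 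We therefore define $\phi_n$ on invariants by composing the inclusion of invariants into $\mathcal Lie_n\otimes A^{\otimes n}$ with the associative evaluation, which sends a Lie word to its expansion in $T_n\otimes A^{\otimes n}\to A$. For this we use the operad map $\mathcal Lie\to\mathcal Ass$ and the fact that $\mathcal Ass_n$ is a free $S_n$-module. An $S_n$-invariant element of $\mathcal Ass_n\otimes A^{\otimes n}$ is the norm of a unique element of $A^{\otimes n}$, namely its coefficient on the identity permutation. This gives a canonical map $(\mathcal Ass_n\otimes A^{\otimes n})^{S_n}\cong A^{\otimes n}\xrightarrow{\nabla}A$. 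We precompose with the map induced by $\mathcal Lie_n\to\mathcal Ass_n$.

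We then check two things. First, restricted to primitives $\fg\subseteq A$, the image lands in primitives. This reduces to showing that the coproduct of the evaluated element is $x\otimes1+1\otimes x$. The coproduct is an algebra map $A\to A\otimes A$, and the Lie operad image consists of ``primitive'' elements of $\mathcal Ass$, which are Lie elements in the free associative algebra. Hence the cross terms cancel at the level of the operad, before taking invariants, and this is compatible with the norm identification. Second, the $\phi_n$ satisfy the axioms of an algebra over the monad $\Gamma(\mathcal Lie,-)$ of divided-power operations. These follow because the associative evaluation is an algebra over $\Gamma(\mathcal Ass,-)=T$, since $\mathcal Ass$ is $S_n$-free. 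Naturality in $G$ holds because everything is built functorially from the Hopf algebra $\mathrm{Dist}(G)$.

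I expect the main obstacle to be the primitivity check in the invariant setting. Cancellation of cross terms for Lie elements holds in $\mathcal Ass_n\otimes A^{\otimes n}$, but we must make sure that the identification of invariants with $A^{\otimes n}$ commutes with the coproduct. Equivalently, the coproduct of an invariant may split across $(\mathcal Ass_k\otimes A^{\otimes k})^{S_k}\otimes(\mathcal Ass_{n-k}\otimes A^{\otimes n-k})^{S_{n-k}}$, and the shuffle coefficients could vanish mod $p$ in an uncontrolled way. I would first handle this using the freeness of the relevant $S_k\times S_{n-k}$-modules, which makes invariants equal to norms. Only if that fails would I fall back on the explicit Hopf-algebraic argument of \cite{BP}.
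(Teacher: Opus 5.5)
The paper does not prove this statement. It is quoted from \cite[Theorem~12.6]{BP}, and the paper applies it only to finite-type groups (Corollary~\ref{CorLieRes}). So your proposal is a self-contained reconstruction rather than a variant of an argument in the paper.

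\textbf{Comparison.} The citation gives full generality and matches the definitions of $\Lie(G)$ and of restricted structures used in \cite{BP}. Your route gives an explicit formula for the operations: the divided-power evaluation in $\mathrm{Dist}(G)$, which recovers $x\mapsto x^p$ in $\Vecc$ and $x\mapsto x^2$ for odd $x$ in $\sVec$. It also makes visible that $\Lie(G)$ is restricted because it is the primitive part of a Hopf algebra.

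\textbf{The step you flag closes cleanly.} Your strategy is sound, and the primitivity check produces no binomial coefficients.
\begin{enumerate}
\item Write an invariant as $\xi=\sum_{\sigma\in S_n}\sigma\otimes\sigma\cdot y$ with $y\in\fg^{\otimes n}$, so that $\phi_n(\xi)=\nabla(y)$. This is the identity-coefficient map, not the plain evaluation, which would give $n!\,\nabla(y)$.
\item Let $D\colon\mathcal Ass_n\to\mathcal Ass_k\otimes\mathcal Ass_{n-k}$ send a word to its restrictions to the letters $x_1,\dots,x_k$ and to $x_{k+1},\dots,x_n$. Lie polynomials are primitive already over $\mZ$, so $D$ vanishes on $\mathcal Lie_n$ for $0<k<n$. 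Hence $(D\otimes\id)(\xi)=0$ before any invariants are taken.
\item The permutations $\sigma$ with $D(\sigma)=e\otimes e$ are exactly those for which $\sigma^{-1}$ is a $(k,n-k)$-shuffle. So the $e\otimes e$ component of $(D\otimes\id)(\xi)$ is $\sum_{\pi\in\mathrm{Sh}(k,n-k)}\pi^{-1}\cdot y$.
\item That tensor is precisely the one whose image under $\nabla\otimes\nabla$ is the $(k,n-k)$ cross term of $\Delta(\nabla(y))$. Hence $\phi_n(\xi)$ is primitive. As a check, for $\xi=(\sum_\sigma\sigma)\otimes v^{\otimes p}$ in $\Vecc$ both sides equal $\binom pk v^{\otimes p}=0$.
\end{enumerate}
The monad-algebra axioms on $\fg$ then follow from those on $\mathrm{Dist}(G)$, because $\Gamma(\mathcal Lie,-)$ preserves monomorphisms.

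\textbf{Scope limitation.} Making $\mathrm{Dist}(G)=\bigcup_n(\cO(G)/I^{n+1})^*$ a Hopf algebra in $\Indd(\cD)$ uses $(X\otimes Y)^*\cong X^*\otimes Y^*$ for the pieces $\cO(G)/I^{n+1}$. That requires them to have finite length. This holds for $G$ of finite type, which covers every use in the paper, but not for an arbitrary affine group scheme as in the statement. For general $G$, work instead in the convolution algebra $\underline{\Hom}(\cO(G),\unit)$, using only the lax maps $X^*\otimes Y^*\to(X\otimes Y)^*$. Define $\Lie(G)$ there by the condition $\delta\circ m=\delta\otimes\epsilon+\epsilon\otimes\delta$; the cancellation argument above then goes through unchanged.
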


\begin{corollary}\label{CorLieRes}
If $\fg$ is an invariantless Lie algebra in $\Ver_p$ with $p\geq5$, then $(1,\fg)$ is a Harish-Chandra pair, and the corresponding affine group scheme $G$ gives $\fg\cong\Lie(G)$ a natural restricted Lie algebra structure.
\end{corollary}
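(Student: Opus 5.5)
The plan is to apply the equivalence of categories from \cite{Ve1} between finite-type affine group schemes in $\Ver_p$ and Harish-Chandra pairs, and then read off the restricted structure from Proposition~\ref{PropLieRes}. Corollary~\ref{CorLieRes} thus has two parts. First, $(1,\fg)$ satisfies the axioms of a Harish-Chandra pair. Second, the group scheme $G$ attached to it has $\Lie(G)\cong\fg$. Once both are in place, Proposition~\ref{PropLieRes} equips $\Lie(G)$, and hence $\fg$ via the isomorphism, with a natural restricted Lie algebra structure. Here "natural" means the structure is functorial in $G$, and so in morphisms of invariantless Lie algebras through the equivalence.

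For the first part, take $G_0=1$, the trivial affine group scheme over $\Vecc$, so that $\Lie(G_0)=0$. Since $\fg$ is invariantless, $\fg_0=0$, and the isomorphism $i:\Lie(G_0)\cong\fg_0$ is simply $0\cong0$. The trivial group acts trivially on $\fg$. With this action, $i$ and the bracket are automatically $G_0$-module morphisms, since every morphism is equivariant for the trivial group. The compatibility of the actions of $\fg_0$ and $\Lie(G_0)$ on $\fg$ is vacuous because both are zero. We also need $\fg$ to be a finite-length Lie algebra (not merely an operadic one) in $\Ver_p$. The statement assumes this, and in any case Proposition~\ref{PropNoL1} lets us check it whenever $\fg$ avoids $L_1$ or $\bar\unit$. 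So $(1,\fg)$ is a Harish-Chandra pair in the sense of \cite{Ve1}.

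For the second part, let $G$ be the finite-type affine group scheme corresponding to $(1,\fg)$ under the equivalence of \cite{Ve1}. The equivalence is built so that the Lie algebra of the group attached to $(G_0,\fg)$ recovers $\fg$. Concretely, $\cO(G)$ is constructed from $\cO(G_0)$ and the dual of $U(\fg)$ relative to $\fg_0$, and $\Lie(G)$ is identified with $\fg$ compatibly with brackets. This gives $\fg\cong\Lie(G)$ as Lie algebras in $\Ver_p$. Applying Proposition~\ref{PropLieRes} to $G$ then completes the argument.

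The main obstacle is this second step: confirming that the identification $\Lie(G)\cong\fg$ in \cite{Ve1} matches the notion of $\Lie(G)$ used in \cite{BP}, for which Proposition~\ref{PropLieRes} is stated. My first approach is to use the equivalent formalisations of $\Lie(G)$ in \cite{BP}, for instance as primitive elements or as the dual of $\mathfrak m/\mathfrak m^2$ for the augmentation ideal $\mathfrak m\subseteq\cO(G)$. I would then check that the \cite{Ve1} construction identifies this object with $\fg$, bracket included. In the invariantless case this should be straightforward, because $G$ is then infinitesimal-like and $\cO(G)$ is essentially the restricted dual of $U(\fg)$.
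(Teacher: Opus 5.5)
Your proposal is correct and follows the same route as the paper. Since $\fg_0=0$, the pair $(1,\fg)$ trivially satisfies the Harish-Chandra pair axioms, the equivalence of \cite{Ve1} produces $G$ with $\Lie(G)\cong\fg$, and Proposition~\ref{PropLieRes} supplies the restricted structure. The paper takes for granted the compatibility of the two notions of $\Lie(G)$ that you flag as the main obstacle, treating it as built into the Harish-Chandra equivalence and citing \cite{BP} for the equivalent formalisations, so your extra check is a sensible safeguard rather than a missing step.
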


Because of this corollary, and also Proposition~\ref{PropNoL1}, almost all operadic Lie algebras we will consider in this paper are actually restricted Lie algebras.

\subsection{$\cD$-groups and based Lie algebras} Let $\cD$ be a symmetric tensor category, and recall the following notions from \cite{incompressible}. The fundamental group $\pi(\cD)=\underline{\Aut}^\otimes(\Id_\cD)$ is an affine group scheme over $\cD$ which has a canonical action on all objects of $\cD$. A \textbf{$\cD$-group} is defined as a pair $(G,\phi)$ of an affine group scheme $G$ in $\cD$ and a morphism $\phi:\pi(\cD)\to G$ such that the conjugation action of $\pi(\cD)$ on $G$ induces the canonical action of $\pi(\cD)$ on $\cO(G)$ as an ind-object in $\cD$. By \cite[Theorem~4.2.1]{incompressible} we have a contravariant equivalence between $\cD$-groups and pairs $(\cC,F)$ of a symmetric tensor category $\cC$ and symmetric tensor functor $F:\cC\to\cD$. This equivalence sends $(G,\phi)$ to $\Rep(G,\phi)$, the category of representations of $G$ for which the action of $\pi(\cD)$ via $\phi$ is the canonical action.

Write $\fg_\pi=\Lie(\pi(\cD))$. The canonical action of $\pi(\cD)$ induces an action of $\fg_\pi$ on objects in $\cD$. We define a \textbf{based Lie algebra} in $\cD$ to be a pair $(\fg,\phi)$ of a Lie algebra $\fg\in\Indd(\cD)$ and a Lie algebra morphism $\phi:\fg_\pi\to\fg$ such that the adjoint action of $\fg_\pi$ on $\fg$ via $\phi$ is the canonical action on the underlying object of $\fg$ (\cite{EKO} uses the term ``anchored'' for the same concept in the case of reduced KKT Lie algebras, but we use a different term to avoid connotations with anchored vector bundles and other related objects). We write $\Rep(\fg,\phi)$ for the category of representations of $\fg$ for which the action of $\fg_\pi$ via $\phi$ is the canonical action. If $(G,\phi)$ is a $\cD$-group then $(\fg,\Lie(\phi))$ is a based Lie algebra, and we have a restriction functor $\Rep(G,\phi)\to\Rep(\fg,\Lie(\phi))$.

Proposition~\ref{PropCanAction} below gives a characterisation of based Lie algebras in $\Ver_p$ for $p\geq5$. First, recall from \cite[Theorem~4.9]{Ve2} that $\pi(\Ver_p^+)$ has Harish-Chandra pair $(1,\fsl(L_1))$. Then the Harish-Chandra pair for $\pi(\Ver_p)$ is $(\mZ/2,\fsl(L_1))$ with the trivial action of $\mZ/2$ on $\fsl(L_1)$, since $\Ver_p\simeq\Ver_p^+\boxtimes\sVec$ and $\pi(\sVec)\cong\mZ/2$. Now, since $\fsl(L_1)$ is the image in $\Ver_p$ of $\fsl(V_1)$ in $\Rep\SL_2$, which in turn is isomorphic to $\fsl_2$, the isomorphism $\iota$ in Section~\ref{SecSL2Lie} shows that $\fsl(L_1)$ is isomorphic to $L_2$ with the Lie bracket $\proj^{2,2}_2$.

\begin{lemma}\label{LemUniqueAction}
Let $p\geq5$ be a prime, and let $\fg=L_2$ be the Lie algebra with bracket $\proj^{2,2}_2$. For $n\in\{0,\dots,p-2\}$, a morphism $f:L_2\otimes L_n\to L_n$ is an action of $\fg$ on $L_n$ if and only if either $f=\frac{n+2}4\proj^{2,n}_n$ or $f=0$. Note that $\proj^{2,n}_n$ is also zero for $n\in\{0,p-2\}$.
\end{lemma}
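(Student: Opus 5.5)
Since $L_n$ is simple and $\Hom(L_2\otimes L_n,L_n)$ is at most one-dimensional, spanned by $\proj^{2,n}_n$ (multiplicity one, by the $p$-triangle condition), every morphism has the form $f=\lambda\proj^{2,n}_n$ for some $\lambda\in\bk$. The plan is to show that the representation axiom forces $\lambda(\lambda-\frac{n+2}4)=0$. When $n\in\{0,p-2\}$ the triple $(2,n,n)$ fails the $p$-triangle condition: for $n=0$ the $0$-triangle condition fails, and for $n=p-2$ we have $n>p-2-1$ and $2+2n>2p-4$. Then $\proj^{2,n}_n=0$ and there is nothing to prove. So I assume $1\le n\le p-3$.

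\textbf{Step 1: the condition.} The map $f$ is an action exactly when
\[
f\circ(\id_{L_2}\otimes f)-f\circ(\id_{L_2}\otimes f)\circ(\braid_{L_2,L_2}\otimes\id_{L_n})=f\circ([,]\otimes\id_{L_n})
\]
holds as maps $(L_2\otimes L_2)\otimes L_n\to L_n$, up to the associator, which I suppress. Both sides are morphisms $L_2\otimes L_2\otimes L_n\to L_n$. I decompose $L_2\otimes L_2=L_0\oplus L_2\oplus L_4$ via $\incl^{2,2}_r$ for $r\in\{0,2,4\}$ (with $L_4$ present since $p\ge5$), and precompose both sides with $(\incl^{2,2}_r\otimes\id)\circ\incl^{r,n}_n$. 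By \eqref{EqnBraidComposition} the braiding acts on the $L_r$ summand by $(-1)^{(4-r)/2}$, that is $+1$ on $L_0$ and $L_4$ and $-1$ on $L_2$. So the left side vanishes on $L_0$ and $L_4$, as does the right side, since $[,]=\proj^{2,2}_2$ kills those summands. Only the $L_2$ component remains. There the left side equals $2\lambda^2$ times the scalar $\alpha^{2,2,2}_{n,n,n}$ from \eqref{EqnAssocComposition}, with the middle index $s=n$ the only possible channel. The right side is $\lambda\proj^{2,n}_n\circ\incl^{2,n}_n=\lambda$, using $\proj^{2,2}_2\circ\incl^{2,2}_2=\id$. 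The condition therefore reads $2\lambda^2\alpha^{2,2,2}_{n,n,n}=\lambda$.

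\textbf{Step 2: evaluate the scalar.} I would avoid computing $\alpha$ directly and instead use the known case from Section~\ref{SecSL2Lie}. For $1\le n\le p-3$, the genuine $\fsl_2$-action on $V_n$ corresponds to $\lambda_0=\frac{n+2}4$, and its image in $\Ver_p$ is an action of $\fg$. Since $\lambda_0\neq0$ (because $n+2<p$), Step 1 gives $2\lambda_0\alpha^{2,2,2}_{n,n,n}=1$, so $\alpha^{2,2,2}_{n,n,n}=\frac{2}{n+2}$. As a cross-check, Proposition~\ref{PropRational} with $u=1$ gives the same value when $n$ is even, since $f_1(1,y,y)g(1,y,y)$ is then rational in $y$. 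The condition becomes $\lambda^2\cdot\frac{4}{n+2}=\lambda$, so $\lambda\in\{0,\frac{n+2}4\}$. Conversely, both values satisfy the identity, so both give actions.

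\textbf{Main obstacle.} The delicate step is the bookkeeping in Step 1. One has to confirm that the two compositions really reduce to a multiple of $\alpha^{2,2,2}_{n,n,n}$, with the associator direction matching \eqref{EqnAssocComposition} or \eqref{EqnRightLeft}, and that there are no extra channels: $s$ must equal $n$ because each $f$ lands in $L_n$. One also has to get the factor $2$ from the braid term right. Any error in this constant would be absorbed anyway, because Step 2 calibrates the scalar against the known nonzero solution $\lambda_0$. So the argument only needs the structural fact that the condition has the form $c\lambda^2=\lambda$ with $c\neq0$.
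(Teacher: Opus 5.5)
Your proposal is correct and is essentially the paper's argument: write $f=\lambda\proj^{2,n}_n$, note that the action identity is quadratic in $\lambda$ on one side and linear on the other, and calibrate against the genuine $\fsl_2$-action at $\lambda_0=\frac{n+2}4\neq0$. Your channel-by-channel reduction to $2\lambda^2\alpha^{2,2,2}_{n,n,n}=\lambda$ is a more explicit version of the paper's remark that $f\circ([,]\otimes1)\neq0$ at the known solution, and it gives the value $\alpha^{2,2,2}_{n,n,n}=\frac{2}{n+2}$ as a by-product. One small slip: for $p=5$ the summand $L_4$ is not present in $L_2\otimes L_2$, since $(2,2,4)$ fails the $p$-triangle condition, but that channel contributes nothing anyway.
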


\begin{proof}
For $n\in\{0,p-2\}$ the only morphism is $f=0$. Otherwise, $\dim\Hom(L_2\otimes L_n,L_n)=1$, so $f=\lambda\frac{n+2}4\proj^{2,n}_n$ for some $\lambda\in\bk$. This is a Lie algebra action if and only if
\[f\circ(1\otimes f)\circ((1-\braid_{L_2,L_2})\otimes1)=f\circ([,]\otimes1)\]
as morphisms $L_2\otimes L_2\otimes L_n\to L_n$. If $\lambda=1$ then $f$ is the image in $\Ver_p$ of the ordinary Lie algebra action $\fsl_2\otimes V_n\to V_n$ as shown in Section~\ref{SecSL2Lie}, so the equation above holds and $f\circ([,]\otimes1)\neq0$. This means that the equation is equivalent to $\lambda^2=\lambda$, so $\lambda\in\{0,1\}$.
\end{proof}

\begin{prop}\label{PropCanAction}
Let $p\geq5$ be a prime. An action of $\pi(\Ver_p)$ on an object $X\in\Ver_p$ induces actions of $\mZ/2$ and $\fsl(L_1)$ on $X$. This is the canonical action if and only if it diagonalises over a decomposition of $X$ into simples such that:
\begin{enumerate}
\item the action of $\mZ/2$ on each simple summand $L_n\subseteq X$ is the parity action, meaning $1\in\mZ/2$ acts by $(-1)^n$, and
\item the action of $\fsl(L_1)\cong L_2$ on each simple summand $L_n\subseteq X$ with $n\not\in\{0,p-2\}$ is the unique non-trivial action as in Lemma~\ref{LemUniqueAction}.
\end{enumerate}
In particular, if $\fg$ is a Lie algebra and $\phi:\fsl(L_1)\to\fg$ is a Lie algebra morphism, then $(\fg,\phi)$ is a based Lie algebra if and only if the adjoint action of $\fsl(L_1)$ on $\fg$ via $\phi$ satisfies (2).
\end{prop}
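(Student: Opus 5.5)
The plan is to reduce the question about $\pi(\Ver_p)$-actions to the structure of its Harish-Chandra pair $(\mZ/2,\fsl(L_1))$. By \cite{Ve1}, an action of $\pi(\Ver_p)$ on $X$ is the same as compatible actions of $\mZ/2$ (as an ordinary group acting on $X$ by automorphisms in $\Ver_p$) and of the Lie algebra $\fsl(L_1)\cong L_2$. Compatibility here means the $\mZ/2$-action commutes with the $\fsl(L_1)$-action, since $\mZ/2$ acts trivially on $\fsl(L_1)$.

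The canonical action is natural in $X$ and monoidal, so it is determined by its values on simples, and on a direct sum it is the direct sum of the actions on the summands. First I will identify it on each simple $L_n$. Via $\Ver_p\simeq\Ver_p^+\boxtimes\sVec$, the $\mZ/2=\pi(\sVec)$ factor acts on $L_n$ by the parity automorphism. This is $(-1)^n$, because $L_n$ lies in $\Ver_p^+$ tensored with $\unit$ or $\bar\unit$ according to the parity of $n$. For the $\fsl(L_1)$ factor, the canonical action on $L_1$ is the tautological action of $\fsl(L_1)\subseteq\fgl(L_1)$, which is nonzero. Since the action is monoidal and every $L_n$ with $n\le p-2$ is a summand of a tensor power of $L_1$, the action on $L_n$ for $n\notin\{0,p-2\}$ is nonzero. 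Equivalently, it is the image of the usual $\fsl_2$-action on $V_n$. By Lemma~\ref{LemUniqueAction} it is therefore exactly $\frac{n+2}4\proj^{2,n}_n$. So the canonical action on $X$ diagonalises over any decomposition into simples and satisfies (1) and (2). This gives the forward direction.

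For the converse, suppose an action diagonalises over some decomposition $X=\bigoplus L_{n_j}$ with (1) and (2) holding on each summand. Then on each summand it agrees with the canonical action, as just computed. For $n\in\{0,p-2\}$ there is no choice, since $\Hom(L_2\otimes L_n,L_n)=0$. The canonical action on $X$ is the direct sum of the canonical actions on the summands, by naturality with respect to the inclusions and projections. Hence the two actions on $X$ coincide. The main obstacle is the rigorous identification of the canonical $\fsl(L_1)$-action on $L_1$ as the tautological one, and more generally its agreement with the $\fsl_2$-action under the functor $\Tilt\SL_2\to\Ver_p$. I would deduce this from \cite[Theorem~4.9]{Ve2}, where $\pi(\Ver_p^+)$ is constructed inside $\GL(L_1)$. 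Alternatively, one notes that the canonical action cannot be trivial on $L_1$, since otherwise $\pi$ would act trivially everywhere; a nonzero action is then unique by Lemma~\ref{LemUniqueAction}.

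For the final statement about based Lie algebras, the adjoint action of $\fg_\pi=\fsl(L_1)$ on $\fg$ via $\phi$ is a Lie action. The $\mZ/2$ part of the canonical action is the parity action, which is automatically respected by morphisms. So the based condition reduces to requiring that the $\fsl(L_1)$-action coincide with the canonical one. To get a diagonalising decomposition, I decompose $\fg$ into isotypic components $\fg=\bigoplus_n L_n\otimes M_n$. A map $L_2\otimes L_n\otimes M_n\to L_n\otimes M_n$ is $\proj^{2,n}_n\otimes A_n$ for a matrix $A_n$. The question is then exactly whether condition (2) holds on some decomposition, i.e.\ whether $A_n$ is $\frac{n+2}4\id$ in a suitable basis. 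This is what the criterion expresses, which finishes the argument.
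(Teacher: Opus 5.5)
Your proof is correct and takes essentially the paper's route: a $\pi(\Ver_p)$-action is determined by its $\mZ/2$- and $\fsl(L_1)$-components via the Harish-Chandra pair $(\mZ/2,\fsl(L_1))$, and the canonical action is identified on each simple using $\Ver_p\simeq\Ver_p^+\boxtimes\sVec$ and Lemma~\ref{LemUniqueAction}, then extended to all of $X$ by naturality. Two small slips in presentation: first, nonvanishing of the canonical action on $L_n$ needs your identification with the image of the $\fsl_2$-action on $V_n$, because being a summand of $L_1^{\otimes n}$ does not by itself force a nonzero action ($L_0\subset L_1^{\otimes 2}$ is a counterexample); second, in your last paragraph, diagonalising also requires the components $L_2\otimes L_n\to L_m$ with $m\neq n$ to vanish, not only $A_n$ to be scalar---though criterion (2), read with ``diagonalises'', already covers this.
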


\begin{proof}
Because of the correspondence between actions of $\pi(\Ver_p)$ on $X$ and maps $\pi(\Ver_p)\to\GL(X)$, two actions of $\pi(\Ver_p)$ are equal if and only if the corresponding actions of $\mZ/2$ and $\fsl(L_1)$ are equal. So the proposition holds if the canonical action diagonalises and induces the actions (1) and (2), and this follows from $\Ver_p\simeq\Ver_p^+\boxtimes\sVec$ and Lemma~\ref{LemUniqueAction}.
\end{proof}

We will often omit $\phi$ for based Lie algebras in $\Ver_p$, because of the following fact:

\begin{prop}\label{PropUnique}
If $\fg$ is a Lie algebra in $\Ver_p$ with $p\geq5$, then there is at most one Lie algebra morphism $\phi:\fsl(L_1)\to\fg$ such that $(\fg,\phi)$ is a based Lie algebra. If this morphism exists, then it is zero if and only if $\fg\in\sVec\subset\Ver_p$.
\end{prop}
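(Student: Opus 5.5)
Write $\fg=\bigoplus_n \fg_n$, where $\fg_n$ is the $L_n$-isotypic component. Since $\Hom(L_2,\fg)=\Hom(L_2,\fg_2)$, any morphism $\phi:\fsl(L_1)\to\fg$ lands in $\fg_2$. By Proposition~\ref{PropCanAction}, $(\fg,\phi)$ is based exactly when the composite
\[L_2\otimes\fg\xrightarrow{\phi\otimes1}\fg\otimes\fg\xrightarrow{[,]}\fg\]
agrees with the canonical action. That action is $\frac{n+2}4\proj^{2,n}_n$ on each summand $L_n$, computed with respect to a suitable decomposition of $\fg$ into simples.

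For uniqueness, suppose $\phi$ and $\phi'$ both make $\fg$ based. The adjoint action of $\fsl(L_1)$ on $\fg$ is then canonical for both. The canonical action is a fixed, intrinsically defined morphism $L_2\otimes\fg\to\fg$: it is the differential of the canonical $\pi(\Ver_p)$-action, which is natural in the object. So $[\phi(-),-]=[\phi'(-),-]$ as morphisms $L_2\otimes\fg\to\fg$, and $\psi=\phi-\phi'$ satisfies $[\psi(-),-]=0$. Thus $\psi$ maps $L_2$ into the centre $Z(\fg)$.

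Since $\phi$ and $\phi'$ are Lie morphisms, and $\psi$ lands in the centre, we have
\[\phi'([a,b])=[\phi'(a),\phi'(b)]=[\phi(a)-\psi(a),\phi(b)-\psi(b)]=[\phi(a),\phi(b)]=\phi([a,b]).\]
Hence $\psi\circ\proj^{2,2}_2=0$. The map $\proj^{2,2}_2:L_2\otimes L_2\to L_2$ is nonzero, because $(2,2,2)$ satisfies the $p$-triangle condition for $p\geq5$, and therefore it is an epimorphism onto the simple object $L_2$. So $\psi=0$ and $\phi=\phi'$. This is the key step: the bracket on $L_2$ is surjective, so central perturbations vanish.

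For the second claim, suppose first that $\fg\in\sVec$. Then $\Hom(L_2,\fg)=0$, so $\phi=0$. Conversely, suppose $\phi=0$ gives a based structure. The canonical action of $\fsl(L_1)$ is then zero on $\fg$, so by Proposition~\ref{PropCanAction}(2) no summand $L_n$ with $n\notin\{0,p-2\}$ can occur. Hence $\fg\in\sVec$.

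The main subtlety is the uniqueness step. There one must argue that "the canonical action" is a single well-defined morphism $L_2\otimes\fg\to\fg$, independent of which decomposition of $\fg$ is chosen to diagonalise it. This follows because the canonical action is natural; alternatively, Lemma~\ref{LemUniqueAction} together with the fact that $\frac{n+2}4\proj^{2,n}_n$ commutes with all endomorphisms of $L_n^{\oplus m}$ shows the formula is basis-independent.
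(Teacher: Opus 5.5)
Your proof is correct, and your uniqueness argument takes a different route from the paper's.

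\textbf{The paper's route.} It first shows that a based $\phi$ must be non-zero when $\fg\notin\sVec$, and then splits into two cases.
If $\phi_2=\lambda\phi_1$, the Lie-morphism condition gives $\lambda^2=\lambda$, so $\lambda=1$.
Otherwise it chooses a decomposition $\fg\cong\im(\phi_1)\oplus\im(\phi_2)\oplus\fg'$. It then uses the diagonal form of the canonical action from Proposition~\ref{PropCanAction}, together with antisymmetry, to show that the bracket component $\im(\phi_1)\otimes\im(\phi_2)\to\im(\phi_1)$ would have to be both zero and non-zero.

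\textbf{Your route.} You work linearly with $\psi=\phi-\phi'$. The canonical action is a single morphism $L_2\otimes\fg\to\fg$, which is built into the definition of a based Lie algebra, so $\psi$ is central. The Lie-morphism property then gives $\psi\circ\proj^{2,2}_2=0$, and since $\proj^{2,2}_2$ is an epimorphism, $\psi=0$.

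\textbf{Comparison.} Your argument avoids the case split, the preliminary non-vanishing step, and the choice of a complement. It needs only that the canonical action is a fixed morphism, not its explicit diagonal form on summands. The paper's version stays within the summand-by-summand bookkeeping used throughout the rest of the paper.

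\textbf{A further shortening.} Naturality of the canonical action gives $[,]\circ(\phi\otimes\psi)=\psi\circ\proj^{2,2}_2$. The left side vanishes because $\psi$ is central, so the Lie-morphism hypothesis is not needed for uniqueness at all. In fact the same naturality shows that any $\phi$ with canonical adjoint action is automatically a Lie algebra morphism.

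Your treatment of the second claim ($\phi=0$ exactly when $\fg\in\sVec$) coincides with the paper's.
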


\begin{proof}
Suppose such a morphism $\phi$ exists. If $\fg\in\sVec$, then the only $\Ver_p$-morphism $\fsl(L_1)=L_2\to\fg$ is the zero morphism, and we are done. If instead there is a simple summand $L_n\subseteq\fg$ with $n\in\{1,\dots,p-3\}$, then the canonical action of $\fsl(L_1)$ on $L_n$ is non-zero, so $\phi$ must be non-zero as well. Now suppose for a contradiction that there are two non-zero maps $\phi_1,\phi_2:\fsl(L_1)\to\fg$ both giving the canonical action of $\fsl(L_1)$. If $\phi_2=\lambda\phi_1$ for some $\lambda\in\bk^\times$, then
\[\phi_1\circ[,]=[,]\circ(\phi_1\otimes\phi_1)\text{ and }\phi_2\circ[,]=[,]\circ(\phi_2\otimes\phi_2)\]
imply that $\lambda^2=\lambda$, hence $\lambda=1$ and $\phi_1=\phi_2$. Otherwise, we have a direct sum decomposition $\fg\cong \im(\phi_1)\oplus \im(\phi_2)\oplus\fg'$, and the Lie bracket restricted to $\im(\phi_1)\otimes \im(\phi_2)\to\im(\phi_1)$ must be zero for $\phi_1$ to be canonical and non-zero for $\phi_2$ to be canonical, a contradiction.
\end{proof}

Recall from \cite{De} that if $\charr\bk\neq2$, $G$ is a supergroup and $\phi:\mZ/2\to G$ is a group morphism, then $(G,\phi)$ is an $\sVec$-group if and only if $\phi$ induces the parity automorphism of $G$. On the Harish-Chandra pair $(G_0,\fg)$, this is equivalent to the image $\phi$ being central in $G_0$ and inducing the parity action of $\mZ/2$ on $\fg$. The following gives a similar characterisation of $\Ver_p$-groups:

\begin{prop}\label{PropBased}
Let $p\geq5$ be a prime. Let $G$ be a finite-type affine group scheme over $\Ver_p$ with Harish-Chandra pair $(G_0,\fg)$, and let $\phi:\pi(\Ver_p)\to G$ be a group morphism. Then $(G,\phi)$ is a $\Ver_p$-group if and only if:
\begin{enumerate}
\item $(\fg,\Lie(\phi))$ is a based Lie algebra;
\item the image of the map $\phi_0:\mZ/2\to G_0$ induced by $\phi$ is central in $G_0$; and
\item the action of $\mZ/2$ on $\fg$ coming from $\phi_0$ and the action of $G_0$ is the parity action.
\end{enumerate}
\end{prop}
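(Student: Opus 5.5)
The plan is to reduce both directions to a comparison of two actions of $\pi(\Ver_p)$ on the ind-object $\cO(G)$: the canonical one, and the one induced by conjugation through $\phi$. By definition, $(G,\phi)$ is a $\Ver_p$-group exactly when these agree. As in the proof of Proposition~\ref{PropCanAction}, two actions of $\pi(\Ver_p)$ on an object coincide if and only if the induced actions of $\mZ/2$ and of $\fsl(L_1)$ coincide. This holds because $\pi(\Ver_p)$ has Harish-Chandra pair $(\mZ/2,\fsl(L_1))$, and actions correspond to morphisms into $\GL$, which by \cite{Ve1} are determined by morphisms of Harish-Chandra pairs. I will apply this on finite-length subobjects of $\cO(G)$, which exhaust it. Since the conjugation action of $G$ on $\cO(G)$ is compatible with the Hopf structure, I expect it is enough to test on the Harish-Chandra data rather than on all of $\cO(G)$. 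The real content is then the translation of ``conjugation via $\phi$ equals canonical'' into (1)–(3).

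For the Lie part, conjugation through $\phi$ restricted to $\fsl(L_1)$ acts on $\fg=\Lie(G)$ by the adjoint action via $\Lie(\phi)$. Proposition~\ref{PropCanAction} says this equals the canonical action exactly when (1) holds. For the group part, the $\mZ/2$-action on $\fg$ through $\phi_0$ and $\mathrm{Ad}$ of $G_0$ must be the parity action, which is (3). The conjugation action of $\mZ/2$ on $G_0$ must be trivial, because the canonical action of $\pi(\Ver_p)$ on the underlying $\Vecc$-part $\cO(G_0)$ is trivial. This is (2).

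For the converse, I will use the description of $\cO(G)$ from \cite{Ve1} as $\cO(G_0)\otimes(\text{dual of }U(\fg)/U(\fg_0))$, or equivalently via $\mathrm{Hom}_{U(\fg_0)}(U(\fg),\cO(G_0))$. Assume (1)–(3). Then $\mZ/2$ acts on this description trivially on $\cO(G_0)$ by (2) and by parity on $\fg$ by (3), so it acts by parity on $\cO(G)$. Similarly $\fsl(L_1)$ acts on $\fg$ canonically by (1). It also acts trivially on $\cO(G_0)$: its image lands in $\fg_{\geq}$ non-invariant summands, and it acts on $\fg_0$ trivially because the canonical action on $\unit$ is trivial. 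Hence it acts canonically on $U(\fg)$ and on its dual, since canonical actions are compatible with tensor products and duals, being tensor automorphisms of the identity.

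The main obstacle is justifying that agreement on $G_0$ and $\fg$ propagates to all of $\cO(G)$, and in particular handling the $\fsl(L_1)$-action on the $\cO(G_0)$ factor rigorously. I would handle this through naturality of the Harish-Chandra equivalence: conjugation by $\phi$ is a morphism of Harish-Chandra pairs $\pi(\Ver_p)\ltimes G\to G$, and the canonical action also comes from such a morphism. Two morphisms agreeing on $(G_0,\fg)$ are then equal by the equivalence of \cite{Ve1}.
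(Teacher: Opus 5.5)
Your outline matches the paper's. You reduce canonicity of the conjugation action on $\cO(G)$ to the Harish-Chandra data, read off (1) and (3) from Proposition~\ref{PropCanAction} applied to $\fg$, and get (2) from the triviality of $\mZ/2$ on $\cO(G_0)$.

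The first gap is the step you flag as the main obstacle: it is not supplied, and your proposed fix does not work as written. The $\fsl(L_1)$-part of the conjugation action is not induced by automorphisms of the pair $(G_0,\fg)$. So functoriality of the equivalence in \cite{Ve1} tells you nothing about how it acts on $\Hom_{U(\fg_0)}(U(\fg),\cO(G_0))$. Moreover, the canonical action of $\pi(\Ver_p)$ on $G$ is not a priori induced by any morphism into $G$; that is exactly what is to be proved. Hence there are no ``two morphisms agreeing on $(G_0,\fg)$'' to compare.

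The paper closes this step by dualising. $\cO(G)$ is determined by its finite dual $H=\cO(G)^\circ$ \cite[Proposition~4.33]{Ve1}. Also $H\cong U(\fg)\otimes_{U(\fg_0)}\cO(G_0)^\circ$ \cite[Lemma~6.6]{Ve1}, which is generated as an algebra by $\fg$ and $\cO(G_0)^\circ$. Since the conjugation action and the canonical action are both by algebra automorphisms, they agree on $H$ once they agree on these two subobjects. Your own idea can also be salvaged: the map $(g,h)\mapsto g\phi(h)$ is a homomorphism $G\rtimes_{\mathrm{can}}\pi(\Ver_p)\to G$ exactly when conjugation via $\phi$ is canonical, and this can be tested on Harish-Chandra data.

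The second gap is your reason why $\fsl(L_1)$ acts trivially on the $\cO(G_0)$ part. In $H$ this means that the image of $\Lie(\phi)$ commutes with $\cO(G_0)^\circ$, which amounts to $\mathrm{Ad}(G_0)$ fixing that image. Triviality of the bracket with $\fg_0$ only gives invariance under the first Frobenius kernel of $G_0$, and nothing at all if $G_0$ is \'etale. The observation that $\cO(G_0)$ has only $\unit$ summands helps only once the commutator is known to land back in $\cO(G_0)^\circ$.

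The missing input is (1) itself. For $g\in G_0(R)$, the map $\mathrm{Ad}_g\circ\Lie(\phi)$ is again a Lie morphism inducing the canonical action, because $\mathrm{Ad}_g$ is a morphism in $\Ver_p$ and the canonical action is natural. So $\psi=\mathrm{Ad}_g\circ\Lie(\phi)-\Lie(\phi)$ lands in the centre of $\fg$. Expanding the homomorphism condition then gives $\psi\circ\proj^{2,2}_2=0$. Since $\proj^{2,2}_2$ is an epimorphism, $\psi=0$ (compare Proposition~\ref{PropUnique}). With these two points supplied, your argument goes through.
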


\begin{proof}
We need to determine for which $\phi$ the action of $\pi(\Ver_p)$ on $\cO(G)$ is canonical. Since $\cO(G)$ is finite type, it is determined by its finite dual $H=\cO(G)^\circ$ by \cite[Proposition~4.33]{Ve1}. By \cite[Lemma~6.6]{Ve1} we have $H\cong U(\fg)\otimes_{U(\fg_0)}\cO(G_0)^\circ$ with $\fg$ and $\cO(G_0)^\circ$ embedding into $H$. This means that the action of $\pi(\Ver_p)$ on $H$ is canonical if and only if it is canonical on $\fg$ and $\cO(G_0)$. The action on $\fg$ gives conditions (1) and (3) by Proposition~\ref{PropCanAction}. Since all simple summands of $\cO(G_0)$ are isomorphic to $\unit$, the action of $\fsl(L_1)$ on $\cO(G_0)$ is zero and automatically canonical, while the action of $\mZ/2$ is canonical if and only if it is trivial. This is equivalent to the conjugation action of $\mZ/2$ on $G_0$ via $\phi_0$ being trivial, which is equivalent to the image of $\phi_0$ being central, giving condition (2).
\end{proof}

\begin{corollary}\label{CorBased}
Let $(\fg,\phi)$ be a based Lie algebra in $\Ver_p$ with $p\geq5$. If $\fg$ is invariantless, then $(\mZ/2,\fg)$ is a Harish-Chandra pair with the parity action of $\mZ/2$ on $\fg$, and the maps $\id_{\mZ/2}$ and $\phi$ give the corresponding affine group scheme a $\Ver_p$-group structure.
\end{corollary}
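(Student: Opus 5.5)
Corollary~\ref{CorBased} follows by combining Corollary~\ref{CorLieRes} with Proposition~\ref{PropBased}. Since $\fg$ is invariantless, $\fg_0=0$, so $\Lie(G_0)\cong\fg_0$ holds for any finite (\'etale) group $G_0$, in particular $G_0=\mZ/2$, whose Lie algebra is zero. This is where we use $p\geq5$, which guarantees $p\neq2$.

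The first step is to verify that $(\mZ/2,\fg)$ with the parity action is a Harish-Chandra pair. The parity action, in which $1\in\mZ/2$ acts on each summand $L_n$ by $(-1)^n$, must be an action by Lie algebra automorphisms. Concretely, the bracket $L_a\otimes L_b\to L_c$ can only be nonzero when $(a,b,c)$ satisfies the $p$-triangle condition, which forces $a+b+c$ even. Hence $(-1)^a(-1)^b=(-1)^c$, and the bracket is $\mZ/2$-equivariant. More invariantly, the parity action is the action of $\pi(\sVec)\subset\pi(\Ver_p)$, i.e.\ the canonical action, and every morphism in $\Ver_p$ commutes with the canonical action. The remaining conditions are vacuous: compatibility of $i:\Lie(\mZ/2)=0\cong\fg_0=0$, and agreement of the actions of $\fg_0$ and $\Lie(G_0)$, since both are zero. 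By \cite{Ve1} this pair yields an affine group scheme $G$ over $\Ver_p$.

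The second step builds $\phi:\pi(\Ver_p)\to G$. The Harish-Chandra pair of $\pi(\Ver_p)$ is $(\mZ/2,\fsl(L_1))$ with trivial $\mZ/2$-action. A morphism of Harish-Chandra pairs $(\mZ/2,\fsl(L_1))\to(\mZ/2,\fg)$ is given by $\id_{\mZ/2}$ together with the Lie morphism $\phi:\fsl(L_1)\to\fg$. We must check that it is $\mZ/2$-equivariant. Because $\phi$ is a morphism in $\Ver_p$, its image lies in the isotypic component of $L_2$, on which parity acts trivially, matching the trivial action on $\fsl(L_1)$. It is also compatible on the degree-zero parts, both of which are zero. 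The equivalence of categories from \cite{Ve1} then gives a group morphism, which we also denote $\phi$.

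Finally, we apply Proposition~\ref{PropBased}. Condition (1) is the hypothesis that $(\fg,\phi)$ is based. Condition (2) holds because $\mZ/2$ is abelian, so its image under $\id$ is central in $G_0=\mZ/2$. Condition (3) holds by construction, since the action of $\mZ/2$ on $\fg$ is the parity action. The only mildly delicate point is the bookkeeping in the morphism of Harish-Chandra pairs; the conditions in Proposition~\ref{PropBased} themselves are immediate.
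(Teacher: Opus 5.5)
Your proposal is correct and follows the route the paper intends: the corollary is stated without proof right after Proposition~\ref{PropBased}, and your checks (that $(\mZ/2,\fg)$ with the parity action is a Harish-Chandra pair, that $(\id_{\mZ/2},\phi)$ is a morphism of pairs out of $(\mZ/2,\fsl(L_1))$, and that conditions (1)--(3) hold) are exactly the omitted argument. One small inaccuracy: the constant group scheme $\mZ/2$ is \'etale with zero Lie algebra in every characteristic, so that is not where $p\geq5$ is used; the hypothesis is actually needed for the description of $\pi(\Ver_p)$ and for Proposition~\ref{PropBased}.
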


\begin{remark}\label{RemBased}
There exist Lie algebras in $\Ver_p$ which are not based, for example:
\begin{enumerate}
\item For $n\in\{1,\dots,p-3\}$, $L_n$ with the trivial bracket $[,]=0$ is a non-based Lie algebra.
\item The Lie superalgebra $\mathfrak{psl}(2|2)$ (Kac type $A(1,1)$) is simple in characteristic $p\geq3$ by \cite[Lemma~2.7]{BZ}, and has an outer action of $\SL_2$ under which it has even part $V_0^{\oplus 6}$ and odd part $V_1^{\oplus 4}$ (this is stated in \cite[\S6.4]{GP} over $\mC$, but the same methods apply in characteristic $p\geq3$). Thus, it is a superalgebra in $\Tilt\SL_2$, and its image in $\Ver_p$ can be turned into an operadic Lie algebra by tensoring the odd part by $\bar\unit$, as discussed in Section~\ref{SecSuper}. By Propositions~\ref{PropNoL1} and \ref{PropCanAction}, this is a non-based simple Lie algebra structure on the object $L_0^{\oplus6}\oplus L_{p-3}^{\oplus4}\in\Ver_p$ for all $p\geq5$. In particular, this only contains $L_2$ summands for $p=5$, and then the superalgebra grading means that no such summand can act canonically on itself.
\end{enumerate}
However, if $\fg$ is a Lie algebra in $\Ver_p$ with $p\geq5$, then we can construct a based Lie algebra $\widehat{\fg}=\fsl(L_1)\ltimes\fg$; that is, the underlying object is $\fsl(L_1)\oplus\fg$, and the bracket comes from the Lie algebra structures and the canonical action of $\fsl(L_1)$ on $\fg$. If $\phi$ is the inclusion $\fsl(L_1)\hookrightarrow\widehat{\fg}$ then we have an equivalence $\Rep(\fg)\simeq\Rep(\widehat{\fg},\phi)$ where a representation of $\fg$ is endowed with the canonical action of $\fsl(L_1)$ to turn it into a $\widehat{\fg}$-representation. This can also be performed on the level of groups; see \cite[Example~0.4(ii)]{De} for the corresponding construction in $\sVec$. Thus, for the purposes of understanding representations of Lie algebras in $\Ver_p$, it is sufficient to study only based Lie algebras. However, note that if $\fg$ is non-zero then $\widehat{\fg}$ is non-simple, since $\fg$ is an ideal of $\widehat{\fg}$.

Additionally, there exist based Lie algebras in $\Ver_p$ which do not integrate to an affine group scheme. In particular, some modular Lie algebras over $\bk$ do not have a restricted structure (for example the generalised Jacobson-Witt algebra $W(1;n)$ which is simple and non-restrictable for $n\geq2$ and $p>2$ by \cite[Ch.~4 Theorem~2.4]{SF}), and thus cannot be equal to $\Lie(G)$ for any group $G$ when interpreted in $\Vecc\subseteq\Ver_p$.
\end{remark}

\section{Images of ordinary Lie algebras and superalgebras in $\Ver_p$}\label{SecImages}

In this section, we discuss methods of constructing Lie algebras in $\Ver_p$ from ordinary Lie algebras and superalgebras, and prove part (b) of Theorem~\ref{ThmMain}. We also discuss the unusual Lie algebras arising from $E_8(a_1)$, $Q(2)$ and $H(5)$.

\subsection{Restrictions of Lie algebras to $\Ver_p$}\label{SecOrdinary} Recall the following construction from \cite[\S3.2]{Ka} which produces operadic Lie algebras in $\Ver_p$. For $\charr\bk=p>0$, we have an affine group scheme $\alpha_p$ over $\bk$ whose value on a commutative algebra $A$ is $\alpha_p(A)=\{a\in A\mid a^p=0\}$ with the addition operation. The group morphism $\alpha_p\to\SL_2$ given by $a\mapsto\sM{1&a\\0&1}$ induces an equivalence between $\Ver_p$ and the semisimplification of $\Rep_{\bk}\alpha_p\simeq\Mod(\bk[t]/t^p)$. If $\fg$ is an operadic Lie algebra in $\Vecc_\bk$ and $e$ is an element of $\fg$ such that $({\rm ad}(e))^p=0$, then $\fg$ has a $\Rep\alpha_p$-module structure where the action of $t\in\bk[t]/t^p$ is ${\rm ad}(e)$. Moreover, the bracket morphism is an $\alpha_p$-module morphism, and thus the image of $\fg$ and its bracket under $\Rep\alpha_p\to\Ver_p$ is an operadic Lie algebra.

A special case of this is where $G$ is a quasi-simple algebraic group and $e\in\Lie(G)$ is the image of $\sM{0&1\\0&0}$ under a map $\fsl_2\to\Lie(G)$ induced by a morphism $\phi:\SL_2\to G$ so that the restriction of $\Lie(G)$ to $\SL_2$ is a tilting module. When this is the case, we write $\Lie_p(G,\phi)$ for the image of $\Lie(G)$ in $\Ver_p$, and this is a based Lie algebra when equipped with the image of $\Lie(\phi)$. A sufficient condition for this construction to be well-defined is that $\Lie(G)$ is a tilting module of $G$ and $(G,\SL_2)$ with the map $\phi$ is a Donkin pair in the sense of \cite{HM} (for example this holds when $\phi$ is optimal and $G$ has classical type by \cite[Theorem~4.1.2]{HM}). 

Recall that a \textbf{principal morphism} $\SL_2\to G$ is a morphism whose image contains a regular unipotent element. If the map $\phi$ is principal, then we will abbreviate $\Lie_p(G,\phi)$ as $\Lie_p(G)$, and this is well-defined for $p>h$ where $h$ is the Coxeter number of $G$ by \cite[Lemma~3.2.2]{CEN}. Following \cite{EO}, we say that an inclusion of algebraic groups $K\subset G$ is a \textbf{principal pair} if $K$ contains a regular unipotent element of $G$; this means that there is a principal morphism $\SL_2\to K$ such that the composition $\SL_2\to K\to G$ is also principal, and thus the inclusion $\Lie(K)\subseteq\Lie(G)$ induces an inclusion $\Lie_p(K)\subseteq\Lie_p(G)$ in $\Ver_p$.

For $p>h$, define $\Ver_p(G)$ to be the semisimplification of the category of tilting modules of $G$. The following is a summary of results from \cite[\S3]{CEN}:

\begin{prop}\label{PropVerpG}
Let $\charr\bk=p\geq5$. If $G$ is a simple algebraic group over $\bk$ with $p>h$, then $\fg=\Lie_p(G)$ is an invariantless based Lie algebra in $\Ver_p^+$ which is simple for $p\geq h+2$ and zero for $p=h+1$. We have
\[\Ver_p(G)\simeq\Rep_{\Ver_p^+}(\fg,\phi)\boxtimes\Rep_{\sVec}(Z,z)\simeq\Rep_{\Ver_p}((Z,\fg),(z,\phi))\]
where $Z$ is the centre of $G$, the Harish-Chandra pair $(Z,\fg)$ has $Z$ acting trivially on $\fg$, and $\phi,z$ come from a principal morphism $\SL_2\to G$.
\end{prop}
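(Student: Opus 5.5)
This proposition is stated as a summary of results from \cite[\S3]{CEN}, so the plan is to assemble it from those results plus the preliminaries above rather than to reprove the structure theory.

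\textbf{Well-definedness and basic properties of $\fg$.} For $p>h$, \cite[Lemma~3.2.2]{CEN} shows that $\Lie(G)$ restricted along a principal morphism $\phi:\SL_2\to G$ is a tilting $\SL_2$-module. The summands are $V_{2m_i}$ for the exponents $m_i$ of $G$, since $\Lie(G)$ decomposes like the characteristic zero module for $p>h$. All highest weights are even and at most $2(h-1)$, so:
\begin{enumerate}
\item the image lies in $\Ver_p^+$;
\item it has no $\unit$ summand, since every exponent satisfies $m_i\geq1$, so $\fg$ is invariantless;
\item when $p=h+1$, every summand $V_{2(h-1)}=V_{2p-4}$ has highest weight $\geq p-1$. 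Indeed, the smallest exponent $1$ gives $V_2$, which survives, but for $p=h+1$ one checks via \cite{CEN} that the Lie bracket image collapses. More precisely, the claim ``zero for $p=h+1$'' should be read as \cite{CEN} states it, namely that the semisimplified algebra vanishes because all summands become negligible. I would quote this rather than rederive it.
\end{enumerate}
Basedness follows from Proposition~\ref{PropCanAction}: the image of $\Lie(\phi)$ is the $V_2$ summand coming from $\fsl_2\subseteq\Lie(G)$, and its adjoint action on each $V_n$ is the $\fsl_2$ action, which by Section~\ref{SecSL2Lie} is $\frac{n+2}4\proj^{2,n}_n$.

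\textbf{Simplicity for $p\geq h+2$.} I would cite \cite[\S3]{CEN}. Alternatively, an ideal of $\fg$ pulls back to an $\alpha_p$-stable, hence $e$-stable, subspace. Simplicity of $\Lie(G)$ for $p>h$, together with the fact that the $V_2$ summand acts nontrivially on every summand, forces any nonzero ideal to contain $\fsl(L_1)$ and then everything.

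\textbf{The equivalences.} The heart of the argument is the equivalence $\Ver_p(G)\simeq\Rep_{\Ver_p^+}(\fg,\phi)\boxtimes\Rep_{\sVec}(Z,z)$. I would take this directly from \cite[\S3]{CEN}, where semisimplified tilting modules of $G$ are identified with representations of $\Lie_p(G)$ in $\Ver_p^+$, twisted by the central character. Here $z:\mZ/2\to Z$ is the image of $-1\in\SL_2$, which is central and acts on $V_n$ by $(-1)^n$.

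The second equivalence, with $\Rep_{\Ver_p}((Z,\fg),(z,\phi))$, then follows from two ingredients:
\begin{enumerate}
\item the Harish-Chandra correspondence of \cite{Ve1}, applied to the pair $(Z,\fg)$ with $Z$ acting trivially on $\fg$;
\item Proposition~\ref{PropBased}. Conditions (2) and (3) there hold because $Z$ is central and $\fg\in\Ver_p^+$ has trivial parity, so this is a $\Ver_p$-group.
\end{enumerate}
Its representations then split as a $\fg$-module in $\Ver_p^+$ tensored with a $Z$-graded part in $\sVec$, using $\Ver_p\simeq\Ver_p^+\boxtimes\sVec$.

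The main obstacle is the first equivalence, which is the genuine content of \cite{CEN}; everything else is bookkeeping.
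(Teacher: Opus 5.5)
Your plan matches the paper's in taking everything except the right-hand equivalence from \cite[\S3]{CEN}: well-definedness, $\fg\in\Ver_p^+$, invariantless, based, simple for $p\geq h+2$, zero for $p=h+1$, and the first equivalence. You genuinely differ on the right-hand equivalence.

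The paper observes that the Harish-Chandra pair $(Z,\fg)$, with trivial $Z$-action, is the product of $(1,\fg)$ and $(Z,0)$, and that this is a product of $\Ver_p$-groups. The contravariant equivalence of \cite[Theorem~4.2.1]{incompressible} turns this product into a coproduct of tensor categories over $\Ver_p$, which is $\boxtimes$ by \cite[\S2.2.6]{incompressible}. No tensor products or braidings need to be checked by hand.

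Your route is to verify the $\Ver_p$-group conditions via Proposition~\ref{PropBased} and then split representations. This is more elementary and does work, but the sentence ``its representations then split'' carries all the content, and the following pieces need to be supplied:
\begin{itemize}
\item $Z$ is a finite constant group of order prime to $p$ (in type $A$, $|Z|\leq h<p$; otherwise $|Z|\leq 4<p$). This gives $\Lie(Z)=0=\fg_0$, without which $(Z,\fg)$ is not even a Harish-Chandra pair. It also gives the isotypic decomposition $X=\bigoplus_\chi X_\chi$ of a representation.
\item Triviality of the $Z$-action on $\fg$ makes the $\fg$-action preserve each $X_\chi$.
\item The condition that $z$ acts by parity places $X_\chi\otimes\bar\unit^{\otimes\epsilon(\chi)}$ in $\Ver_p^+$, where $\chi(z)=(-1)^{\epsilon(\chi)}$.
\item You must check that $X\mapsto\bigoplus_\chi(X_\chi\otimes\bar\unit^{\otimes\epsilon(\chi)})\boxtimes\bk_\chi$ is a symmetric monoidal equivalence. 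This is where $\Ver_p\simeq\Ver_p^+\boxtimes\sVec$ is actually used.
\end{itemize}
The paper gets all of this from a universal property; your version gives an explicit formula for the equivalence.

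Several side remarks are false and should be removed. They are harmless only because you end up citing \cite{CEN}.
\begin{itemize}
\item In characteristic $p$ the indecomposable summands of $\Lie(G)$ are not the $V_{2m_i}$. The characteristic-zero decomposition only gives a Weyl filtration. A summand with $2m_i=p-1$ is the negligible Steinberg module, and for $2m_i\geq p$ the tilting summand $T_{2m_i}$ absorbs a Weyl factor $V_{2p-2-2m_i}$ and is negligible. This is why, e.g., $\Lie_{37}(E_8)=L_2\oplus L_{22}$.
\item Consequently ``$m_i\geq1$'' alone does not prove invariantlessness. You need that no $T_0$ occurs, for instance because $V_0$ has Weyl multiplicity zero and no $T_n$ with $n\leq 2h-2<2p-2$ has a $V_0$ factor.
\item Your item 3 contradicts itself. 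At $p=h+1$ the $V_2$ does not survive: it is absorbed, together with $V_{2h-2}=V_{2p-4}$, into the negligible $T_{2p-4}$. In general the exponents pair off as $m\leftrightarrow h-m$, so every summand is negligible. The object vanishes; it is not that ``the bracket collapses''.
\item Your alternative simplicity argument fails. A subobject of the semisimplification lifts to an $e$-stable summand of $\Lie(G)$, but not to an ideal, since the bracket's components into negligible summands are discarded. So simplicity of $\Lie(G)$ tells you nothing. Also, $\fsl(L_1)$ acting nontrivially on an ideal $I$ does not force $\fsl(L_1)\subseteq I$. Note that this argument never uses $p\geq h+2$.
\end{itemize}
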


\begin{proof}
The only part not stated in \cite{CEN} is the right side equivalence. $(Z,\fg)$ is the product of $(1,\fg)$ and $(Z,0)$, and this induces a product in the category of $\Ver_p$-groups. So by \cite[Theorem~4.2.1]{incompressible}, $\Rep((Z,\fg),(z,\phi))$ is the coproduct of $\Rep((1,\fg),(1,\phi))\simeq\Rep_{\Ver_p^+}(\fg,\phi)$ and $\Rep((Z,0),(z,0))\simeq\Rep_{\sVec}(Z,z)$, and the Deligne tensor product $\boxtimes$ is a coproduct (see \cite[\S 2.2.6]{incompressible}).
\end{proof}

\subsection{Orthogonal and symplectic Lie algebras}\label{SecSOSp} Let $f$ be a non-degenerate bilinear form on an object $X\in\cD$, meaning $f$ is a morphism $X\otimes X\to\unit$ such that the composition
\[f':X\xrightarrow{1\otimes\coev_X}X\otimes X\otimes X^*\xrightarrow{f\otimes 1}X^*\]
is an isomorphism. Then $X\otimes X$ is a Lie algebra with bracket $(1\otimes f\otimes 1)(1-\braid_{X^{\otimes2},X^{\otimes2}})$, and $1\otimes f'$ is a Lie algebra isomorphism between $X\otimes X$ and $\fgl(X)=X\otimes X^*$. If $f$ is symmetric, meaning $f=f\circ\braid_{X,X}$, then the dual exterior square $\Lambda^2(X)\subseteq X\otimes X$ is closed under the Lie bracket and thus corresponds to a Lie subalgebra of $\fgl(X)=X\otimes X^*$, which we denote $\fso(X)$. Similarly if $f$ is skew-symmetric, meaning $f=-f\circ\braid_{X,X}$, then the divided square $\Gamma^2(X)\subseteq X\otimes X$ corresponds to a Lie subalgebra of $\fgl(X)$, which we denote $\fsp(X)$.

For $X\in\Ver_p$, we have $\fgl(X)\cong\fgl(X\otimes\bar\unit)$ and similarly for $\fsl$. Skew-symmetric bilinear forms on $X$ are in correspondence with symmetric bilinear forms on $X\otimes\bar\unit$, and thus $\fsp(X)\cong\fso(X\otimes\bar\unit)$. If $V$ is a preimage of $X$ in $\Tilt\SL_2$ and $\fsl(V)$ is also a tilting module, then $\fsl(X)$ is exactly the image of $\fsl(V)$, and similarly for $\fso$ and $\fsp$. Thus, by using \cite[Table~3.2.4]{CEN} to find the image of the natural representation of each classical-type Lie algebra in $\Ver_p$, we obtain the following:
\[\Lie_p(\SL_n)\cong\fsl(L_{n-1})\cong\fsl(L_{p-n-1})\cong\Lie_p(\SL_{p-n})\text{ for }p\geq n+1,\]
\[\Lie_p(\SO_{2n+1})\cong\fso(L_{2n})\cong\fsp(L_{p-2n-2})\cong\Lie_p(\Sp_{p-2n-1})\text{ for }p\geq2n+3,\]
\[\Lie_p(\SO_{2n})\cong\fso(L_0\oplus L_{2n-2})\text{ for }p\geq2n+1.\]

In \cite[\S4]{CEN}, subalgebras of $\fsl(L_{n-1})$ were classified. We now extend this to other Lie algebras of the form $\Lie_p(G)$. First, we give a more explicit description of the Lie bracket on $\fgl(L_n)$, and utilise this to obtain results in type $D$ for arbitrary rank.

\begin{lemma}~\label{LemglImage}
Let $n$ be a positive integer, and identify $\fgl(V_n)$ with $V_n^{\otimes2}$ using the bilinear form $\proj^{n,n}_0$ as in Section~\ref{SecSOSp}. If $\charr\bk=0$, then the $\incl$ and $\proj$ morphisms from Section~\ref{SecInclProj} give a decomposition $\fgl(V_n)=V_n\otimes V_n^*=V_0\oplus V_2\oplus\cdots\oplus V_{2n}$ such that the Lie bracket restricted to $V_a\otimes V_b\to V_c$ is
\[2(-1)^{n+1}\frac{c!D(\frac a2,\frac n2,\frac n2)D(\frac b2,\frac n2,\frac n2)}{a!b!(n+1)!^2D(\frac c2,\frac n2,\frac n2)}\alpha^{a,b,c}_{n,n,n}\proj^{a,b}_c\]
if $\frac{a+b-c}2$ is odd and zero otherwise. If instead $\charr\bk=p\geq n+2$, then $\fgl(V_n)$ has image $\fgl(L_n)\cong L_0\oplus L_2\oplus\cdots\oplus L_{2\min(n,p-n-2)}$ in $\Ver_p$. Then the Lie bracket restricted to the summands $L_a\otimes L_b\to L_c$ is given by the formula above whenever $(a,b,c)$ satisfies the $p$-triangle condition and $\frac{a+b-c}2$ is odd, and zero otherwise.
\end{lemma}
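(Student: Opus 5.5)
The plan is to compute the bracket directly from its definition on $X\otimes X$ with $X=V_n$ and $f=\proj^{n,n}_0$, namely
\[[,]=(1\otimes f\otimes 1)\circ(1-\braid_{X^{\otimes2},X^{\otimes2}}).\]
We restrict it along $\incl^{n,n}_a\otimes\incl^{n,n}_b$ and follow it by $\proj^{n,n}_c$. Since $\dim\Hom(V_a\otimes V_b,V_c)\le1$, Schur's lemma shows the restricted bracket is a scalar multiple of $\proj^{a,b}_c$. This scalar is found by precomposing with $\incl^{a,b}_c$. So the whole lemma reduces to evaluating two scalars: one for the identity term and one for the braiding term.

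\textbf{Identity term.} We will evaluate
\[\proj^{n,n}_c\circ(1\otimes f\otimes1)\circ(\incl^{n,n}_a\otimes\incl^{n,n}_b)\circ\incl^{a,b}_c\]
in two moves.
\begin{enumerate}
\item First apply a ``bending'' identity: $(1\otimes\proj^{n,n}_0)\circ(\incl^{n,n}_a\otimes1_{V_n})$ is a scalar multiple $\beta_a$ of $\proj^{a,n}_n$. The scalar $\beta_a$ is obtained by precomposing with $\incl^{a,n}_n$ and recognising Equation~(\ref{EqnAssocComposition}) with $r=a$, $s=0$. It is therefore the associator $\alpha^{n,n,a}_{n,n,0}$, computed from Equation~(\ref{EqnAssociator}); with $s=0$ the sum has a single term, which produces the ratio of $D$'s and $(n+1)!$ factors in the statement.
\item After this move the composite becomes a $V_a\otimes V_b\to V_c$ network through two $V_n$ strands. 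Its value is, by Equation~(\ref{EqnAssocComposition}) or its reversed form (\ref{EqnRightLeft}), an associator $\alpha^{a,b,c}_{n,n,n}$ up to normalisation. The normalisation is converted using the relations (\ref{EqnPerms}). This gives the factor $c!/(a!b!)$ and the remaining $D$-ratios.
\end{enumerate}

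\textbf{Braiding term.} We rewrite $\braid_{X^{\otimes2},X^{\otimes2}}$ restricted to $V_a\otimes V_b$ as $\braid_{V_a,V_b}$, using naturality of the braiding. Then Equation~(\ref{EqnBraidComposition}) shows it contributes $(-1)^{(a+b-c)/2}$ times the analogous scalar with $a$ and $b$ interchanged. That scalar equals the first one, because the form $f$ is symmetric or skew with sign $(-1)^n$ and each $\incl^{n,n}_a$ picks up the sign $(-1)^{n-a/2}$ under swapping. Hence the total coefficient is $(1-(-1)^{(a+b-c)/2})$ times the identity-term scalar, which is $2$ times it when $\tfrac{a+b-c}{2}$ is odd and $0$ otherwise.

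\textbf{Characteristic $p\ge n+2$.} In this case $V_n$ is tilting, so $\fgl(V_n)=V_n\otimes V_n$ is tilting too. The bracket is then a morphism in $\Tilt\SL_2$, and the functor $\Tilt\SL_2\to\Ver_p$ is braided monoidal. It sends $V_n\otimes V_n$ to $\fgl(L_n)$. The summands $V_{2k}$ with $2k>2(p-n-2)$ either fail the $p$-triangle condition for $(n,n,2k)$ or are absorbed into negligible tilting summands, so they die. Every identity used above (Equations~(\ref{EqnBraidComposition}), (\ref{EqnAssocComposition}), (\ref{EqnAssociator}), (\ref{EqnPerms})) holds over $\mZ_{(p)}$ whenever the relevant triples satisfy the $p$-triangle condition. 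The only new denominator, $(n+1)!^2$, is a $p$-adic unit because $p\ge n+2$. So the same formula holds in $\Ver_p$, and the bracket vanishes when $(a,b,c)$ fails the $p$-triangle condition because $\proj^{a,b}_c=0$ there.

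\textbf{Main obstacle.} The conceptual steps are straightforward. The expected difficulty is bookkeeping: tracking the normalisations of $\incl$ and $\proj$, the exact bending scalar with $s=0$, and the signs. These must combine into precisely the stated product of factorials and $D$-values. To organise this, the first step would be to pass to $\bk=\mC$ and use the $6j$-symbol form (\ref{Eqn6j}), where the tetrahedral symmetries make the reduction to $\alpha^{a,b,c}_{n,n,n}$ transparent. The result then transfers to characteristic $p$ via integrality over $\mZ_{(p)}$.
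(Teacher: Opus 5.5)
Your proposal is correct and follows the paper's argument. The paper also uses a single-term bending scalar $\alpha^{n,n,a}_{n,n,0}$ or $\alpha^{n,n,b}_{n,n,0}$. It rewrites one further associator as a multiple of $\alpha^{a,b,c}_{n,n,n}$ via Equation~(\ref{EqnPerms}), takes the sign $(-1)^{(a+b-c)/2}$ from Equation~(\ref{EqnBraidComposition}), and reduces to characteristic $p$ when the relevant triples satisfy the $p$-triangle condition. The only differences are cosmetic: you bend the $V_a$ factor (so your second associator comes from Equation~(\ref{EqnRightLeft})) where the paper bends the $V_b$ factor, and you get equality of the two terms from a transpose symmetry where the paper simply uses that the explicit expression is symmetric in $a$ and $b$.
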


\begin{proof}
The matrix multiplication map $(V_n\otimes V_n^*)\otimes(V_n\otimes V_n^*)\to V_n\otimes V_n^*$ is given by evaluation on the middle components. The following commutative diagram describes the restriction of this map to summands of $V_n\otimes V_n\cong V_n\otimes V_n^*$, where vertical arrows are $\proj$ morphisms:
\[\begin{tikzcd}
(V_n\otimes V_n)\otimes(V_n\otimes V_n) \arrow{d} \arrow{r}{\sim} & V_n\otimes(V_n\otimes(V_n\otimes V_n)) \arrow{d} \arrow{r}{\sim} & V_n\otimes((V_n\otimes V_n)\otimes V_n) \arrow{d}\\
(V_n\otimes V_n)\otimes V_b \arrow{d} \arrow{r}{\sim} & V_n\otimes (V_n\otimes V_b) \arrow{d} & V_n\otimes (V_0\otimes V_n) \arrow{d}{\sim}\\
V_a\otimes V_b \arrow{d} & V_n\otimes V_n \arrow{d} \arrow{r}{\alpha^{n,n,b}_{n,n,0}} & V_n\otimes V_n \arrow{d}\\
V_c \arrow{r}{\alpha^{n,n,a}_{b,c,n}} & V_c \arrow{r} & V_c
\end{tikzcd}\]
We have used Equation~(\ref{EqnRightLeft}) for the second association. For even $a,b,c$ we have
\begin{align*}
\alpha^{n,n,a}_{b,c,n}=\alpha^{b,c,a}_{n,n,n}&=(-1)^{\frac{a-c}2}\frac{c!D(\frac a2,\frac n2,\frac n2)}{a!D(\frac c2,\frac n2,\frac n2)}\alpha^{a,b,c}_{n,n,n},&\alpha^{n,n,b}_{n,n,0}&=(-1)^{n+\frac b2}\frac{D(\frac b2,\frac n2,\frac n2)}{b!(n+1)!^2}
\end{align*}
by Equations~(\ref{EqnAssociator}), (\ref{Eqn6j}) and (\ref{EqnPerms}). The braiding on $V_a\otimes V_b$ restricted to $V_c$ is $(-1)^{(a+b-c)/2}$ by Equation~(\ref{EqnBraidComposition}), so the commutator bracket restricted to $V_a\otimes V_b\to V_c$ is
\[\left(\alpha^{n,n,b}_{n,n,0}\alpha^{b,c,a}_{n,n,n}-(-1)^{\frac{a+b-c}2}\alpha^{n,n,a}_{n,n,0}\alpha^{a,c,b}_{n,n,n}\right)\proj^{a,b}_c\]
which rearranges to the result in the lemma. If $\charr\bk=p$, then for $a\in\{0,2,\dots,2n\}$, $(n,n,a)$ satisfies the $p$-triangle condition if and only if $2n+a\leq 2p-4$, so the results hold for $a\leq 2(p-n-2)$.
\end{proof}

\begin{lemma}\label{LemTypeD}
Let $n$ be an even positive integer, so that $\proj^{n,n}_0\oplus\proj^{0,0}_0$ is a symmetric non-degenerate bilinear form on $V_n\oplus V_0$. We have a decomposition $\fso(V_n\oplus V_0)\cong V_2\oplus V_6\oplus\cdots\oplus V_{2n-2}\oplus W$ with $W\cong V_n$, such that the Lie bracket is:
\begin{align*}
\text{given by Lemma~\ref{LemglImage} on }&V_a\otimes V_b\to V_c, & \alpha^{n,n,a}_{n,n,0}\proj^{a,n}_n\text{ on }&V_a\otimes W\to W,\\
-2\proj^{n,n}_a\text{ on }&W\otimes W\to V_a, & \alpha^{n,n,a}_{n,n,0}\proj^{n,a}_n\text{ on }&W\otimes V_a\to W,
\end{align*}
and zero on all other combinations of summands. The formulas above also hold for $\fso(L_n\oplus\unit)$ in $\Ver_p$, which has decomposition $L_2\oplus L_6\oplus\cdots\oplus L_{2\min(n-1,p-n-2)}\oplus W$ with $W\cong L_n$.
\end{lemma}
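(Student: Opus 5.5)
We realise $\fso(V_n\oplus V_0)$ inside $\fgl(V_n\oplus V_0)$ via the identification of Section~\ref{SecSOSp}. Write $X=V_n\oplus V_0$ with form $f=\proj^{n,n}_0\oplus\proj^{0,0}_0$, and identify $\fgl(X)$ with $X\otimes X$. This splits as
\[(V_n\otimes V_n)\oplus(V_n\otimes V_0)\oplus(V_0\otimes V_n)\oplus(V_0\otimes V_0).\]
The bracket is $(1\otimes f\otimes1)(1-\braid)$, which contracts the middle two factors. Since $f$ pairs $V_n$ only with $V_n$ and $V_0$ only with $V_0$, the bracket of two blocks can be computed block by block. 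There are three steps.

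\textbf{Step 1: the decomposition.} Since $n$ is even, the form is symmetric, so $\fso(X)=\Lambda^2(X)$. Inside $V_n\otimes V_n$, the antisymmetric part is $\bigoplus V_c$ over those $c$ with $\frac{2n-c}{2}$ odd, by Equation~(\ref{EqnBraidComposition}). As $n$ is even, these are $c\equiv 2\pmod 4$, namely $c=2,6,\dots,2n-2$. The $V_0\otimes V_0$ block contributes nothing. From the mixed blocks we take the antisymmetrisation
\[W=\{w\otimes 1-1\otimes w : w\in V_n\}\cong V_n.\]
I will normalise the isomorphism $W\cong V_n$ by $w\mapsto w\otimes1-1\otimes w$. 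The constants $\alpha^{n,n,a}_{n,n,0}$ and $-2$ in the statement should then fall out, and if they do not I will rescale $W$.

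\textbf{Step 2: the bracket, case by case.}
\begin{itemize}
\item $V_a\otimes V_b\to V_c$: these summands lie in $\fgl(V_n)\subseteq\fgl(X)$, so Lemma~\ref{LemglImage} applies directly.
\item $V_a\otimes W$: the contraction of $V_n\otimes V_n$ against the $V_n\otimes V_0$ piece yields an element of $V_n\otimes V_0$, computed by the same associator diagram as in Lemma~\ref{LemglImage} but with $b=n$ replaced by the trivial piece. This should reproduce the factor $\alpha^{n,n,a}_{n,n,0}\proj^{a,n}_n$. The contraction against the $V_0\otimes V_n$ piece vanishes, because $f$ pairs $V_n$ with $V_0$ trivially. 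Symmetrically, the braid term gives the $W$ component, and $W\otimes V_a$ then follows from antisymmetry using Equation~(\ref{EqnBraidComposition}) with $(a,n,n)$, where the sign $(-1)^{a/2}$ equals $-1$ since $a\equiv2\pmod 4$.
\item $W\otimes W$: only the terms $(w\otimes1)\otimes(1\otimes w')$ and $(1\otimes w)\otimes(w'\otimes1)$ survive, giving $w\otimes w'$ and $-w'\otimes w$ up to braiding. Projecting onto $V_a$ then gives $(1-(-1)^{(2n-a)/2})\proj^{n,n}_a=2\proj^{n,n}_a$ with the sign to be tracked. This vanishes for $a\equiv0\pmod4$, consistent with $V_a\subset\fso$, and the $V_0\otimes V_0$ component cancels. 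Sign conventions give the stated $-2$.
\item $W\otimes V_a\to V_c$ and $W\otimes W\to W$: these vanish because the number of $V_0$ tensor factors changes parity.
\end{itemize}

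\textbf{Step 3: passing to $\Ver_p$.} With $p\ge n+2$, so that $V_n$ is tilting, every morphism above is a $\proj$/$\incl$ morphism or an associator. These descend to $\Ver_p$, where summands $V_c$ with $(n,n,c)$ failing the $p$-triangle condition die. This leaves $c\le 2(p-n-2)$, which gives the stated range $2\min(n-1,p-n-2)$; the remaining formulas are unchanged.

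The main obstacle is tracking the signs and normalisation constants in the mixed $V_a\otimes W$ computation. I would do it through the associator diagram of Lemma~\ref{LemglImage}, using Equation~(\ref{EqnRightLeft}) and $\alpha^{n,n,a}_{n,n,0}$ as computed there.
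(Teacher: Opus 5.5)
Your proposal is correct and follows essentially the same route as the paper: embed $\fso(V_n\oplus V_0)$ in $\fgl(V_n\oplus V_0)$, keep the summands $V_a\subset\fgl(V_n)$ with $a/2$ odd together with the $(1,-1)$-antisymmetrised copy $W$ of $V_n$, and compute the mixed brackets from the associator $\alpha^{n,n,a}_{n,n,0}$ and the braiding signs. Your normalisation $w\mapsto w\otimes1-1\otimes w$ yields exactly $\alpha^{n,n,a}_{n,n,0}\proj^{a,n}_n$ and $-2\proj^{n,n}_a$, so no rescaling of $W$ is needed; the remaining differences are cosmetic (you get $W\otimes V_a\to W$ from antisymmetry via Equation~(\ref{EqnBraidComposition}) rather than computing $V_n^*\otimes V_a$ directly, and you justify the vanishing blocks by the block-parity grading, which the paper leaves implicit).
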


\begin{proof}
We have a natural decomposition $\fgl(V_n\oplus V_0)=\fgl(V_n)\oplus V_n\oplus V_n^*\oplus V_0$ as an $\SL_2$-module. Since the braiding on $V_a\subseteq V_n\otimes V_n$ is $(-1)^{\frac a2}$, $\fso(V_n\oplus V_0)$ consists of the summands $V_a\subseteq\fgl(V_n)$ with $\frac a2$ odd, and also a summand $W\cong V_n$ including into $V_n\oplus V_n^*$ with coefficients $(1,-1)$. Now, for summands $V_a\subseteq\fgl(V_n)$, the restrictions of the Lie bracket on $\fgl(V_n\oplus V_0)$ to $V_a\otimes V_n$ and $V_n^*\otimes V_a$ are as follows:
\begin{align*}
V_a\otimes V_n\hookrightarrow(V_n\otimes V_n^*)\otimes V_n \xrightarrow{\sim} V_n\otimes(V_n^*\otimes V_n) \xrightarrow{1\otimes\ev} V_n,\\
V_n^*\otimes V_a\hookrightarrow V_n^*\otimes (V_n\otimes V_n^*) \xrightarrow{\sim} (V_n^*\otimes V_n)\otimes V_n^* \xrightarrow{1\otimes\ev} V_n^*.
\end{align*}
Identifying $V_n$ with $V_n^*$ as in Lemma~\ref{LemglImage}, these morphisms are $\alpha^{n,n,a}_{n,n,0}\proj^{a,n}_n$ and $\alpha^{n,n,a}_{n,n,0}\proj^{n,a}_n$, and this determines the Lie bracket on $V_a\otimes W$ and $W\otimes V_a$. Meanwhile, the Lie bracket on the summands $V_n,V_n^*\subset\fgl(V_n\oplus V_0)$ is given by
\[V_n\otimes V_n^*\xrightarrow{\sim}\fgl(V_n)\text{ and } V_n^*\otimes V_n\xrightarrow{-\braid}V_n\otimes V_n^*\xrightarrow{\sim}\fgl(V_n),\]
and thus the Lie bracket restricted to $W\otimes W\to V_a$ is $-2\proj^{n,n}_a$ (since $\frac a2$ is odd).
\end{proof}

\begin{prop}\label{PropTypeD}
Let $n\geq4$ be an integer such that $\charr\bk=p\geq2n+1$, and let $\fg=\fso(L_0\oplus L_{2n-2})\cong\Lie_p(\SO_{2n})\in\Ver_p$. If $p=17$ and $n=6$ then $\fg$ has a subalgebra with underlying object $L_2\oplus L_{10}$, and this is the only proper subalgebra of $\fg$ not contained in a subalgebra $\fg'$ coming from a principal pair $\SO_{2n-1}\subset\SO_{2n}$, for all $p$ and $n$.
\end{prop}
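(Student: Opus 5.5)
Throughout, I use the decomposition of Lemma~\ref{LemTypeD} with $m=2n-2$: $\fg=L_2\oplus L_6\oplus\cdots\oplus L_{2k}\oplus W$ with $W\cong L_{m}$ and $2k=2\min(m-1,p-m-2)$. Each simple summand appears with multiplicity one, except possibly when $W$ is isomorphic to some $L_a$ with $a\equiv2\pmod4$. Since $m=2n-2$, this happens only when $n$ is even and $m\le 2k$. In the multiplicity-free situation, a subalgebra (which, being based, must contain $L_2$, the image of $\fsl(L_1)$) is a direct sum of a subset $S$ of the summands containing $L_2$. In the exceptional situation, the $L_m$-isotypic part is two-dimensional in $\Hom(L_m,\fg)$, and a subalgebra may contain a ``diagonal'' copy of $L_m$, whose coefficients have to be parametrised. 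The plan is therefore combinatorial: decide, using the explicit bracket coefficients of Lemmas~\ref{LemglImage} and \ref{LemTypeD}, which subsets (and diagonal lines) are closed under the bracket.

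First I identify the principal-pair subalgebra $\fg'\cong\Lie_p(\SO_{2n-1})\cong\fso(L_{2n-2})$. Inside $\fgl(L_m\oplus\unit)$ this is $\fso(L_m)$, namely $L_2\oplus L_6\oplus\cdots$ with $W$ omitted. Every subset $S$ with $W\notin S$ therefore lies in $\fg'$. So it suffices to classify the proper subalgebras that contain $W$, or that contain a diagonal $L_m$.

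Suppose $W\in S$. The bracket $W\otimes W\to L_a$ equals $-2\proj^{m,m}_a$, which is nonzero for every $a\equiv2\pmod 4$ with $(m,m,a)$ satisfying the $p$-triangle condition. All summands $L_a$ of $\fg$ other than $W$ satisfy this condition, since $a\le 2(p-m-2)$. Hence $[W,W]$ generates all the $L_a$, and $S$ is everything. This leaves only the diagonal case. Write the diagonal copy as $D=\{(\lambda w,\mu v)\}$, embedding $L_m$ via $\lambda$ into $W$ and via $\mu$ into $L_m\subseteq\fgl(L_m)$, with $\lambda\mu\ne0$.

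The bracket $[D,D]$ then has components in the summands $L_c$ with coefficient
\[-2\lambda^2+\mu^2\beta_c,\qquad \beta_c=\text{the Lemma~\ref{LemglImage} coefficient for }(m,m,c),\]
where $c\equiv2\pmod4$. The case $c=m$ needs care: there $[D,D]$ must land in the span of $D$ and the other chosen summands. For a proper subalgebra, all but a few of these coefficients must vanish, or else the summands they produce must be absorbed into the subalgebra. Brackets of $L_2$ and of the other included $L_a$ with $D$ must also land in $D$. This requires
\[\alpha^{m,m,a}_{m,m,0}\,\lambda\mu^{-1}\text{-compatibility, namely } \alpha^{m,m,a}_{m,m,0}=\text{(Lemma~\ref{LemglImage} coefficient for }(a,m,m)),\]
for each included $a$.

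This gives a finite system of polynomial identities in $p$ and $n$, which I expect to be expressible through the rational functions $f_u$ of Proposition~\ref{PropRational} when $a=2$ (and similarly for general $a$). Step one is to use the $L_2$ constraint alone, which is automatic by basedness, to pin down $\lambda^2/\mu^2$. Step two is to show that the surviving constraints for $a=6,10,\dots$ force a polynomial in $n$ to vanish modulo $p$. Combined with $p\ge 2n+1$, this should leave only $(p,n)=(17,6)$, that is $m=10$ and $S=\{L_2,D\}$. In that case one checks directly that $[D,D]$ has zero component in $L_6$ and in the other $L_c$, giving the subalgebra $L_2\oplus L_{10}$.

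The main obstacle is this last step: evaluating the associator coefficients in closed form for general $n$ and proving that the resulting congruences have no solutions besides $(17,6)$. My first attempt will reduce everything to $6j$-symbols with a small entry ($a=2$ or $a=6$), where $f_u$ is an explicit low-degree rational function. I will then argue via finitely many resultant computations, and treat the small $p$ by direct computer check.
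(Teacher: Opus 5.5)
Your reduction is the same as the paper's. Subalgebras avoiding $W$ lie in $\fg'=\fso(L_{2n-2})$. A subalgebra containing $W$ is all of $\fg$, because $W\otimes W\to L_a$ is $-2\proj^{2n-2,2n-2}_a\neq0$. So only a diagonal copy $D$ of $L_{2n-2}$ remains, and this exists only for $n$ even and $p\geq 3n-1$.

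The genuine gap is in the outcome you predict for the diagonal analysis, which is false. Take $n=4$ and any $p\geq 11$. The condition that the $L_6$-isotypic part of $[D,D]$ lie in $D$ has two solutions, namely $\mu^2/\lambda^2=140/9$ in your notation. Each such $D$ generates a proper subalgebra $L_2\oplus D\oplus L_{10}$ (just $L_2\oplus D$ when $p=11$), since the $L_{10}$-component of $[D,D]$ is $-\tfrac83\neq0$. This subalgebra is not contained in your fixed $\fg'$. So your system has genuine solutions at $n=4$ for every $p$, not only at $(p,n)=(17,6)$, and the proof cannot close without a further idea that accounts for them. The paper's idea is triality. $\fso_8$ has an order-three automorphism, with a description valid for $p>2$, whose fixed points are $\fg_2\cong V_2\oplus V_{10}$. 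Conjugating $\fso_7$ by this automorphism and its square gives exactly the two extra subalgebras. They therefore come from (conjugate) principal pairs $\SO_7\subset\SO_8$, which the statement allows.

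Two further steps need repair.

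\emph{What fixes the ratio.} The $L_2$ constraint cannot pin down $\lambda^2/\mu^2$. The canonical action of $L_2$ preserves every subobject, and your own compatibility condition at $a=2$ does not involve $\lambda$ or $\mu$. The ratio is fixed by the $c=m$ condition that the $L_m\oplus W$ component of $[D,D]$ be a multiple of $D$ (the paper's $b_{n-1}=b_*\lambda$).

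\emph{Why generating some $L_c$ forces everything.} You need a reason why a non-zero component of $[D,D]$ in some $L_c$ with $c\neq 2,m$ makes the generated subalgebra all of $\fg$. The paper imports this from the proof of [CEN, Proposition~4.2.2]: each of $L_6,L_{10},L_{14}$ generates all of $\fso(L_{2n-2})$. In particular it generates the summand $L_{2n-2}\subseteq\fg'$, which together with $D$ gives $W$. This reduces everything to the simultaneous vanishing of $b_3,b_5,b_7$. Elimination then excludes $n=8$ and $n\geq10$, and for $n=6$ leaves only $p=17$. Your substitute, the compatibility equation for $[L_a,D]$, gives a contradiction only when it fails. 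You would have to show it fails for $a=6,10,14$, or treat every mixed case of the form ``$b_u=0$ or compatibility at $2u$''. The $n=4$ example shows compatibility can genuinely hold (at $a=10$), so this cannot be assumed.
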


\begin{proof}
Use the decomposition $\fg=L_2\oplus L_6\oplus\cdots\oplus L_{2\min(n-1,p-n-2)}\oplus W$ from Lemma~\ref{LemTypeD}. Let $N=L_{2n-2}$ be the subobject of $\fg$ whose inclusion is $\id$ on $W$ and $\lambda\incl^{2n-2,2n-2}_{2n-2}$ on $\fg'$ for a fixed $\lambda\in\bk$. Since $\fg=\fg'\oplus W$, we aim to find all cases where $N$ does not generate $\fg'$ inside $\fg$. If $\lambda=0$ then the Lie bracket restricted to $N\otimes N\to\fg'$ is $-2\proj^{2n-2,2n-2}_a$ on each summand $L_a$, which is always non-zero, meaning $N$ generates $\fg'$. So we can assume $\lambda\neq0$, and in particular $\fg'$ has a summand with weight $2n-2$. This forces $n$ to be even and $(2n-2,2n-2,2n-2)$ to satisfy the $p$-triangle condition, that is $p\geq3n-1$. By Lemma~\ref{LemTypeD} and Proposition~\ref{PropRational}, for $u\in\{1,3,\dots,2n-3\}$ the Lie bracket restricted to $N\otimes N\to L_{2u}$ is $b_u\proj^{2n-2,2n-2}_{2u}$ where
\[b_u=\binom{2u}{u}\frac{(2n-2)_u}{(2n+u-1)_u}f_u(n-1,n-1,n-1)\mu-2,\quad \mu=2(-1)^n\frac{(n-1)!^3(3n-2)!}{(2n-1)!^3}\lambda^2\]
using the falling factorial notation from Proposition~\ref{PropRational}. Meanwhile, the Lie bracket restricted to $N\otimes N\to W$ is $b_*\proj^{2n-2,2n-2}_{2n-2}$ where
\[b_*=2\alpha^{2n-2,2n-2,2n-2}_{2n-2,2n-2,0}\lambda=-2(-1)^n\frac{(2n-1)(n-1)!^3(3n-2)!}{(2n-1)!^3}\lambda\]
Note that the image of $N\otimes N\to(V_{2n-2}\oplus W)$ is exactly $N$ if and only if $b_{n-1}=b_*\lambda$. If this does not hold, then $N$ generates $W$ which generates $\fg'$, so we require $b_{n-1}=b_*\lambda$.

First suppose $n=4$, meaning $\fg$ is the image of $\fso_8=V_2\oplus V_6^2\oplus V_{10}$. We can compute that $b_{n-1}=b_*\lambda$ if and only if $\lambda^2=\frac{140}9$, and if this holds then $b_5=-\frac 83\neq0$ for any $p>8$. Thus we potentially have 3 subalgebras of $\fg$ with objects $L_2\oplus L_6\oplus L_{10}$ (or $L_2\oplus L_6$ if $p=11$) corresponding to $\lambda=0$ and the two square roots of $\frac{140}9$. However, these are expected: $\fso_8$ has a triality automorphism $\phi$ with fixed points $\fg_2\cong V_2\oplus V_{10}$, and this has a combinatorial description which is well-defined for $p>2$ \cite{MW}. Since $\phi$ has order three, $\fso_7\subset\fso_8$ can be conjugated by $\phi$ and $\phi^2$ to obtain 3 distinct subalgebras with $\SL_2$ decompositions $V_2\oplus V_6\oplus V_{10}$, and so these must correspond to the Lie subalgebras found above. All of these come from conjugates of the original principal pair $\SO_{2n-1}\subset\SO_{2n}$.

From now on assume $n\geq6$. Since $\fg'=\fso(L_{2n-2})\subseteq\fsl(L_{2n-2})$, we can utilise the proof of \cite[Proposition~4.2.2]{CEN} to determine which summands generate each other going forward. In particular, each of $L_6$, $L_{10}$, $L_{14}$ generates $\fg'$ (the exceptional case with $p=23$ needs either $2n-1$ or $p-2n+1$ to equal 10, which cannot happen for even $n\geq6$).

If $n=6$, then $b_{n-1}=b_*\lambda$ if and only if $\lambda^2=\frac{8316}{41}$, and if this holds then $b_3=-\frac{34}{41}$. This is only zero modulo $p$ if $p=17$, and in this case the summands $L_{14}\oplus L_{18}$ are absent, so $L_2\oplus L_{10}$ forms a subalgebra as required. If instead $n=8$, then $b_{n-1}=b_*\lambda$ if and only if $\lambda^2=\frac{8700120}{7321}$, and if this holds then $b_3=-\frac{7080}{7321}$ and $b_5=-\frac{17160}{7321}$. The numerators of these share no prime factors above 5, so at least one is non-zero and $N$ generates $\fg'$.

Finally suppose $n\geq10$, so $p\geq3n-1\geq29$. We have
\[b_3=\frac{(2n-1)(7n^2-7n+6)}{4(2n+1)(2n-3)}\mu-2,\text{ so }b_3=0\iff\mu=\frac{8(2n+1)(2n-3)}{(2n-1)(7n^2-7n+6)}.\]
Substituting $\mu$ into $b_5$ and $b_7$ gives rational functions in $n$, which one can directly compute:
\begin{align*}
b_5&=-\frac{45n(n-1)(3n^2-3n+8)}{2(2n+3)(2n-5)(7n^2-7n+6)},\\
b_7&=-\frac{33n(n-1)(41n^4-82n^3-671n^2+712n-1260)}{4(2n+3)(2n+5)(2n-5)(2n-7)(7n^2-7n+6)}.
\end{align*}
We require $7n^2-7n+6\neq0$ so that $\mu$ is well-defined, meaning the only terms in these expressions which could be zero modulo $p$ are
\[h_1(n)=3n^2-3n+8\text{ and }h_2(n)=41n^4-82n^3-671n^2+712n-1260.\]
We have $(2464+123n-123n^2)h_1(n)+9h_2(n)=8372$ whose prime factors are all less than 29, so at least one of $b_5$ or $b_7$ is non-zero and thus $N$ generates $\fg'$.
\end{proof}

\begin{theorem}\label{ThmSubalgs}
Let $\bk$ be algebraically closed with $\charr\bk=p\geq5$, and let $\fg=\Lie_p(G)$ for a simple algebraic group $G$ with Coxeter number $h<p$. Then every proper non-zero Lie subalgebra $\fg'\subset\fg$ is one of the following:
\begin{enumerate}
\item\label{Item1P} $\fg'$ is the image of $\Lie(G')$ for some characteristic $p$ reduction of an ordinary principal pair $G'\subset G$ as in \cite[Theorem~6.4]{EO}.
\item\label{Item1D} $\fg'=L_2\oplus L_{p-3}\cong\fso(L_0\oplus L_{p-3})\cong\Lie_p(\SO_{p-1})$ where $G$ is any of these types:
\begin{itemize}
\item $A_{(p-3)/2}$, $A_{(p-1)/2}$ for $p\geq11$,
\item $B_{(p-1)/4}$, $C_{(p-1)/4}$, $D_{(p+3)/4}$ for $p\geq13$ such that $4$ divides $p-1$,
\item $E_6$ for $p\in\{17,19\}$, $E_7$ for $p\in\{29,37\}$, $E_8$ for $p\in\{41,61\}$.
\end{itemize}
\item\label{Item1G} $p=17$, $G$ is type $D_6$, and $\fg'=L_2\oplus L_{10}\cong\Lie_{17}(G_2)$.
\item\label{Item1E7} $p=23$, $G$ is one of the types $A_9,A_{12},B_6,C_5$ or $D_7$, and $\fg'=L_2\oplus L_{14}\cong\Lie_{23}(E_7)$.
\item\label{Item1E8} $p=29$, $G$ is type $E_7$, and $\fg'=L_2\oplus L_{14}\oplus L_{26}\cong\Lie_{29}(E_8,a_1)$.
\end{enumerate}
\end{theorem}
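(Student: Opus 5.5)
We go type by type through the possible simple $G$ with $h<p$. For each type we first write down the underlying object of $\fg=\Lie_p(G)$ in $\Ver_p^+$: it is $\bigoplus_i L_{2m_i}$, with $m_i$ the exponents of $G$, truncated to the summands with $2m_i\leq p-3$. A subalgebra $\fg'$ is a direct sum of some of these summands, up to the choice of embedding when a simple object occurs with multiplicity, as it does in type $D_n$ with $n$ even.

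The key tool is a generation relation. A summand $L_a\subseteq\fg$ \emph{generates} $L_c$ if the bracket component $L_a\otimes L_a\to L_c$, or an iterated bracket landing in $L_c$, is non-zero. These components are, up to nonzero scalars, $\alpha$-coefficients, and we compute them with Proposition~\ref{PropRational} and the explicit structure constants.

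\textbf{Classical types.} For type $A$ we invoke the classification of subalgebras of $\fsl(L_{n-1})$ in \cite[\S4]{CEN}. Its list should give (1), the $\fso(L_0\oplus L_{p-3})$ cases in (2), and the $L_2\oplus L_{14}$ case at $p=23$ in (4). For types $B$ and $C$ we use $\fso(L_{2n})\cong\fsp(L_{p-2n-2})\subseteq\fsl(L_{2n})$. Every subalgebra of $\fso(L_{2n})$ is then a subalgebra of $\fsl$ from that list contained in $\fso$, and we check containment by the parity of the summands. The isomorphisms $\Lie_p(\SO_{2n+1})\cong\Lie_p(\Sp_{p-2n-1})$ and $\Lie_p(\SL_n)\cong\Lie_p(\SL_{p-n})$ explain why the exceptional types in (2) and (4) occur in the pairs listed. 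For type $D$ we use Proposition~\ref{PropTypeD}, which reduces everything to subalgebras of $\fso(L_{2n-2})$ (type $B$, already done) and produces the extra $G_2$ case (3). We also treat the non-generic summand $W$ there, noting that $W\cong L_{2n-2}$ alone generates the whole type $B$ part. We identify each surviving subalgebra with a principal pair from \cite[Theorem~6.4]{EO}, with $\Lie_p(\SO_{p-1})$, or with $\Lie_{23}(E_7)$ by matching underlying objects and invoking the uniqueness results for small length (Theorems~\ref{ThmLength2},~\ref{ThmLength3}).

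\textbf{Exceptional types.} $G_2$, $F_4$, $E_6$, $E_7$ and $E_8$ have exponents with multiplicity one, except $E_6$ in some cases, so $\fg$ is multiplicity free or nearly so. Since $\fg'$ must contain $L_2$ (it is based), it is determined by a subset $S$ of exponents that is closed under generation. For each type and each prime $h<p$ below the bound where the truncation stabilises, we compute the generation graph. For $p$ large we show that the relevant $\alpha$-values are nonzero rational functions of $p$, after putting the needed brackets into closed form. Two facts about $\Ver_p$ do most of the work: $L_2$ acts canonically and hence maps each summand to itself, and the only way a bracket $L_a\otimes L_b\to L_c$ vanishes is through the $p$-triangle condition or a zero of the structure constant modulo $p$. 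So for each pair of summands we determine the primes at which the corresponding structure constant vanishes, and closed subsets other than the principal-pair ones can only appear at finitely many small primes. These we enumerate by computer, finding $L_2\oplus L_{p-3}$ for the listed $E$-type primes and $L_2\oplus L_{14}\oplus L_{26}$ in $E_7$ at $p=29$. For the latter we identify the subalgebra with $\Lie_{29}(E_8,a_1)$ by comparing structure constants; the $E_8$ side presumably comes from Remark~\ref{RemE8a1}.

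The main obstacle is this exceptional-type step. The structure constants of $\Lie(E_n)$ in the $\SL_2$-adapted basis are not given by a single $6j$-formula as in the classical cases. We therefore either compute them from an explicit model, such as $E_8$ as $\fso_{16}\oplus$ half-spin restricted principally, or derive them from Jacobi-identity constraints among the multiplicity-free summands. We also have to control every prime uniformly, so we aim to express the obstruction to closure as finitely many integers whose prime factors are bounded, as in the $n\geq10$ step of Proposition~\ref{PropTypeD}.
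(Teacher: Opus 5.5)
Your overall route is the paper's: the classification of subalgebras of $\fsl(L_{n-1})$ for type $A$, and the containment $\fso(L_{2n})\subseteq\fsl(L_{2n})$ checked by parity for types $B$ and $C$. Type $D$ goes through Proposition~\ref{PropTypeD}, the exceptional types through explicitly computed bracket constants with lists of ``bad primes'', and the isomorphisms through Theorems~\ref{ThmLength2} and~\ref{ThmLength3}. (The paper only shortcuts $F_4$ and $G_2$ via the principal pairs $F_4\subset E_6$ and $G_2\subset\SL_7$; treating them directly is fine.) However, two steps of your exceptional-type argument fail as written.

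First, the underlying object of $\Lie_p(G)$ is not $\bigoplus_i L_{2m_i}$ truncated to $2m_i\leq p-3$. The principal restriction of $\Lie(G)$ is a tilting module with character $\sum_i\chi(2m_i)$. Here $V_{p-1}$ is negligible. Each $\chi(2m)$ with $p\leq 2m\leq 2p-4$ combines with $\chi(2p-2-2m)$ into the negligible tilting module $T(2m)$, so the partner summand $L_{2p-2-2m}$ disappears as well. For example:
\begin{itemize}
\item $\Lie_{23}(E_7)=L_2\oplus L_{14}$, not $L_2\oplus L_{10}\oplus L_{14}\oplus L_{18}$;
\item $\Lie_{37}(E_8)=L_2\oplus L_{22}$;
\item $\Lie_{29}(E_7)$ has no $L_{22}$, since $V_{22}$ cancels against $V_{34}$.
\end{itemize}
These small primes are exactly where cases (2) and (5) arise, so your generation graphs would be computed on the wrong objects precisely where it matters. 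You need the tilting-character algorithm to find the summands. You also need a check, as in the paper, that the rational inclusion and projection maps of the \emph{surviving} summands have well-defined reductions mod $p$. Separately, Jacobi constraints alone cannot replace an explicit model, because they do not single out $\Lie(E_n)$ among the Lie structures on a given object.

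Second, ``$\fg'$ must contain $L_2$ (it is based)'' is unjustified. A subalgebra of a based Lie algebra need not contain the canonical $L_2$. The theorem asserts in particular that no nonzero subalgebra avoids it (for example, an abelian summand $L_a$), and this must be proved. The paper gets it from its tables, which include brackets such as $L_{14}\otimes L_{14}\to L_2$ and chains like $L_8\to L_{14}\to L_2$ in $E_6$ at $p=17$, so that every summand generates a listed subalgebra.

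A uniform fix is available. For exceptional $G$ and $p>h$, the Killing form of $\Lie(G)$ is nondegenerate by the argument in Proposition~\ref{PropHQ}. It therefore descends to a nondegenerate invariant form $\kappa$ on the multiplicity-free $\fg$, which is nonzero on every $L_a\otimes L_a$. Restrict the identity $\kappa\circ([,]\otimes\id)=\kappa\circ(\id\otimes[,])$ to $L_a\otimes L_a\otimes L_2$. Since $L_2$ acts non-trivially on $L_a$, the right side is nonzero, which forces $L_a\otimes L_a\to L_2$ to be nonzero.

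The same fact shows that $L_2\oplus L_{14}\oplus L_{26}\subset\Lie_{29}(E_7)$ is simple and based. You need that in order to identify it with $\Lie_{29}(E_8,a_1)$ through the uniqueness in Theorem~\ref{ThmLength3}, which is simpler than comparing structure constants.
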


\begin{proof}
Using $\Lie_p(\SL_n)\cong\fsl(L_{n-1})$ for $n<p/2$ and $\fsl(L_{p-n-1})$ for $n>p/2$, \cite[Proposition~4.2.2]{CEN} classifies subalgebras in type $A$; in particular, the subalgebras (b), (c), (d) in that proposition fall under case (\ref{Item1P}), while (e) and (f) fall under (\ref{Item1D}) and (\ref{Item1E7}). Moreover, (b) is exactly the subalgebra $\fso(L_{n-1})$ or $\fsp(L_{n-1})$ as in Section~\ref{SecSOSp} depending on whether $n$ is odd or even, so this also covers types $B$ and $C$. Proposition~\ref{PropTypeD} extends this to type $D$ giving case (\ref{Item1G}). Now, \cite[Theorem~6.4]{EO} states that $F_4$ is contained in $E_6$, while $G_2$ is contained in $A_6$ for $p>7$ and $\Lie_p(G_2)$ is trivial for $p=7$, so classifying subalgebras in type $E$ will complete the theorem. Firstly, the summands of $\Lie_p(E_n)$ for each $p$ and $n$ can be determined using the algorithm in \cite[\S3.2.3]{CEN}, and we state these (excluding the trivial $p=h+1$ case) along with all of the proper non-trivial subalgebras described in the theorem:

\vspace{0.5em}{\noindent\footnotesize
\begin{minipage}{0.5\textwidth}\centering
\begin{tabular}{r|l|l}
$p$ & $\Lie_p(E_6)$ summands & subalgebras \\\hline
$17$ & $2,8,14$ & $(2),(2,14)$\\
$19$ & $2,8,10,16$ & $(2),(2,10),(2,16)$\\
$23$ & $2,8,10,14,16$ & $(2),(2,10,14)$\\
$\geq29$ & $2,8,10,14,16,22$ & $(2),(2,10,14,22)$
\end{tabular}\vspace{0.5em}\\
\begin{tabular}{r|l|l}
$p$ & $\Lie_p(E_7)$ summands & subalgebras \\\hline
$23$ & $2,14$ & $(2)$\\
$29$ & $2,10,14,18,26$ & $(2),(2,26),(2,14,26)$\\
$31$ & $2,10,14,18,22$ & $(2)$\\
$37$ & $2,10,14,18,22,26,34$ & $(2),(2,34)$\\
$\geq41$ & $2,10,14,18,22,26,34$ & $(2)$
\end{tabular}
\end{minipage}\hfill
\begin{minipage}{0.49\textwidth}\centering
\begin{tabular}{r|l|l}
$p$ & $\Lie_p(E_8)$ summands & subalgebras \\\hline
$37$ & $2,22$ & $(2)$\\
$41$ & $2,14,26,38$ & $(2),(2,38)$\\
$43$ & $2,14,22,34$ & $(2)$\\
$47$ & $2,14,22,26,38$ & $(2)$\\
$53$ & $2,14,22,26,34,38$ & $(2)$\\
$59$ & $2,14,22,26,34,38,46$ & $(2)$\\
$61$ & $2,14,22,26,34,38,46,58$ & $(2),(2,58)$\\
$\geq67$ & $2,14,22,26,34,38,46,58$ & $(2)$
\end{tabular}
\end{minipage}}\vspace{0.5em}

To describe the restriction of the Lie bracket to summands $L_a\otimes L_b\to L_c$, we report the results of a SageMath program performing the following computations. By finding highest weight vectors of a principal $\fsl_2$-triple and applying lowering operators, one can obtain a basis of $\Lie(G)$ from which projection and inclusion maps for the $\SL_2$-module summands $V_a\subseteq\Lie(G)$ can be computed over $\mQ$. It can then be verified that these have well-defined characteristic $p$ reductions whenever $L_a$ is a summand of $\Lie_p(G)$. For summands $L_a,L_b,L_c$ of $\fg$, one can compute the Lie bracket on $\fg$ restricted to $L_a\otimes L_b\to L_c$ by first computing the Lie bracket on $\Lie(G)$ over $\mQ$ restricted to $V_a\otimes V_b\to V_c$ in the form $\beta\proj^{a,b}_c$, and then reducing $\beta$ modulo $p$. For $E_7$ with $p=29$, direct computation shows that this restriction is zero for $a,b\in\{2,14,26\}$ and $c\not\in\{2,14,26\}$, and hence case (\ref{Item1E8}) is a true subalgebra. Lastly, we give a selection of bracket restrictions $L_a\otimes L_b\to L_c$ which are non-zero in $\Lie(G)$ in characteristic zero, along with the ``bad primes'' $p$ for which their restrictions are zero in $\fg$ (either because $\beta=0$ modulo $p$ or because one of $L_a,L_b,L_c$ is not a summand of $\fg$). One can quickly see, using these morphisms, that each simple summand generates the minimal subalgebra containing it in every case, so no other subalgebras are possible.

\vspace{0.5em}{\footnotesize\centering\noindent
\begin{tabular}{l|l}
$E_6$ bracket & bad primes\\\hline
$14\otimes14\to2$ & $19$\\
$8\otimes8\to14$ & $19$\\
$10\otimes10\to2$ & $17$\\
$16\otimes16\to2$ & $17$\\
$8\otimes8\to10$ & $17$\\
$8\otimes10\to16$ & $17$\\
$10\otimes10\to14$ & $17,19$\\
$14\otimes14\to10$ & $17,19$\\
$16\otimes16\to10$ & $17,19$\\
$10\otimes16\to8$ & $17$\\
$14\otimes14\to22$ & $17,19,23$\\
$22\otimes22\to10$ & $17,19,23$
\end{tabular}\hfill
\begin{tabular}{l|l}
$E_7$ bracket & bad primes\\\hline
$14\otimes14\to2$ & none\\
$26\otimes26\to2$ & $23,31$\\
$14\otimes14\to26$ & $23,31$\\
$10\otimes10\to14$ & $23$\\
$10\otimes10\to18$ & $23$\\
$18\otimes18\to10$ & $23,71$\\
$26\otimes26\to10$ & $23,29,31$\\
$14\otimes14\to22$ & $23,29$\\
$22\otimes22\to10$ & $23,29,83$\\
$22\otimes22\to18$ & $23,29,31$\\
$34\otimes34\to2$ & $23,29,31$\\
$18\otimes18\to34$ & $23,29,31$\\
$34\otimes34\to10$ & $23,29,31,37$
\end{tabular}\hfill
\begin{tabular}{l|l}
$E_8$ bracket & bad primes\\\hline
$22\otimes22\to2$ & $41$\\
$38\otimes38\to2$ & $37,43$\\
$14\otimes14\to26$ & $37,43$\\
$26\otimes26\to14$ & $37,43$\\
$14\otimes26\to38$ & $37,43$\\
$14\otimes14\to22$ & $37,41$\\
$22\otimes22\to14$ & $37,41$\\
$14\otimes22\to34$ & $37,41,47$\\
$34\otimes34\to14$ & $37,41,47,71$\\
$38\otimes38\to14$ & $37,41,43$\\
$34\otimes34\to38$ & $37,41,43,47,53$\\
$26\otimes26\to46$ & $37,41,43,47,53$\\
$46\otimes46\to14$ & $37,41,43,47,53$\\
$58\otimes58\to2$ & $37,41,43,47,53,59$\\
$22\otimes38\to58$ & $37,41,43,47,53,59$\\
$58\otimes58\to14$ & $37,41,43,47,53,59,61$
\end{tabular}
}\vspace{0.5em}

The isomorphisms in cases (\ref{Item1D}), (\ref{Item1G}), (\ref{Item1E7}), (\ref{Item1E8}) may be checked with direct computations. However, for the sake of brevity, we will simply invoke Theorems~\ref{ThmLength2} and \ref{ThmLength3} from later in the paper (note that in each case $\fg'$ has no proper non-trivial subalgebras and contains $L_2$, and is thus simple and based). These do not rely on Theorem~\ref{ThmSubalgs}, so this is not circular.
\end{proof}

\begin{remark}\label{RemE8a1}
In the proof of Theorem~\ref{ThmLength3}, we show that $L_2\oplus L_{14}\oplus L_{26}\in\Ver_{29}^+$ has a unique simple based Lie algebra structure, and so the exceptional subalgebra $\fg'$ of $\Lie_p(E_7)$ in Theorem~\ref{ThmSubalgs} must be isomorphic to this Lie algebra. One can observe from \cite[Table~10]{St} that $\fg'\cong\Lie_{29}(E_8,a_1)$, where $a_1$ corresponds to the nilpotent in $\Lie(G)$ given in \cite[Table~8]{St}. It is not clear whether $\Rep_{\Ver_{29}^+}(\fg',\phi)$ is equivalent to the semisimplification of $\Tilt E_8$, in a similar manner to the results of \cite{BEEO} for $\GL_n$. However, since $\fg'$ appears as a subalgebra of the Lie algebra $\fg$ coming from $E_7$ by Theorem~\ref{ThmSubalgs}, we do know that $\fg'$ is \textbf{linearly reductive}, meaning $\Rep(\fg',\phi)$ is semisimple, by the following argument. By Corollary~\ref{CorBased}, there are affine group schemes $G,G'$ over $\Ver_{29}$ such that $\Rep(\fg,\phi)\simeq\Rep(G,\phi)$ and $\Rep(\fg',\phi)\simeq\Rep(G',\phi)$ for appropriate constraints $\phi$. The monomorphism $\fg'\hookrightarrow\fg$ corresponds to a monomorphism $G'\hookrightarrow G$, and thus the restriction functor $F:\Rep(\fg,\phi)\to\Rep(\fg',\phi)$ is surjective in the sense of \cite{incompressible}. We know $\Rep(\fg,\phi)\simeq\Ver_{29}(E_7)$ is semisimple, and thus $\Rep(\fg',\phi)$ is semisimple by \cite[Example~3.3(ii),(iii)]{ftc}.
\end{remark}

\begin{remark}
Theorem~\ref{ThmSubalgs} gives a complete answer to \cite[Question~6.5]{EO}, asking about surjective tensor functors out of the category $\Ver_p(G)$ for simple algebraic groups $G$. By Proposition~\ref{PropVerpG}, surjective tensor functors out of $\Ver_p(G)$ correspond to subgroups of the affine group scheme corresponding to the Harish-Chandra pair $(Z,\fg)$ where $Z$ is the centre of $G$. This is equivalent to pairs $(Z',\fg')$ with $Z'\subseteq Z$ a subgroup and $\fg'\subseteq\fg$ a Lie subalgebra, and the latter is characterised by Theorem~\ref{ThmSubalgs} (and \cite[Proposition~4.3.2]{CEN} for isomorphisms). Up to autoequivalences and for large enough $p$, the only negative answer to \cite[Question~6.5]{EO} is given by Theorem~\ref{ThmSubalgs}(\ref{Item1D}).
\end{remark}

\subsection{Lie superalgebras and exceptional cases}\label{SecSuper} A \textbf{Lie superalgebra} in a symmetric tensor category $\cD$ is a Lie algebra in $\cD\boxtimes\sVec$. If there is a canonical inclusion $F:\sVec\hookrightarrow\cD$, for instance if $\cD=\Ver_p$, then any Lie superalgebra $X\boxtimes\unit\oplus Y\boxtimes\bar\unit$ can be converted into a Lie algebra structure on $X\oplus Y\otimes F(\bar\unit)$. This gives another method for producing Lie algebras in $\Ver_p$; for example, recall from \cite[Remark~4.3.3]{CEN} that the Lie algebra $\Lie_p(\SO_{2n})$ can also be interpreted as an image of $\fosp(1|p-2n+1)$. Non-principal restrictions can give other isomorphisms, see for example cases (\ref{Item4B}) and (\ref{Item4D}) in Section~\ref{SecComp}.

All of the Lie algebras appearing in Theorem~\ref{ThmMain} arise from ordinary simple algebraic groups. However, in Section~\ref{SecComp} we state the results of a computational search for length 4 simple based Lie algebras, and find two instances which require Lie superalgebras instead. Proposition~\ref{PropHQ} shows that these give a negative answer to \cite[Question~4.6]{asymptotic}, asking whether all invariantless simple Lie algebras in $\Ver_p$ come from ordinary Lie algebras. First, let us describe the constructions of these Lie algebras:

\begin{enumerate}
\item The strange Lie superalgebra $\mathfrak{psq}_3$ (Kac type $Q(2)$) has even part $\fsl_3$ and odd part the adjoint representation of $\fsl_3$. Taking $\phi:\fsl_2\to\mathfrak{psq}_3$ to be a principal map into the even part, we obtain a Lie superalgebra in $\Ver_p$ with even part $L_2\oplus L_4$ and odd part $L_2\oplus L_4$. Tensoring the odd part by $\bar\unit$ gives a simple based Lie algebra structure on $L_2\oplus L_4\oplus L_{p-6}\oplus L_{p-4}$ for $p\geq7$.
\item For a positive integer $n$, write $W(n)$ for the superalgebra of derivations of $\Lambda(\bk^n)$, and write $\widetilde{H}(n)$ for the image of the map $\Lambda(\bk^n)\to W(n)$ given by $f\mapsto\{f,-\}$ where $\{,\}$ is induced by the standard nondegenerate symmetric bilinear form on $\bk^n$ (see \cite[\S 3.3.2]{Kac}). This map has kernel $\Lambda^0(\bk^n)$. The Cartan type Hamiltonian Lie superalgebra $H(n)$ is the derived subalgebra of $\widetilde H(n)$, or equivalently the image of $\bigoplus_{i=1}^{n-1}\Lambda^i(\bk^n)$. Now, the restriction of the natural 5-dimensional representation of $\SO_5$ to $\SL_2$ via a principal morphism is $V_4$, giving an inclusion $\fso(V_4)\cong\Lambda^2(\bk^5)\hookrightarrow H(5)$. This turns $H(5)$ into a Lie superalgebra in $\Rep\SL_2$ with underlying object
\[\bigoplus_{i=1}^4\Lambda^i(\bk^5)\boxtimes\bar\unit^{\otimes i}\cong V_4\boxtimes\bar\unit\oplus(V_2\oplus V_6)\boxtimes\unit\oplus(V_2\oplus V_6)\boxtimes\bar\unit\oplus V_4\boxtimes\unit.\]
This is a tilting module, and for $p=7$ its image in $\Ver_p$ is a Lie superalgebra with even part $L_2\oplus L_4$ and odd part $L_2\oplus L_4$, which can then be turned into a simple based Lie algebra structure on $L_1\oplus L_2\oplus L_3\oplus L_4$. Note that this has the same underlying object as the $Q(2)$ image, but it is not isomorphic: since the Poisson bracket has degree $-2$ with respect to the grading on $\Lambda(\bk^n)$, the bracket map $L_4\otimes L_4\to L_2$ is zero, while this map is non-zero for $Q(2)$ in which the even part is $\Lie_p(\SL_3)$.
\end{enumerate}

The \textbf{Killing form} $\kappa$ on a Lie algebra $\fg$ in a symmetric tensor category is the composition
\[\fg\otimes\fg\xrightarrow{1\otimes1\otimes\coev}\fg\otimes\fg\otimes\fg\otimes\fg^*\xrightarrow{[,[,]]\otimes1}\fg\otimes\fg^*\xrightarrow{\ev}\unit,\]
and this is preserved by monoidal functors. In $\sVec$, we have $\kappa(x,y)=\operatorname{str}(\ad(x)\circ\ad(y))$ where $\operatorname{str}$ is the supertrace, defined as the trace of the restriction to even components minus the trace of the restriction to odd components. We shall use this to prove the following proposition:

\begin{prop}\label{PropHQ}
The Lie algebra structures on $L_2\oplus L_4\oplus L_{p-6}\oplus L_{p-4}\in\Ver_p$ arising from $Q(2)$ for $p\geq7$ and $H(5)$ for $p=7$ have zero Killing form, and are not isomorphic to $\Lie_p(G,\phi)$ for any $G$ or $\phi$.
\end{prop}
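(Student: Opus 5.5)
The plan has two parts: first show the Killing forms vanish, then use this to rule out any isomorphism with $\Lie_p(G,\phi)$.

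For the vanishing, note that the Killing form is preserved by monoidal functors, and so is the construction converting a Lie superalgebra into a Lie algebra in $\Ver_p$. Both structures are images of Lie superalgebras in $\Tilt\SL_2\boxtimes\sVec$ under the functors of Section~\ref{SecSuper}, so it suffices to show $\kappa=0$ upstairs. For $Q(2)=\mathfrak{psq}_3$ I would argue in characteristic zero, or over $\mZ_{(p)}$, which is enough since the structure constants reduce mod $p$. The Killing form is invariant and even, so it is a pairing on $\fsl_3$ plus a pairing on the odd copy of $\fsl_3$. On even $x,y$ we have $\operatorname{str}(\ad x\,\ad y)=\operatorname{tr}_{\fsl_3}-\operatorname{tr}_{\fsl_3}=0$, because the odd part is the adjoint module. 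An invariant form pairing the odd part with itself must be odd-supersymmetric, and invariance under the odd bracket relates it to the even part; concretely one checks $\kappa(\bar x,\bar y)=\kappa([\bar x,\bar z],\ldots)$ through even elements, which forces it to vanish. This is the standard fact that $Q(n)$ has zero Killing form. For $H(5)$, I would use the $\mZ$-grading by $\Lambda^i$ shifted so that the bracket has degree $0$, i.e. degree $i-2$. Then $\kappa$ pairs degree $d$ with degree $-d$, and $\ad$ of the degree-$0$ part $\fso_5$ acts on each graded piece. The supertrace contributions from $\Lambda^i$ alternate in sign and cancel; this is the known vanishing of the Killing form on Cartan type $H(n)$. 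Alternatively, $\kappa$ restricted to degree $0$ equals $\sum_i(-1)^i\operatorname{tr}_{\Lambda^i}$, which is proportional to $\sum_i(-1)^i\binom{3}{i-1}$-type index sums and vanishes. I would verify the needed pairings explicitly for $n=5$ at $p=7$.

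For the non-isomorphism, I would show that every $\Lie_p(G,\phi)$ with the same object has nonzero Killing form. Since $\Lie_p(G,\phi)$ is the image of $\Lie(G)$ in characteristic $p$, its Killing form is the image of the Killing form of $\Lie(G)$. The summand $L_2$ is the image of the $\fsl_2$-triple via $\phi$, so it suffices to check that $\kappa(e,f)\neq0$ modulo $p$ for the triple. On $\Lie(G)=\bigoplus V_{a_i}$, the contribution of each summand to $\kappa$ on $\fsl_2$ is the Dynkin index: $\operatorname{tr}(h^2)$ on $V_a$ equals $a(a+1)(a+2)/3$. Summands $V_a$ with $a\geq p-1$ that die in $\Ver_p$ are projective for $\alpha_p$ and contribute trace zero mod $p$, so the restriction of the Killing form is computed by the surviving summands $L_2,L_4,L_{p-6},L_{p-4}$. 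The sum $\sum a(a+1)(a+2)/3$ over $a\in\{2,4,p-6,p-4\}$ gives $8+40+(-6)(-5)(-4)/3+(-4)(-3)(-2)/3=8+40-40-8=0$ mod $p$. This shows the obvious invariant does not distinguish the two, so I instead use the full Killing form of $\Lie(G)$: the Killing form of a simple $\Lie(G)$ is nondegenerate for $p$ good and not dividing $2h^\vee$, or is a nonzero multiple of the normalized invariant form. Its image in $\Ver_p$ is then nonzero on some summand, for instance $L_4\otimes L_4\to\unit$.

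The main obstacle is this final step: making sure that, for whatever $G,\phi$ might yield this object, the image Killing form is nonzero. If $p$ divides $2h^\vee$, the Killing form itself may vanish. I would try first to bound $\dim G$ via the tilting decomposition, which restricts the possible $G$ to a finite list, and compute case by case. Failing that, I would distinguish the structures by another invariant, such as the nonvanishing of the bracket $L_{p-4}\otimes L_{p-4}\to L_2$ versus $\fsl_3$-type behaviour.
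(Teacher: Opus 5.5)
\textbf{Killing form vanishing.} Your argument here is essentially the paper's: cite Kac for type $Q$, and for $H(5)$ use the grading $H_i=\Lambda^{i+2}(\bk^5)$ and cancel $\Lambda^i$ against $\Lambda^{5-i}$. As written, however, it only handles the block $H_0\otimes H_0$. Since $\Lambda^1$ and $\Lambda^3$ are both odd, the block $H_1\otimes H_{-1}$ must also be killed. It vanishes because it is an $\fso_5$-invariant pairing between $\Lambda^3(\bk^5)$, which restricts to $V_2\oplus V_6$ under a principal $\fsl_2$, and $\Lambda^1(\bk^5)\cong V_4$. Likewise, for $Q(2)$ the odd--odd block is a skew-symmetric $\fsl_3$-invariant form on the adjoint module, hence zero. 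That is cleaner than your sketch via odd brackets.

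\textbf{Non-isomorphism: the genuine gap.} The gap is exactly at the step you flag. You need $\kappa$ to be nondegenerate on $\Lie(G)$ modulo $p$, and this really fails when $p\mid h^\vee$: for example, $\kappa=2n\operatorname{tr}(xy)$ on $\fsl_n$ vanishes identically if $p\mid n$. Without excluding that case, nothing distinguishes the algebras.

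Your fallback of bounding $\dim G$ cannot work. The negligible part of $\Lie(G)$ is invisible in $\Ver_p$ and unbounded: $\fsl(V_a\oplus V_{p-1}^{\oplus k})$ has image $\fsl(L_a)$ for every $k$ when $p\nmid a+1$. The other fallback, comparing a bracket such as $L_{p-4}\otimes L_{p-4}\to L_2$, presupposes knowing all hypothetical $\Lie_p(G,\phi)$ on this object, which is precisely what is unknown.

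\textbf{The missing idea: a dimension count modulo $p$.}
\begin{enumerate}
\item Negligible tilting modules have dimension divisible by $p$, so $\dim\Lie(G)\equiv\dim\fg=3+5+(p-5)+(p-3)\equiv0\pmod p$.
\item A type-by-type check shows that $p\mid h^\vee$ forces $\dim\Lie(G)\equiv\pm1\pmod p$ in the classical types, and the exceptional $h^\vee$ have no prime factors above $5$. Hence $p\nmid h^\vee$.
\item The form $\kappa/(2h^\vee)$ is integral, with determinant $h^\vee$ in type $A$ and with no prime factors above $5$ otherwise. So $\kappa$ is nondegenerate for $p\geq7$.
\item Its image is then an isomorphism $\fg\to\fg^*$, so it cannot be zero.
\end{enumerate}

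Once this is in place, your own computation $\kappa|_{L_2\otimes L_2}=8+40-40-8=0$ already gives a contradiction. That computation holds for any based structure on this object, because it only uses the canonical action, and $L_2$ has multiplicity one in $\fg$. So in fact no $\Lie_p(G,\phi)$ has this underlying object at all. Without the dimension argument, though, the proof is incomplete.
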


\begin{proof}
The Killing form is zero on the type $Q$ Lie superalgebras (see \cite[\S2.4]{Kac}), and it is also zero on $H(5)$ as follows. Since the Poisson bracket on $\Lambda(\bk^5)$ has degree $-2$, $H(5)$ has a $\mZ$-grading with components $H_i=\Lambda^{i+2}(\bk^5)$ for $-1\leq i\leq 2$. The grading means that the Killing form is only non-zero on $H_j\otimes H_{-j}$ for $j\in\{-1,0,1\}$. For $x,y\in H_0$, the perfect pairing of $\Lambda^i(\bk^5)$ with $\Lambda^{5-i}(\bk^5)$ means that $\ad(x)\circ\ad(y)$ has the same trace on $H_i$ and $H_{1-i}$ but with opposite parity, and thus $\kappa$ is zero on $H_0\otimes H_0$. Meanwhile, the Killing form on $H_1\otimes H_{-1}$ corresponds to a $H_0$-module map $H_1\to H_{-1}^*$, but restricting along a principal morphism $\fsl_2\hookrightarrow\fso_5\cong H_0$ turns this into a map $\Lambda^3(V_4)\cong V_2\oplus V_6\to\Lambda^1(V_4)^*\cong V_4$ which must be zero. Thus, $\kappa$ is zero on $H(5)$ and its image in $\Ver_p$.

Suppose there exist $G$, $\phi$ and $p\geq7$ such that $\fg=\Lie_p(G,\phi)$ has $\kappa=0$ and underlying object $L_2\oplus L_4\oplus L_{p-6}\oplus L_{p-4}$. Since negligible tilting modules in $\Tilt\SL_2$ have dimension a multiple of $p$, the dimension of $\Lie(G)$ equals the categorical dimension of $\fg$ modulo $p$, which is $3+5-5-3=0$. This shows that $p$ does not divide the dual Coxeter number $h^\vee$:
\begin{enumerate}
\item In type $A_n$, if $p$ divides $h^\vee=n+1$ then $\dim(\Lie(G))=(n+1)^2-1=-1\mod p$.
\item In type $B_n$, if $p$ divides $h^\vee=2n-1$ then $\dim(\Lie(G))=n(2n+1)=1\mod p$.
\item In type $C_n$, if $p$ divides $h^\vee=n+1$ then $\dim(\Lie(G))=n(2n+1)=1\mod p$.
\item In type $D_n$, if $p$ divides $h^\vee=2(n-1)$ then $\dim(\Lie(G))=n(2n-1)=1\mod p$.
\item In the exceptional types, $h^\vee$ has no prime divisors above 5.
\end{enumerate}
Now, the form $\kappa/(2h^\vee)$ on $\Lie(G)$ is integral, and the determinant of $\kappa/(2h^\vee)$ equals $h^\vee$ if $G$ has type $A$ and otherwise has no prime factors above 5 (see the definitions and table in \cite[\S 5]{GN}). This means that for $p\geq7$ not dividing $h^\vee$, $\kappa$ is non-degenerate on $\Lie(G)$ and thus corresponds to an isomorphism $\Lie(G)\to\Lie(G)^*$. This descends to an isomorphism $\fg\to\fg^*$ in $\Ver_p$ corresponding to $\kappa$ on $\fg$, contradicting $\kappa=0$.
\end{proof}

\section{Combinatorics of operadic Lie algebras in $\Ver_p$}\label{SecLieForm}

We will now rewrite the axioms of operadic Lie algebras in $\Ver_p$ in terms of the decompositions from Section~\ref{SecPrelim}. This will be used to prove part (a) of Theorem~\ref{ThmMain}.

Let $\fg$ be an object in $\Ver_p$ with $p\geq5$, and fix a decomposition into simples $\fg=\bigoplus_i M_i$ with $M_i=L_{n_i}$ for some $n_i\in\{0,\dots,p-2\}$. Let $[,]:\fg\otimes\fg\to\fg$ be a morphism, and write $\beta^{i,j}_k$ for the scalar such that $\beta^{i,j}_k\proj_{n_k}^{n_i,n_j}$ is equal to the morphism
\[M_i\otimes M_j\hookrightarrow\fg\otimes\fg\xrightarrow{[,]}\fg\twoheadrightarrow M_k,\]
with $\beta^{i,j}_k=0$ whenever $(n_i,n_j,n_k)$ does not satisfy the $p$-triangle condition.

\begin{prop}\label{PropLieForm}
$\fg$ is an operadic Lie algebra with bracket $[,]$ if and only if
\begin{equation}\label{EqnAntisym}
\beta^{i,j}_k=(-1)^{1+(n_i+n_j-n_k)/2}\beta^{j,i}_k
\end{equation}
for all indices $i,j,k$ such that $n_i+n_j-n_k$ is even (antisymmetry), and
\begin{equation}\label{EqnJacobi}
\begin{split}
\left(\sum_{m\text{ with }n_m=r}\beta^{m,k}_l\beta^{i,j}_m\right)&-\left(\sum_m\beta^{i,m}_l\beta^{j,k}_m\alpha^{n_i,n_j,r}_{n_k,n_l,n_m}\right)\\
+&\left((-1)^{(n_i+n_j-r)/2}\sum_m\beta^{j,m}_l\beta^{i,k}_m\alpha^{n_j,n_i,r}_{n_k,n_l,n_m}\right)=0
\end{split}
\end{equation}
for all indices $i,j,k,l$ and $r\in\{0,\dots,p-2\}$ such that $n_i+n_j-r$ is even (Jacobi identity). If additionally $M_0=L_2$ and $(\fg,\phi)$ is a based Lie algebra where $\phi$ is the inclusion of $M_0$, then
\[\beta^{0,i}_j=\beta^{i,0}_j=\begin{cases}\frac{n_i+2}4&i=j\text{ and }n_i\neq0\\0&i\neq j\text{ or }n_i=0\end{cases}\]
for all indices $i,j$. 
\end{prop}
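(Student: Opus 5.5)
The plan is to translate each axiom into statements about the scalars $\beta^{i,j}_k$, by pre- and post-composing with the fixed inclusion and projection morphisms. Since $\Ver_p$ is semisimple and the $\incl$/$\proj$ maps give complete direct sum decompositions of each $L_a\otimes L_b$, two morphisms out of $M_i\otimes M_j$ (or out of $M_i\otimes M_j\otimes M_k$) into $M_l$ agree if and only if they agree after precomposing with every $\incl$ onto a simple summand. The source summands are indexed by the intermediate weight $r$ for a fixed bracketing, and then by the target weight $n_l$. Each such composite is a scalar multiple of the identity on a simple object, so every identity reduces to scalar equations.

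\textbf{Antisymmetry.} Restrict $[,]\circ\braid_{\fg,\fg}$ to $M_i\otimes M_j\to M_k$. This gives $\beta^{j,i}_k\proj^{n_j,n_i}_{n_k}\circ\braid_{M_i,M_j}$. Precomposing with $\incl^{n_i,n_j}_{n_k}$ and applying Equation~(\ref{EqnBraidComposition}) gives the scalar $(-1)^{(n_i+n_j-n_k)/2}\beta^{j,i}_k$. Since $\Hom(M_i\otimes M_j,M_k)$ is at most one-dimensional and spanned by $\proj$, the requirement that this equal $-\beta^{i,j}_k$ is exactly Equation~(\ref{EqnAntisym}). When the $p$-triangle condition fails, both sides vanish by convention.

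\textbf{Jacobi identity.} Use antisymmetry to rewrite the identity in the equivalent form
\[[[x,y],z]=[x,[y,z]]-[y,[x,z]],\]
understood with the appropriate braidings. Then restrict to $M_i\otimes M_j\otimes M_k$, bracketed as $(M_i\otimes M_j)\otimes M_k$. Precompose with $(\incl^{n_i,n_j}_r\otimes\id)\circ\incl^{r,n_k}_{n_l}$ and postcompose with the projection onto $M_l$. Each term is then handled as follows.
\begin{itemize}
\item The left side gives $\sum_{n_m=r}\beta^{m,k}_l\beta^{i,j}_m$ directly.
\item For $[x,[y,z]]$, insert the decomposition $\id=\sum_s\incl^{n_j,n_k}_s\proj^{n_j,n_k}_s$ and reassociate. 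Equation~(\ref{EqnAssocComposition}) produces $\alpha^{n_i,n_j,r}_{n_k,n_l,n_m}$, and the inner bracket lands in $M_m$ with $n_m=s$.
\item For $[y,[x,z]]$, first braid $M_i\otimes M_j$. On the $L_r$ summand this contributes the sign $(-1)^{(n_i+n_j-r)/2}$ by Equation~(\ref{EqnBraidComposition}), after which the previous computation applies with $i$ and $j$ swapped.
\end{itemize}
Collecting these gives Equation~(\ref{EqnJacobi}). Conversely, if the scalar equations hold for all $i,j,k,l,r$, then the morphism identity holds on every summand, hence on all of $\fg^{\otimes3}$.

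The main obstacle is bookkeeping: checking that the rewritten three-term form is genuinely equivalent to the cyclic Jacobi identity given antisymmetry, and getting the signs and the index placement in the $\alpha$'s right. This includes checking that the $\proj$/$\incl$ normalisations match Equation~(\ref{EqnAssocComposition}) exactly, so that no extra braiding or inverse associator (Equation~(\ref{EqnRightLeft})) appears. One also has to watch that the $p$-triangle convention $\alpha=0$ kills precisely the terms absent from the decomposition.

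\textbf{Based condition.} By Proposition~\ref{PropCanAction} and Lemma~\ref{LemUniqueAction}, the adjoint action of $M_0=L_2$ must act on each $M_i$ with $n_i\notin\{0,p-2\}$ by $\frac{n_i+2}{4}\proj^{2,n_i}_{n_i}$, diagonally in some decomposition. The decomposition is assumed fixed and adapted to this action, so $\beta^{0,i}_j=0$ for $i\neq j$. For $n_i\in\{0,p-2\}$ the relevant $\proj$ is zero, so we may take $\beta^{0,i}_i=0$ there. Finally, antisymmetry with $(n_0+n_i-n_i)/2=1$ gives sign $+1$, so $\beta^{i,0}_j=\beta^{0,i}_j$.
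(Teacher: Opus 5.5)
Your proposal is correct and takes the same route as the paper. You rewrite Jacobi as $[[x,y],z]-[x,[y,z]]+[y,[x,z]]=0$, restrict along $(\incl^{n_i,n_j}_r\otimes\id)\circ\incl^{r,n_k}_{n_l}$, and read off the three terms from Equations~(\ref{EqnBraidComposition}) and (\ref{EqnAssocComposition}), exactly as the paper does.

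One small tightening in the based part: you do not need to assume the fixed decomposition is ``adapted''. The canonical action is natural; equivalently, on each isotypic block $A\otimes L_n$ it is $\id_A\otimes\frac{n+2}{4}\proj^{2,n}_n$, which is invariant under $\GL(A)$. So it is diagonal with the stated scalars in every decomposition containing $M_0$.
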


\begin{proof}
Equation~(\ref{EqnAntisym}) follows from Equation~(\ref{EqnBraidComposition}). The Jacobi identity requires that the morphism $\Sigma:\fg^{\otimes3}\to\fg$ is zero, where $\Sigma$ is the sum of the following morphisms:
\begin{align*}
&(\fg\otimes\fg)\otimes\fg\xrightarrow{[,]\otimes1}\fg\otimes\fg\xrightarrow{[,]}\fg\\
&(\fg\otimes\fg)\otimes\fg\xrightarrow{\text{assoc.}}\fg\otimes(\fg\otimes\fg)\xrightarrow{1\otimes[,]}\fg\otimes\fg\xrightarrow{-[,]}\fg\\
&(\fg\otimes\fg)\otimes\fg\xrightarrow{\text{braid}\otimes1}(\fg\otimes\fg)\otimes\fg\xrightarrow{\text{assoc.}}\fg\otimes(\fg\otimes\fg)\xrightarrow{-1\otimes[,]}\fg\otimes\fg\xrightarrow{-[,]}\fg
\end{align*}
Splitting this over summands of $\fg$, the Jacobi identity is equivalent to the morphism
\[L_s\xrightarrow{\incl_s^{r,n_k}} L_r\otimes M_k\xrightarrow{\incl_r^{n_i,n_j}\otimes1}(M_i\otimes M_j)\otimes M_k\hookrightarrow\fg^{\otimes3}\xrightarrow{\Sigma}\fg\twoheadrightarrow M_l\]
being zero for all indices $i,j,k,l$ and $r,s\in\{0,\dots,p-2\}$. This is automatically zero if $s\neq n_l$, and otherwise it is the identity morphism on $L_{n_l}$ times the scalar quantity in Equation~(\ref{EqnJacobi}). Finally, the action of $\fsl(L_1)$ on a based Lie algebra follows from Section~\ref{SecSL2Lie}.
\end{proof}

\begin{remark}
This process also gives formulas for representations of $\fg$. Suppose $X\in\Ver_p$ has decomposition $X=\bigoplus_i X_i$, $X_i=L_{x_i}$, and we have morphisms
\[\rho_k^{i,j}\proj_{x_k}^{n_i,x_j}:M_i\otimes X_j\to X_k.\]
The condition for $(X,\rho)$ to be a representation of $\fg$ is ``the Jacobi identity on $\fg\otimes\fg\otimes X$.'' That is, we take the 3 morphisms on $\fg\otimes\fg\otimes\fg$ in the proof of Proposition~\ref{PropLieForm}, replace the rightmost $\fg$ with $X$ and replace brackets with $\rho$ where necessary, and then require that the sum equal zero. This means $(X,\rho)$ is a $\fg$-module if and only if Equation~(\ref{EqnJacobi}) holds after replacing appropriate $n$'s by $x$'s and $\beta$'s by $\rho$'s.
\end{remark}

The following two lemmas will be useful for proving Theorem~\ref{ThmMain}.

\begin{lemma}\label{LemGoodDecomp}
Suppose $\fg$ is a based Lie algebra in $\Ver_p$ with $p\geq5$, and $\fg\not\in\sVec\subset\Ver_p$. Then there exists a decomposition $\fg=\bigoplus_i M_i$, with $M_0$ the canonical $\fsl(L_1)$, for which $\beta^{i,j}_0=0$ whenever $i,j$ are such that $i\neq j$, $n_i=n_j$ and $n_i,n_j$ are even.
\end{lemma}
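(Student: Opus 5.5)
We start from any decomposition $\fg=\bigoplus_i M_i$ with $M_0=\im(\phi)$. By Proposition~\ref{PropUnique}, $\phi$ is nonzero because $\fg\notin\sVec$, and it is injective since $L_2$ is simple. So $M_0\cong L_2$ can serve as the first summand. The plan is to change basis within each isotypic component $\fg_{(n)}\cong L_n^{\oplus m_n}$ for even $n$. Such a change of basis amounts to choosing a basis of the multiplicity space $U_n=\Hom(L_n,\fg)$, a vector space of dimension $m_n$. For $n=2$ we must keep $M_0$ fixed.

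The key observation is that for even $n$ the scalars $\beta^{i,j}_0$ with $n_i=n_j=n$ define a bilinear form on $U_n$. It is $B_n(f,g)$, the scalar $\beta$ such that
\[
\phi^\ast\circ[,]\circ(f\otimes g)=\beta\,\proj^{n,n}_2,
\]
where $\phi^\ast$ is the projection onto $M_0$. For $n=2$ we instead project onto $M_0$ along a complement that is fixed in advance. By Equation~(\ref{EqnAntisym}) with $n_k=2$ we get $\beta^{i,j}_0=(-1)^{1+(2n-2)/2}\beta^{j,i}_0=(-1)^{n}\beta^{j,i}_0=\beta^{j,i}_0$ because $n$ is even. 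So $B_n$ is \emph{symmetric}, and the lemma asks that each $B_n$ be diagonal in the chosen basis.

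Since $\bk$ has characteristic $p\ge5\neq2$, every symmetric bilinear form has an orthogonal basis (Gram–Schmidt over fields of odd characteristic). For even $n\neq2$ we simply pick a $B_n$-orthogonal basis of $U_n$ and take the $M_i$ to be the corresponding images. The subtlety is whether $\beta^{i,j}_0$ depends on how the projection onto $M_0$ is defined. It does not, because the projection onto a summand depends only on the complement. We fix the complement to be $\bigoplus_{n}\fg_{(n)}$ for $n\neq2$ together with a complement $C$ of $M_0$ inside $\fg_{(2)}$. Re-basing within a fixed $\fg_{(n)}$ changes neither $M_0$ nor its complement.

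For $n=2$ we must also keep $M_0$ as one basis vector $\phi\in U_2$. The based condition in Proposition~\ref{PropLieForm} gives $\beta^{0,i}_0=\beta^{i,0}_0=0$ for $i\neq0$ whenever $M_i$ is another summand isomorphic to $L_2$. So we need $\beta^{0,0}_0=1\neq0$ (namely $\frac{2+2}{4}=1$), and $\phi$ must be $B_2$-orthogonal to whatever complement we use. Take $C=\phi^{\perp}$ with respect to $B_2$, and then diagonalise $B_2|_C$. For this to be consistent we need $B_2$ to be defined relative to $C$, which is circular. We break the circularity as follows. First choose any complement $C_0$ and compute with the projection along it. Then $B_2(\phi,\phi)=1$ and $B_2(\phi,c)=0$ for $c\in C_0$, the latter by the based property, since $[\phi,c]$ is the canonical action and lands in the $c$-component. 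So $\phi$ is already orthogonal to $C_0$. We then orthogonalise $C_0$ with respect to $B_2|_{C_0}$, which leaves the complement $C_0$ unchanged as a subobject and hence leaves the projection unchanged.

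The main thing to check carefully is exactly this point: the scalars $\beta^{i,j}_0$ for $i,j\neq0$ depend on the decomposition only through the complement of $M_0$, and that complement is preserved by all the re-basings we perform. Once this is confirmed, the lemma reduces to diagonalising finitely many symmetric bilinear forms over a field of characteristic $\neq2$.
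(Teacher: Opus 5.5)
Your proposal is correct and follows essentially the same route as the paper: antisymmetry makes the $M_0$-components of brackets within each even isotypic block a symmetric bilinear form on the multiplicity space, which you diagonalise over $\bk$ (characteristic $\neq 2$), rebasing only the summands other than $M_0$. Your extra bookkeeping about fixing the complement of $M_0$ is what the paper's phrase ``excluding $M_0$'' does implicitly. The circularity you raise never actually arises, because $\beta^{0,i}_0=\beta^{i,0}_0=0$ for $i\neq 0$ holds automatically from the based condition in every decomposition.
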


\begin{proof}
Fix a decomposition $\fg=\bigoplus_i M_i$, and suppose $i_1,\dots,i_m$ are distinct non-zero indices such that $M_{i_1}=\cdots=M_{i_m}=L_n$. If $n$ is even, then by Equation~(\ref{EqnAntisym}) the restriction of $\beta$ to $(\bigoplus_j M_{i_j})^{\otimes2}\to M_0$ corresponds to a symmetric bilinear form $\bk^m\times\bk^m\to\bk$. By \cite[Theorem~6.5]{Jac} there exists a basis of $\bk^m$ for which this bilinear form is diagonal, meaning we can choose a new decomposition $\bigoplus_j M'_{i_j}$ of $\bigoplus_j M_{i_j}$ for which $\beta^{i_j,i_k}_0=0$ for all $j\neq k$. Repeating for the isomorphism class of $L_n$ for each even $n$ (and excluding $M_0$ when considering the isomorphism class of $L_2$) gives the required decomposition.
\end{proof}

\begin{lemma}\label{LemL2Alg}
Suppose $p\geq5$ and $\fg$ is a Lie algebra in $\Ver_p$ with underlying object $L_2^{\oplus n}=A\otimes L_2$ where $A$ is an $n$-dimensional vector space. Then the Lie bracket is $\nabla\otimes\proj^{2,2}_2$ for some operation $\nabla:A\otimes A\to A$ turning $A$ into a (possibly non-unital) commutative associative $\bk$-algebra. If additionally $\fg$ is a based Lie algebra, then $A$ is also unital.
\end{lemma}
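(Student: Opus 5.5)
The plan is to exploit the fact that $\Hom(L_2\otimes L_2,L_2)$ is one-dimensional and spanned by $\proj^{2,2}_2$. Write $\fg=A\otimes L_2$ and pick a basis $a_1,\dots,a_n$ of $A$, giving a decomposition $\fg=\bigoplus_i M_i$ with $M_i=a_i\otimes L_2$. Every component of the bracket $M_i\otimes M_j\to M_k$ is a scalar $\beta^{i,j}_k$ times $\proj^{2,2}_2$, as in Section~\ref{SecLieForm}. Setting $\nabla(a_i\otimes a_j)=\sum_k\beta^{i,j}_k a_k$ defines a bilinear operation on $A$, and then the bracket equals $\nabla\otimes\proj^{2,2}_2$. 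This description does not depend on the chosen basis, because changing basis of $A$ acts on both sides compatibly.

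Next I would translate the axioms of Proposition~\ref{PropLieForm} into properties of $\nabla$.

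\textbf{Antisymmetry.} Equation~(\ref{EqnAntisym}) with $n_i=n_j=n_k=2$ gives $(n_i+n_j-n_k)/2=1$, so the sign $(-1)^{1+1}$ is $+1$. Hence $\beta^{i,j}_k=\beta^{j,i}_k$, and $\nabla$ is commutative.

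\textbf{Jacobi.} Take Equation~(\ref{EqnJacobi}) with all $n$'s equal to $2$. Then $r$ ranges over $\{0,2,4\}$ subject to the $p$-triangle condition with $(2,2,r)$ and $(r,2,2)$; since $p\geq5$, $r=4$ is allowed only when $p\geq7$. The $\beta$-sums collapse to structure constants of $\nabla$:
\begin{itemize}
\item the first term is the coefficient of $a_l$ in $\nabla(\nabla(a_i,a_j),a_k)$, nonzero only for $r=2$;
\item the second term is $\alpha^{2,2,r}_{2,2,2}$ times the coefficient of $a_l$ in $\nabla(a_i,\nabla(a_j,a_k))$;
\item the third term is $\pm\alpha^{2,2,r}_{2,2,2}$ times the coefficient of $a_l$ in $\nabla(a_j,\nabla(a_i,a_k))$.
\end{itemize}

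So for each $r$ I obtain a linear relation among the trilinear maps
\[P(x,y,z)=(xy)z,\qquad Q(x,y,z)=x(yz),\qquad R(x,y,z)=y(xz).\]
Using commutativity, $R(x,y,z)=(xz)y$. The relation for $r=2$ reads $P-\alpha_2Q+\alpha_2R=0$, where $\alpha_2=\alpha^{2,2,2}_{2,2,2}$ (the braid sign here is $(-1)^{1}$). The relations for $r=0$ and $r=4$ read $-\alpha_rQ\pm\alpha_rR=0$.

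The key computation is to evaluate $\alpha^{2,2,r}_{2,2,2}$ from Equation~(\ref{EqnAssociator}), or from $f_u(1,1,1)g(1,1,1)$ via Proposition~\ref{PropRational}, for $r\in\{0,2,4\}$. I need to check that these values are nonzero mod $p$ for $p\geq5$. Once that holds, the $r=0$ relation (sign $(-1)^{(2+2-0)/2}=+1$) gives $Q=R$, that is $x(yz)=y(xz)$. The $r=2$ relation then gives $P=\alpha_2(Q-R)=0$?

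That conclusion is wrong: it would force all products to vanish, yet $\fsl_2$ itself (the case $A=\bk$) satisfies Jacobi. So I must recheck the signs and the exact right-hand sides when doing the computation, rather than trusting this sketch. The correct outcome is that the relations together are equivalent to $x(yz)=(xy)z$ for commutative $\nabla$. Consistency with $A=\bk$, where the bracket is the $\fsl_2$ bracket, and with $A=\bk^n$ with componentwise product, pins down the constants and serves as a check on the computation.

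A slicker route for this step is to compare with $\fsl_2\otimes A$ for a commutative associative $A$, which is a Lie algebra in $\Vecc$ and maps to $A\otimes L_2$. Writing the Jacobi identity for a general bilinear $\nabla$ and using the space of $\SL_2$-maps $V_2^{\otimes3}\to V_2$, the combination must reduce to "associator of $\nabla$ $\otimes$ a fixed nonzero morphism". Verifying that this fixed morphism is nonzero in $\Ver_p$ is exactly the $6j$-value check above. I expect this to be the main obstacle: correctly carrying the $r$-indexed components and their nonvanishing mod $p$.

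\textbf{Based case.} Let $M_0=\phi(\fsl(L_1))=a_0\otimes L_2$. Proposition~\ref{PropLieForm} gives $\beta^{0,i}_j=\delta_{ij}\frac{2+2}{4}=\delta_{ij}$. Hence $\nabla(a_0,a_i)=a_i$ for all $i$, so $a_0$ is a unit of $A$.
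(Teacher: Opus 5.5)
Your setup, the commutativity argument and the unit argument match the paper's. The Jacobi step, however, is the entire content of the lemma, and it is not proved: a sign slip leads you to a contradiction, and you then assert the ``correct outcome'' instead of deriving it. The slip is in the $r=2$ relation. The third term of Equation~(\ref{EqnJacobi}) carries the factor $(-1)^{(n_i+n_j-r)/2}=(-1)^{1}=-1$. So with all $n$'s equal to $2$ the relation is $P-\alpha_2Q-\alpha_2R=0$, not $P-\alpha_2Q+\alpha_2R=0$. With $\alpha_2=\alpha^{2,2,2}_{2,2,2}=\tfrac12$ it reads $2(xy)z=x(yz)+y(xz)$, which is the paper's first family of equations. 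In fact you never needed $r=2$. Since $\alpha^{2,2,0}_{2,2,2}=f_0(1,1,1)g(1,1,1)=3\cdot(-\tfrac13)=-1\neq0$, your own $r=0$ relation gives $x(yz)=y(xz)$ for all $x,y,z\in A$. Together with commutativity this is already associativity: $(xy)z=z(xy)=x(zy)=x(yz)$. The $r=4$ relation (present only for $p\geq7$) is the same condition or vacuous. Once $Q=R$, the corrected $r=2$ relation becomes $(xy)z=x(yz)$, so it imposes nothing further.

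Consequently the ``key computation'' you single out is aimed at the wrong thing. The implication in the statement needs only $\alpha^{2,2,0}_{2,2,2}\neq0$. The converse, that every commutative associative $A$ yields a Lie algebra (which the paper's proof also gives), needs the exact value $\alpha^{2,2,2}_{2,2,2}=\tfrac12$. With any other value, the $r=0$ and $r=2$ relations together would force $(1-2\alpha_2)(xy)z=0$.

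Your alternative via $\fsl_2\otimes A$ has the same hole and is also misdescribed. Let $\psi_k\in\Hom(L_2^{\otimes3},L_2)$ be the three cyclic composites of $\proj^{2,2}_2$, and $T_k$ the correspondingly permuted products on $A$. The $\fsl_2$ Jacobi identity $\psi_0+\psi_1+\psi_2=0$ turns the Jacobi map into $(T_0-T_2)\otimes\psi_0+(T_1-T_2)\otimes\psi_1$. So what must be checked is the linear independence of $\psi_0$ and $\psi_1$, not the nonvanishing of a single morphism. For instance, $\psi_0\circ(\incl^{2,2}_0\otimes\id_{L_2})=0$ while $\psi_1\circ(\incl^{2,2}_0\otimes\id_{L_2})\neq0$, and this is again the $r=0$ computation.
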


\begin{proof}
Equation~(\ref{EqnAntisym}) becomes $\beta^{i,j}_k=\beta^{j,i}_k$ for all indices $i,j,k$, and Equation~(\ref{EqnJacobi}) becomes
\[\delta_{r=2}\left(\sum_m\beta^{m,k}_l\beta^{i,j}_m\right)+\sum_m\left((-1)^{r/2}\beta^{j,m}_l\beta^{i,k}_m-\beta^{i,m}_l\beta^{j,k}_m\right)\alpha^{2,2,r}_{2,2,2}=0.\]
For all $p$ we have $\alpha^{2,2,0}_{2,2,2}=-1$ for $r=0$ and $\alpha^{2,2,2}_{2,2,2}=1/2$ for $r=2$. Any other value of $r$ gives an equation which is either trivial or equivalent to the equation for $r=0$. Thus, the complete set of Jacobi equations is
\[\sum_m\left(2\beta^{m,k}_l\beta^{i,j}_m-\beta^{j,m}_l\beta^{i,k}_m-\beta^{i,m}_l\beta^{j,k}_m\right)=0,\quad
\sum_m\left(\beta^{j,m}_l\beta^{i,k}_m-\beta^{i,m}_l\beta^{j,k}_m\right)=0\]
for all indices $i,j,k,l$. Note that the first equation is actually a linear combination of two copies of the second equation, one with $i$ and $k$ swapped and the other with $j$ and $k$ swapped. Fix a basis $\{v_i\}$ of $A$, and define $\nabla$ by $\nabla(v_i,v_j)=\sum_m\beta^{i,j}_m v_m$. Then the equations above are exactly the equations which must be satisfied for $\nabla$ to be commutative and associative. In particular, the $v_l$ components of $\nabla(\nabla(v_i,v_k),v_j)$ and $\nabla(v_i,\nabla(v_k,v_j))$ are $\sum_m\beta^{j,m}_l\beta^{i,k}_m$ and $\sum_m\beta^{i,m}_l\beta^{j,k}_m$ respectively. Finally, if $\fg$ is based with $\fsl(L_1)$-subalgebra in index 0, then $\beta^{0,i}_j=\beta^{i,0}_j=\delta_{i=j}$ for all $i,j$, and thus $v_0$ is an identity element.
\end{proof}

\begin{remark}
Lemma~\ref{LemL2Alg} can also be derived from \cite{EKO}, where Lie algebras of the form $\unit^{\oplus m}\oplus L_2^{\oplus n}$ are studied. 
\end{remark}

\begin{theorem}\label{ThmLength1}
Let $p$ be a prime. Every operadic Lie algebra in $\Ver_p$ with length 1 is isomorphic to either $\fsl(L_1)$ (which only exists for $p\geq5$) or the trivial Lie algebra on $L_n$ for some $n\in\{0,\dots,p-2\}$.
\end{theorem}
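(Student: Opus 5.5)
Since $\fg=L_n$ is simple, the bracket lies in $\Hom(L_n\otimes L_n,L_n)$, which is nonzero only when $(n,n,n)$ satisfies the $p$-triangle condition. That condition requires $n$ even (so that $3n$ is even) and $3n\leq 2p-4$. The space is then one-dimensional, so $[,]=\beta\proj^{n,n}_n$ for a single scalar $\beta$. For $p\in\{2,3\}$ the only possibility is $n=0$. For $n=0$ the bracket $\unit\otimes\unit\to\unit$ is antisymmetric only if it is zero, by Equation~(\ref{EqnAntisym}) with exponent $1$. So we may assume $n\geq 2$ is even, $3n\leq 2p-4$, $p\geq5$, and it remains to determine which $\beta$ are allowed.

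\textbf{Antisymmetry.} Equation~(\ref{EqnAntisym}) gives $\beta=(-1)^{1+n/2}\beta$. If $n\equiv 0\pmod 4$, this forces $\beta=0$. So assume $n\equiv 2\pmod 4$ and $\beta\neq0$.

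\textbf{Jacobi identity.} With a single index, Equation~(\ref{EqnJacobi}) reads, for each even $r$ with $(n,n,r)$ admissible,
\[\beta^2\Big(\delta_{r=n}-\alpha^{n,n,r}_{n,n,n}+(-1)^{(2n-r)/2}\alpha^{n,n,r}_{n,n,n}\Big)=0.\]
The factor is $\delta_{r=n}-2\alpha^{n,n,r}_{n,n,n}$ when $r/2$ is odd, and $\delta_{r=n}$ when $r/2$ is even. We only need one value of $r$ at which this factor is nonzero whenever $n\neq2$.

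\textbf{Case $n=2$.} Here $\alpha^{2,2,2}_{2,2,2}=1/2$ (as computed in the proof of Lemma~\ref{LemL2Alg}), so the $r=2$ equation holds. The remaining equations are those treated in Lemma~\ref{LemL2Alg}, which reduce to associativity and commutativity of a one-dimensional algebra with multiplication $\beta$. This is automatic, so every $\beta$ is allowed. Rescaling by $\beta^{-1}$ gives an isomorphism with $\beta=1$, i.e.\ with $\fsl(L_1)$.

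\textbf{Case $n\geq 6$, $n\equiv2\pmod 4$.} I would take $r=2$. Since $1$ is odd and $n\neq2$, the factor is $-2\alpha^{n,n,2}_{n,n,n}$, so it suffices to show $\alpha^{n,n,2}_{n,n,n}\neq0$ in $\bk$. Proposition~\ref{PropRational} with $u=1$ and $x=y=z=n/2$ gives
\[\alpha=f_1(x,x,x)\,g(x,x,x),\qquad f_1(x,x,x)=(2x+1)\frac{-x^2-x}{4x^2}=-\frac{(2x+1)(x+1)}{4x}.\]
The factor $g$ is nonzero since $3x\leq p-2$. Also $2x+1=n+1\leq p-1$, $x+1<p$, and $x\neq 0$ mod $p$, so $f_1\neq0$. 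Hence $\beta^2=0$, a contradiction.

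The main obstacle I anticipate is verifying that the rational-function evaluation is legitimate: checking the hypotheses of Proposition~\ref{PropRational} ($u=1\leq 2x$, $3x\leq p-2$) and that the triples $(2,n,n)$ are admissible. Both follow from $3n\leq 2p-4$. As an alternative, one can compute $\alpha^{n,n,2}_{n,n,n}$ directly from Equation~(\ref{EqnAssociator}), where the sum has at most two terms.
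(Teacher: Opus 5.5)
This is essentially the paper's proof. You reduce to $n\equiv 2\pmod 4$ using the triangle condition and antisymmetry, then use the $r=2$ Jacobi equation, whose coefficient $-2\alpha^{n,n,2}_{n,n,n}$ is nonzero for $n\neq 2$. Your evaluation via Proposition~\ref{PropRational} with $u=1$ reproduces exactly the closed form $-\frac{(-1)^{n/2}(n+1)(n+2)n!^3}{4n(n/2)!^3(3n/2+1)!}$ that the paper writes down directly.

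There is one small slip, in the $p=2$ case, which the paper simply calls trivial. In characteristic $2$ the antisymmetry relation $\beta=-\beta$ is vacuous, so it does not kill the bracket on $\unit$. Use the Jacobi identity instead: on $\unit^{\otimes 3}$ it reads $3\beta^2=\beta^2=0$.
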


\begin{proof}
The cases $\Ver_2\simeq\Vecc$ and $\Ver_3\simeq\sVec$ are trivial, so assume $p\geq5$. Suppose $\fg=L_n$ is an operadic Lie algebra, and let the Lie bracket be $\beta\proj^{n,n}_n:L_n\otimes L_n\to L_n$ with $\beta\in\bk$. If two Lie algebras $\fg,\fg'$ with the same underlying object $L_n$ have $\beta\neq0$ and $\beta'\neq0$, then we have an isomorphism $f:\fg\to\fg'$ given by scaling by $\beta/\beta'$, so there is at most one non-trivial Lie algebra up to isomorphism on each $L_n$. We now classify all $(n,p)$ for which a non-trivial Lie algebra $\fg$ exists.

If $n$ is odd then $\beta$ must be zero, so $n$ is even. If $n/2$ is even then antisymmetry gives $\beta=-\beta$ and hence $\beta=0$, so $n/2$ must be odd. The Jacobi identity is
\[\left(\delta_{n=r}+\left((-1)^{r/2}-1\right)\alpha^{n,n,r}_{n,n,n}\right)\beta^2=0\text{ for }0\leq r\leq\min(2n,p-2).\]
We have $\alpha^{2,2,2}_{2,2,2}=1/2$, and hence for $n=r=2$ the coefficient of $\beta^2$ is zero and the equation holds automatically. The coefficient is also zero for $r\in\{0,4\}$, so $n=2$ has a non-trivial Lie algebra, which is of course $\fsl(L_1)$. Now, for even $n\in\mN_{>1}$ we have
\[\alpha^{n,n,2}_{n,n,n}=-\frac{(-1)^{n/2}(n+1)(n+2)n!^3}{4n(\frac n2)!^3(\frac{3n}2+1)!}.\]
Since $n$ is even we have $n\leq p-3$, and $(n,n,n)$ satisfies the $p$-triangle condition since otherwise $\beta=0$, thus $\alpha^{n,n,2}_{n,n,n}\neq0$ modulo $p$. Thus for $n\neq2$ and $r=2$, we get $\beta^2=0$, so there is no non-trivial operadic Lie algebra on $L_n$.
\end{proof}

\begin{theorem}\label{ThmLength2}
Let $\bk$ be algebraically closed with $\charr\bk=p\geq5$, and suppose $\fg$ is a simple based Lie algebra in $\Ver_p$ of length 2. Then $\fg$ is isomorphic to one of the following:
\begin{enumerate}
\item\label{Item2A} $\Lie_p(\SL_3)\cong\fsl(L_2)=L_2\oplus L_4$ for $p\geq7$;
\item\label{Item2B} $\Lie_p(\SO_5)\cong\fso(L_4)=L_2\oplus L_6$ for $p\geq11$;
\item\label{Item2D} $\Lie_p(\SO_{p-1})\cong\fso(L_0\oplus L_{p-3})=L_2\oplus L_{p-3}$ for $p\geq11$;
\item\label{Item2G} $\Lie_p(G_2)=L_2\oplus L_{10}$ for $p\geq17$;
\item\label{Item2E7} $\Lie_{23}(E_7)=L_2\oplus L_{14}$ for $p=23$;
\item\label{Item2E8} $\Lie_{37}(E_8)=L_2\oplus L_{22}$ for $p=37$.
\end{enumerate}
\end{theorem}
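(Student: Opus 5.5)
Since $\fg$ is based and simple with length 2, by Proposition~\ref{PropUnique} either $\fg\in\sVec$ or the canonical $\fsl(L_1)=L_2$ sits inside $\fg$ as $M_0$. In the $\sVec$ case the underlying object has total dimension 2, and Lemma~\ref{LemLieSuper} excludes simplicity. So we may write $\fg=M_0\oplus M_1$ with $M_0=L_2$ and $M_1=L_n$, and Proposition~\ref{PropLieForm} fixes $\beta^{0,1}_1=\beta^{1,0}_1=\frac{n+2}4$ and $\beta^{0,0}_0=1$. If $n=2$, Lemma~\ref{LemL2Alg} makes $\fg=A\otimes L_2$ with $A$ a 2-dimensional unital commutative associative algebra. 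Every proper ideal of $A$ then gives an ideal of $\fg$, and the case $A=\bk\times\bk$ is not simple either, so $n\neq2$. If $n\in\{0,p-2\}$, or if $n$ is odd, the only possible brackets landing in $M_0$ or $M_1$ force $M_1$ to be an ideal: for odd $n$, $M_1\otimes M_1$ can only map to even summands, namely $L_2$, and then $[M_1,M_1]\subseteq M_0$. I would check directly via the Jacobi identity with $r$ chosen suitably that this forces the bracket to vanish, or else accept the $\sVec$-type parity argument. So the interesting case is $n$ even with $4\le n\le p-3$.

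In that case the unknowns are $\beta^{1,1}_0=:b$ and $\beta^{1,1}_1=:c$. Antisymmetry (\ref{EqnAntisym}) gives that $c=0$ unless $n/2$ is odd, and $b$ is unconstrained since $(2n-2)/2$ is odd. Simplicity requires $b\neq0$, since otherwise $M_1$ is an ideal. Rescaling $M_1$ normalises $b=1$. I would then write out the Jacobi equations (\ref{EqnJacobi}) for $(i,j,k,l)=(1,1,1,1)$ and all $r$. Equations involving index $0$ twice are automatic from the canonical action. Using Proposition~\ref{PropRational} with $x=y=n/2$, each equation becomes
\[\delta_{r=2}\,b\,\tfrac{n+2}4+\delta_{r=n}c^2+\big((-1)^{r/2}-1\big)\big(b\tfrac{n+2}4\,\alpha^{n,n,r}_{n,n,2}+c^2\alpha^{n,n,r}_{n,n,n}\big)=0,\]
up to the exact index placement. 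The case $r=2$ gives a linear relation between $c^2$ and $b$. The remaining equations, for $r\equiv2\pmod 4$ with $r\le\min(2n,2p-4-2n)$, are polynomial conditions on $n$ modulo $p$ after substituting the value of $c^2$. The $r=2$ equation involves $f_1$, which is given explicitly.

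For the analysis I would split into $c=0$ and $c\neq0$. When $c=0$ (forced if $n\equiv0\pmod 4$), each needed equation becomes $\alpha^{n,n,r}_{n,n,2}=0$ for every $r\equiv 2\pmod 4$ with $(n,n,r)$ satisfying the $p$-triangle condition, except the one with $r=2$, which then has to hold. These $\alpha$'s are essentially $g\cdot f_1$-type expressions, and $g$ vanishes exactly when $n+r/2+1\ge p-1$. I expect this to force the triangle condition to kill all $r\ge6$ while $r=2$ survives, giving $2n+6>2p-4$, that is $n\in\{p-3,p-4\}$ with $n$ even; this yields $L_2\oplus L_{p-3}$, case~(\ref{Item2D}). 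When $c\neq0$ ($n/2$ odd), the $r=n$ equation and the $r=2$ equation determine $c^2$. The higher $r$ give rational functions of $n$ that must vanish modulo $p$. Generically these only vanish at $n=4$ (for $\fsl(L_2)$, $L_4$ with $n/2$ even — so I would recheck which branch it falls in), $n=6$, and $n=10$ in characteristic zero. Sporadic solutions occur when $p$ divides a resultant of two such polynomials, or when the triangle condition truncates the list of $r$: this is how $n=14,p=23$ and $n=22,p=37$ should appear.

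The main obstacle is this last step: showing the system of equations has no solutions other than those listed. I would handle it as in Proposition~\ref{PropTypeD}. First compute $b_r$ as explicit rational functions of $n$ for the first few $r$ ($r=2,6,10$), and take polynomial combinations to obtain an integer constant whose prime factors bound $p$. Then finish the finitely many small $(n,p)$ by direct computer evaluation of (\ref{EqnAssociator}). Finally, I would identify each surviving solution with the listed $\Lie_p(G)$ by uniqueness, since the structure is determined up to the scaling of $M_1$, and by the existence of $\Lie_p(G)$ on the same object from Proposition~\ref{PropVerpG} and the tables in the proof of Theorem~\ref{ThmSubalgs}.
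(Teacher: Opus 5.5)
Your reductions and your plan for the branch $c=\beta^{1,1}_1\neq0$ match the paper. The paper eliminates between the $r=2$ equation and the $r=6,10$ equations (the determinants $d_6,d_{10}$) down to the constant $204240=2^4\cdot3\cdot5\cdot23\cdot37$, which is where $(n,p)=(14,23),(22,37)$ come from. For odd $n$ no Jacobi check is needed: antisymmetry gives $\beta^{1,1}_0=(-1)^n\beta^{1,1}_0=-\beta^{1,1}_0$.

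The genuine gap is the branch $c=0$, which contains every $n\equiv0\pmod4$ and hence case~(\ref{Item2A}). You argue that the survivors are exactly those $n$ for which the $p$-triangle condition removes all $r\geq6$, giving $n=p-3$.
\begin{itemize}
\item For $p\geq11$ this misses $n=4$. There the $r=6$ equation is not truncated; it holds because $\alpha^{4,4,6}_{4,4,2}=0$ identically.
\item Since $n/2=2$ is even, $\fsl(L_2)$ cannot be moved to the $c\neq0$ branch as you tentatively suggest.
\item More fundamentally, you give no argument that some higher-$r$ equation actually fails for the remaining $n$. That is the entire content of this branch, not an expectation to be confirmed.
\item Your remark on $g$ is misindexed. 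In $\alpha^{n,n,r}_{n,n,2}=f_{r/2}(x,x,1)\,g(x,x,1)$ with $x=n/2$, the factor $g$ does not depend on $r$. The $r$-dependence sits in $f_{r/2}$ and in the truncation $2n+r\leq2p-4$.
\end{itemize}

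The missing step is that the $r=2$ equation alone settles this branch. With $c=0$ and $b\neq0$ it reads $b\frac{n+2}4\bigl(1-2\alpha^{n,n,2}_{n,n,2}\bigr)=0$, and Equation~(\ref{EqnAssociator}) gives
\[1-2\alpha^{n,n,2}_{n,n,2}=\frac{(n+3)(n-2)(n-4)}{n(n+1)(n+2)}.\]
For even $4\leq n\leq p-3$ this vanishes modulo $p$ only at $n=4$ and $n=p-3$, which are cases~(\ref{Item2A}) and~(\ref{Item2D}).

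The same factorisation is what you need in the branch $c\neq0$. There $n\equiv2\pmod4$, $n\geq6$ and $3n\leq2p-4$, so $1-2\alpha^{n,n,2}_{n,n,2}\neq0$. The $r=2$ equation then expresses $b$ in terms of $c^2$, giving the one-dimensional solution space behind uniqueness up to rescaling $M_1$.

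Your higher-$r$ route could also be repaired, since
\[f_3(x,x,1)=-\frac{3(2x+4)(2x+3)(x+3)(x-2)}{20x^2(2x-1)(2x-2)},\]
so for $4\leq n\leq p-5$ the $r=6$ equation forces $n=4$. The proposal as written does neither.
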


Note that these are all the Lie algebras listed in \cite[\S4.3]{asymptotic}, giving an affirmative answer to \cite[Conjecture~4.4]{asymptotic}.

\begin{proof}
Simple based Lie algebra structures on these objects exist by the methods of Section~\ref{SecOrdinary}, so we just need to show that any simple based Lie algebra $\fg$ is isomorphic to one of these. By Lemma~\ref{LemLieSuper} we can assume $\fg\not\in\sVec\subset\Ver_p$, so $\fg=M_0\oplus M_1$ with $M_0=\fsl(L_1)$ acting canonically. Let $M_1=L_n$. The unknown values to determine are $\beta^{1,1}_0$ and $\beta^{1,1}_1$, the Lie bracket on $M_1\otimes M_1$. If $n$ is odd then antisymmetry gives $\beta^{1,1}_0=0$, hence $\fg$ is non-simple with ideal $M_1$. If $n=2$ then by Lemma~\ref{LemL2Alg}, $\fg=A\otimes L_2$ where $A$ is a 2-dimensional commutative algebra. But this is necessarily non-simple, and an ideal $I$ of $A$ gives an ideal $I\otimes L_2$ of $\fg$. Thus $n$ is even and at least 4.

In Equation~(\ref{EqnJacobi}), we take $i=j=k=l=1$ giving
\begin{equation}\label{EqnRank2}
\left(\begin{cases}\beta^{0,1}_1\beta^{1,1}_0&\text{if }r=2\\(\beta^{1,1}_1)^2&\text{if }r=n\\0&\text{otherwise}\end{cases}\right)+\left((-1)^{r/2}-1\right)\left(\beta^{0,1}_1\beta^{1,1}_0\alpha^{n,n,r}_{n,n,2}+(\beta^{1,1}_1)^2\alpha^{n,n,r}_{n,n,n}\right)=0.
\end{equation}
First suppose $\beta^{1,1}_1=0$. Taking $r=2$ gives $\beta^{0,1}_1\beta^{1,1}_0(1-2\alpha^{n,n,2}_{n,n,2})=0$, but we have
\[\alpha^{n,n,2}_{n,n,2}=\frac{3(n-1)!^2}{(n+2)!^2}\sum_{t=n+1}^{n+2}\frac{(-1)^t(t+1)!}{(t-n-1)!^4(2n-t)!(n+2-t)!^2}=\frac{3(n^2+2n-4)}{n(n+1)(n+2)},\]
\[1-2\alpha^{n,n,2}_{n,n,2}=\frac{(n+3)(n-2)(n-4)}{n(n+1)(n+2)}.\]
This can only be zero modulo $p$ if $n=4$ or $n=p-3$, which are cases (\ref{Item2A}) and (\ref{Item2D}). All non-zero values of $\beta^{1,1}_0$ give isomorphic Lie algebras by rescaling $M_1$, so there is a unique Lie algebra in each case.

Now suppose $\beta^{1,1}_1\neq0$, which can only happen if $n/2$ is odd (by considering $r=n$ in Equation~(\ref{EqnRank2})) and $L_n\subseteq L_n\otimes L_n$ (so $3n\leq2p-4$). We now have
\begin{align*}
\beta^{0,1}_1\beta^{1,1}_0(1-2\alpha^{n,n,2}_{n,n,2})-2(\beta^{1,1}_1)^2\alpha^{n,n,2}_{n,n,n}&=0\text{ by taking $r=2$, and }\\
\beta^{0,1}_1\beta^{1,1}_0\alpha^{n,n,r}_{n,n,2}+(\beta^{1,1}_1)^2\alpha^{n,n,r}_{n,n,n}&=0\text{ for $r/2$ odd, $2<r<n$.}
\end{align*}
As shown above, $1-2\alpha^{n,n,2}_{n,n,2}$ in the first equation is non-zero, so the value of $\beta^{1,1}_0$ is determined by the value of $\beta^{1,1}_1$ and the solution space is at most 1-dimensional. Thus all non-trivial solutions give isomorphic Lie algebras via rescaling $M_1$, so there is at most one Lie algebra up to isomorphism for each $n$. Taking $n=6$ and $n=10$ gives cases (\ref{Item2B}) and (\ref{Item2G}) respectively, so suppose $n\geq14$. For each $r\in\{6,10\}$, the two equations above are a linear system in $\beta^{0,1}_1\beta^{1,1}_0=\frac{n+2}4\beta^{1,1}_0$ and $(\beta^{1,1}_1)^2$, which has a non-zero solution when the determinant
\[d_r\coloneqq(1-2\alpha^{n,n,2}_{n,n,2})\alpha^{n,n,r}_{n,n,n}+2\alpha^{n,n,2}_{n,n,n}\alpha^{n,n,r}_{n,n,2}\]
is zero. Using Proposition~\ref{PropRational}, one can compute
\begin{align*}
d_6&=-\frac{(x+2)(x-1)(x-2)(x-5)(7x-3)(2x-3)!^2(2x+4)!}{20(x+1)(2x+1)x!^3(3x+1)!},\\
d_{10}&=\frac{(x+2)(x-3)(23x^4+384x^3+1713x^2-4528x+1680)(2x+6)!(2x-2)!(2x-5)!}{1008(2x+1)(2x+2)(2x+5)(2x-3)x!^3(3x+1)!}
\end{align*}
where $x=n/2$. Since $3n\leq2p-4$ and hence $3x\leq p-2$, the only terms in these expressions which could be zero modulo $p$ are
\[h_1(x)=7x-3\text{ and }h_2(x)=23x^4+384x^3+1713x^2-4528x+1680\]
which are in $d_6$ and $d_{10}$ respectively. We have
\[2401h_2(x)-(7889x^3+135093x^2+645456x-1276480)h_1(x)=204240=2^4\cdot3\cdot5\cdot23\cdot37,\]
so in order to have $h_1(x)=h_2(x)=0$ modulo $p$ we require $p=23$ or $p=37$. The only solution to $h_1(x)=h_2(x)=0$ modulo $23$ is $x=7$, giving $\fg=L_2\oplus L_{14}$ which is case (\ref{Item2E7}). The only solution modulo $37$ is $x=11$, giving $\fg=L_2\oplus L_{22}$ which is case (\ref{Item2E8}).
\end{proof}

\begin{remark}
If $\bk$ is not algebraically closed, then for any 2-dimensional field extension $A$ of $\bk$ we obtain another simple based Lie algebra $A\otimes L_2$. Aside from this, we have the same list of objects but with potentially more isomorphism classes of Lie algebras. In particular, rescaling $M_1$ by $\lambda$ amounts to rescaling $\beta^{1,1}_1$ by $\lambda$ and $\beta^{1,1}_0$ by $\lambda^2$. We have $\beta^{1,1}_1=0$ in cases (\ref{Item2A}) and (\ref{Item2D}), so for these the number of isomorphism classes of Lie algebras is the number of square classes, i.e. the index of $(\bk^\times)^2$ in $\bk^\times$. For the other cases, we observe in the proof that $\beta^{1,1}_0$ is determined by $\beta^{1,1}_1$, so there is exactly one isomorphism class of Lie algebras in each case for all $\bk$.
\end{remark}

\section{Simple based Lie algebras of length 3}\label{SecLength3}

\begin{theorem}\label{ThmLength3}
Let $\bk$ be algebraically closed with $\charr\bk=p\geq5$, and suppose $\fg$ is a simple based Lie algebra in $\Ver_p$ of length 3. Then either $\fg\cong\fsl_2\in\Vecc\subset\Ver_p$, or $\fg$ is isomorphic to one of the following:
\begin{enumerate}
\item\label{Item3A} $\Lie_p(\SL_4)\cong\fsl(L_3)=L_2\oplus L_4\oplus L_6$ for $p\geq11$;
\item\label{Item3B} $\Lie_p(\SO_7)\cong\fso(L_6)=L_2\oplus L_6\oplus L_{10}$ for $p\geq13$;
\item\label{Item3C} $\Lie_p(\Sp_6)\cong\fso(L_{p-7})=L_2\oplus L_6\oplus L_{10}$ for $p\geq17$;
\item\label{Item3D} $\Lie_p(\SO_{p-3})\cong\fso(L_0\oplus L_{p-5})=L_2\oplus L_6\oplus L_{p-5}$ for $p\geq11$;
\item\label{Item3E6} $\Lie_{17}(E_6)=L_2\oplus L_8\oplus L_{14}$;
\item\label{Item3F4} $\Lie_{23}(F_4)=L_2\oplus L_{10}\oplus L_{14}$;
\item\label{Item3E8} $\Lie_{29}(E_8,a_1)=L_2\oplus L_{14}\oplus L_{26}$;
\end{enumerate}
\end{theorem}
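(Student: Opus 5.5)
Existence is immediate. Each listed algebra arises from the restriction construction of Section~\ref{SecOrdinary}, and Proposition~\ref{PropVerpG} (together with Theorem~\ref{ThmSubalgs}(\ref{Item1E8}) and Remark~\ref{RemE8a1} for the $E_8(a_1)$ case) shows it is simple and based. The work is therefore in showing that nothing else occurs, and that each listed object carries a unique simple based structure.

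\emph{Reduction.} If $\fg\in\sVec$, Lemma~\ref{LemLieSuper} leaves only $\fsl_2\in\Vecc$. Otherwise I write $\fg=M_0\oplus M_1\oplus M_2$ with $M_0=\fsl(L_1)$, choose the decomposition as in Lemma~\ref{LemGoodDecomp}, and set $M_i=L_{n_i}$.
\begin{enumerate}
\item Odd summands are excluded by a parity argument. If exactly one $n_i$ is odd, $\beta^{i,i}_0=0$ by antisymmetry and $M_i$ spans an ideal. If both are odd, the pair generates an ideal unless the bracket $M_1\otimes M_2\to M_0$ is non-zero; that forces $n_1=n_2$, and then the Jacobi identity with $r=2$ should kill it, exactly as in Theorem~\ref{ThmLength2}.
\item If $n_1=n_2=2$, Lemma~\ref{LemL2Alg} gives a $3$-dimensional unital commutative algebra, which always has a proper ideal.
\item If exactly one of $n_1,n_2$ equals $2$ (say $n_1=2$), then $\beta^{1,1}_0=0$ by Lemma~\ref{LemGoodDecomp}. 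The summand $L_2\oplus L_{n_2}$ then behaves like the length-$2$ case, and I would show $M_1$ spans an ideal or the algebra is non-simple.
\item If $n_1=n_2=n\ge4$, Lemma~\ref{LemGoodDecomp} diagonalises the form to $M_0$, and Jacobi should force a non-simple structure.
\end{enumerate}
So the main case is $4\le n_1<n_2\le p-3$, both even.

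\emph{Main case.} The unknowns are $\beta^{i,j}_k$ for $i,j,k\in\{1,2\}$ together with $\beta^{1,1}_0$ and $\beta^{2,2}_0$; the entries $\beta^{0,i}_j$ are fixed by Proposition~\ref{PropLieForm}. Antisymmetry kills most of them, since $\beta^{i,i}_k$ requires $(2n_i-n_k)/2$ odd. Simplicity forces $\beta^{1,1}_0,\beta^{2,2}_0\neq0$. Moreover $M_1$ and $M_2$ must be linked, i.e.\ one of $\beta^{1,1}_2,\beta^{1,2}_1,\beta^{2,2}_1,\beta^{1,2}_2$ is non-zero, for otherwise each $M_0\oplus M_i$ is a subalgebra and, by length-$2$ Jacobi, the ideal structure breaks.

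I then write Equation~(\ref{EqnJacobi}) for index tuples such as $(1,1,1,1)$, $(1,1,2,2)$ and $(2,2,2,2)$ at small $r$ (say $r=2$ and $r=6$). These become polynomial equations in the quantities $\beta^{1,1}_0$, $\beta^{2,2}_0$, $(\beta^{1,1}_2)^2$, \dots, with coefficients given by the associators $\alpha^{2x,2x,2u}_{2y,2y,2z}$. Proposition~\ref{PropRational} writes these as $f_u\cdot g$ with $f_u$ rational in $x=n_1/2$, $y=n_2/2$, $z$. As in Theorem~\ref{ThmLength2}, the bracket values appear homogeneously after rescaling $M_1$ and $M_2$. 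Eliminating the unknowns should produce a small number of polynomial determinant conditions $D_k(x,y)\equiv0\pmod p$, and the last remaining scale ambiguity gives uniqueness up to isomorphism.

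\emph{Main obstacle.} The hard step is solving the two-variable system over all $p$. The unbounded families (\ref{Item3A})--(\ref{Item3D}) have to come out as polynomial factors: $y=2x-\ldots$ lines, or the "$p$-dependent" solutions such as $n_2=p-5$, $n_2=p-7$. The sporadic solutions ((\ref{Item3E6}) at $p=17$, (\ref{Item3F4}) at $p=23$, (\ref{Item3E8}) at $p=29$) have to come from resultants whose prime factors are bounded.

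My approach would be as follows. First, split according to which of $\beta^{1,1}_2$ (when $n_1/2$ is odd and $(n_1,n_1,n_2)$ satisfies the $p$-triangle condition), $\beta^{1,2}_1$, $\beta^{2,2}_2$, etc.\ can be non-zero. Then, in each subcase, factor the determinants symbolically. Next, take resultants of the non-generic factors to obtain an integer whose prime divisors bound $p$, and check those finitely many $p$ by computer. Finally, handle separately the boundary effects where a triple fails the $p$-triangle condition, since that is where $n_2$ near $p$ enters. Because the $p$-triangle conditions change which equations are present, I expect substantial case bookkeeping here.
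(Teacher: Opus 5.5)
There is a genuine gap, and it starts in your reduction. Step~(4) claims that $n_1=n_2=n\geq4$ never yields a simple algebra. But $L_2\oplus L_6\oplus L_6\in\Ver_{11}$ is $\Lie_{11}(\SO_8)=\fso(L_0\oplus L_6)$, which is case~(\ref{Item3D}) at $p=11$ (there $L_{p-5}=L_6$), so your plan discards a case of the theorem. In the paper's $x=y$ analysis, non-simplicity comes from an odd $u\geq3$ admissible to $(x,x)$ with $f_u(x,x,x)$ non-zero modulo $p$. For $(x,p)=(3,11)$ no such $u$ exists, so nothing is forced.

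This case is also where your uniqueness mechanism fails. Rescaling together with Lemma~\ref{LemGoodDecomp} does not normalise the bracket here. The paper has to re-choose the decomposition of $L_6\oplus L_6$, using a root $a$ of $a^3t^2-3a^2t+3at-1$, to reach $\beta^{2,2}_2=0$. Likewise, ``the last remaining scale ambiguity gives uniqueness'' is false on $L_2\oplus L_6\oplus L_{10}$ for $p\geq17$. After normalising $\beta^{2,2}_2=1$, the solutions split according to whether $\beta^{1,2}_2$ vanishes, giving the two non-isomorphic algebras of cases~(\ref{Item3B}) and~(\ref{Item3C}). Both sit at the single point $(x,y)=(3,5)$; there is no ``$n_2=p-7$'' family.

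Several other steps are wrong as written.
\begin{enumerate}
\item In~(3), Lemma~\ref{LemGoodDecomp} only concerns $\beta^{i,j}_0$ with $i\neq j$ (and excludes $M_0$), so it says nothing about $\beta^{1,1}_0$. Instead, use Lemma~\ref{LemL2Alg} to get $M_0\oplus M_1\cong A\otimes L_2$ with $A\cong\bk[x]/x^2$ or $\bk[x]/(x^2-x)$, and take $M_1=\bk x\otimes L_2$.
\item In~(1), if only $n_2$ is odd, $M_2$ need not be an ideal, since $M_2\otimes M_2\to M_1$ may be non-zero. It is Jacobi at $(2,2,1,0,n_1)$, giving $\beta^{1,1}_0\beta^{2,2}_1=0$, that finishes this case.
\item If both are odd, $M_1\otimes M_2\to L_2$ is also possible for $|n_1-n_2|=2$, not only $n_1=n_2$. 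Moreover, the Jacobi equation at $(i,j,k,l)=(1,1,2,1)$ carries the factor $(-1)^{n_1-r/2}-1$, which vanishes at $r=2$. You need $r=0$ (resp.\ $r=4$).
\item Summands $L_0$ are never treated.
\item In the main case you drop $\beta^{1,2}_0$ from the unknowns. For $|n_1-n_2|=2$ (e.g.\ $L_2\oplus L_4\oplus L_6$) it must be killed by subtracting the $(1,1,2,1,2)$ and $(1,2,1,1,2)$ Jacobi equations.
\end{enumerate}

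Finally, Jacobi at $r\in\{2,6\}$ cannot close the main case. What works is this: for every odd $u$ admissible to $(x,y)$, the $(1,1,2,2,2u)$ and $(2,2,1,1,2u)$ equations share the coefficient row $(f_u(x,y,x),f_u(x,y,y))$, acting on vectors $v_1,v_2\in\bk^2$. Simplicity then forces every minor $d(u_1,u_2)$ with $3\leq u_1<u_2$ to vanish. Bounding $p$ this way requires $u$ up to $11$, with separate treatment of $y>2x$ (where only $f_u(x,y,y)$ survives) and of $x\leq11$.
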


The entirety of this section is devoted to proving Theorem~\ref{ThmLength3}. Note that $\Lie_p(\Sp_6)$ is also well-defined for $p=13$, but this is isomorphic to $\Lie_p(\SO_7)$ as observed in \cite[Proposition~4.3.2(3)]{CEN}. For $p\geq17$, these Lie algebras are distinct; the restrictions of the Lie bracket to $L_{10}\otimes L_{10}\to L_6$ and $L_6\otimes L_{10}\to L_{10}$ are zero in case (\ref{Item3B}) and non-zero in case (\ref{Item3C}).

Simple Lie algebra structures on these objects exist by the methods of Section~\ref{SecOrdinary}, and we need to show that any simple based Lie algebra $\fg$ is isomorphic to one of these. If $\fg$ is in the subcategory $\Vecc$ or $\sVec$ then we can apply Lemma~\ref{LemLieSuper}, so we assume from now on that $\fg\not\in\sVec$, meaning we have a decomposition $\fg=M_0\oplus M_1\oplus M_2$ with $M_0=\fsl(L_1)$ acting canonically. Throughout this section, $M_1=L_{n_1}$ and $M_2=L_{n_2}$. We will first show that the objects listed in the theorem are the only objects on which there can exist a simple Lie algebra structure. This is done in multiple steps: first we eliminate cases with $n_1$ or $n_2$ odd, zero or equal to 2 in Sections~\ref{SecOddInv} and \ref{SecMultipleL2}; then we consider arbitrarily large values of $n_1,n_2$ in Sections~\ref{SecPreparation} through \ref{SecGeneric3}; this leaves some small exceptional cases which we deal with in Section~\ref{SecExceptions}. Finally, in Section~\ref{SecIsomorph} we show that all Lie algebra structures on these objects are isomorphic to the images of the listed ordinary Lie algebras. Note that Section~\ref{SecIsomorph} is the only part of the proof where we require $\bk$ to be algebraically closed; if this is not satisfied then there may be more isomorphism classes of Lie algebras on the objects listed in the theorem.

\subsection{Cases with invertible or odd components}\label{SecOddInv} If $n_1$ is even and $n_2$ is odd (or vice versa without loss of generality), then $\beta^{1,1}_2=\beta^{1,2}_0=\beta^{1,2}_1=0$ by the $p$-triangle condition and $\beta^{2,2}_0=0$ by antisymmetry. Equation~(\ref{EqnJacobi}) with $(i,j,k,l,r)=(2,2,1,0,n_1)$ gives $\beta^{1,1}_0\beta^{2,2}_1=0$, so either $\beta^{2,2}_1=0$ and $M_2$ is an ideal of $\fg$, or $\beta^{1,1}_0=0$ and $M_1\oplus M_2$ is an ideal, so $\fg$ is non-simple.

If $n_i\in\{0,p-2\}$ then $\beta^{0,i}_i=0$. Hence, if $n_1,n_2\in\{0,p-2\}$ then $M_1\oplus M_2$ is an ideal of $\fg$. Now suppose only one of $n_1,n_2$ is in $\{0,p-2\}$, and without loss of generality assume it is $n_2$. Then $\beta^{1,2}_2=0$ by the $p$-triangle condition, so if we can show that $\beta^{1,1}_2=0$ then $M_0\oplus M_1$ will be an ideal of $\fg$. If $n_2=p-2$ then this follows from the $p$-triangle condition; if instead $n_2=0$, then we can assume $n_1$ is even by the result above, so $\beta^{1,1}_2=0$ by antisymmetry.

Now suppose $n_1$ and $n_2$ are both odd and not equal to $p-2$. We have $\beta^{1,1}_0=\beta^{2,2}_0=0$ by antisymmetry, so if we can also show $\beta^{1,2}_0=0$ then $M_1\oplus M_2$ will be an ideal of $\fg$. Equation~(\ref{EqnJacobi}) with $(i,j,k,l)=(1,1,2,1)$ is
\[\left((-1)^{n_1-r/2}-1)\beta^{1,0}_1\beta^{1,2}_0\alpha^{n_1,n_1,r}_{n_2,n_1,2}\right)=0.\]
If $n=n_1=n_2$, then taking $r=0$ gives $\alpha^{n,n,0}_{n,n,2}=\frac{6n}{(n+1)(n+2)}$ which is non-zero for all $n<p-2$. If instead $n_1=n$ and $n_2=n+2$ for some integer $n$ (or vice versa without loss of generality), then taking $r=4$ gives $\alpha^{n,n,4}_{n+2,n,2}=-\frac{3(n+4)}{n(n+1)(n+2)}$, which is also non-zero since $n+2<p-2$. So in both cases, the equation above is non-trivial and gives $\beta^{1,2}_0=0$ as required. Finally, if $|n_1-n_2|>2$, then $\beta^{1,2}_0=0$ by the $p$-triangle condition.

\subsection{Cases with multiple $L_2$}\label{SecMultipleL2} From now on we can assume $n_1,n_2$ are even and non-zero. If $n_0=n_1=n_2=2$, then $\fg=A\otimes L_2$ for a 3-dimensional commutative algebra $A$ by Lemma~\ref{LemL2Alg}. This means $A$ is non-simple and has a non-zero proper ideal $I$, so $I\otimes L_2$ is an ideal of $\fg$.

Now suppose $n_1=2$ and $n_2=n\geq4$. Then $\beta^{1,1}_2=0$ (by antisymmetry for $n=4$ and by the triangle condition for $n>4$), and $M_0\oplus M_1$ forms a subalgebra. By Lemma~\ref{LemL2Alg} this subalgebra is $A\otimes L_2$ for a 2-dimensional commutative algebra $A$, so $A$ is isomorphic to either $\bk[x]/x^2$ or $\bk[x]/(x^2-x)$. By choosing a new decomposition of $\fg$ with $M_1=\bk x\otimes L_2$, we may assume $\beta^{1,1}_0=0$. We tackle the cases $n>4$ and $n=4$ separately.

If $n>4$, then $\beta^{1,2}_0=\beta^{1,2}_1=0$. Equation~(\ref{EqnJacobi}) with $(i,j,k,l,r)=(2,2,1,0,2)$ is
\[-2\beta^{2,2}_0\beta^{2,1}_2\alpha^{n,n,2}_{2,2,n}=-\frac{4}{n+2}\beta^{2,2}_0\beta^{2,1}_2=0.\]
If $\beta^{2,2}_0=0$ then $M_1\oplus M_2$ is an ideal, and if $\beta^{2,1}_2=0$ then $M_1$ is an ideal.

Now suppose $n=4$, meaning $\beta^{2,2}_2=0$. We compute Equation~(\ref{EqnJacobi}) for the following values of $(i,j,k,l,r)$:
\begin{align*}
(1,1,2,0,2):&\quad\beta^{1,2}_0\beta^{1,1}_1-2\beta^{1,2}_0\beta^{1,2}_2\alpha^{2,2,2}_{4,2,4}=\left(\beta^{1,1}_1-2\beta^{1,2}_2\right)\beta^{1,2}_0=0\\
(1,1,2,2,2):&\quad\beta^{1,2}_2\beta^{1,1}_1-2\beta^{1,2}_2\beta^{1,2}_2\alpha^{2,2,2}_{4,4,4}=\left(\beta^{1,1}_1-\frac23\beta^{1,2}_2\right)\beta^{1,2}_2=0\\
(2,2,1,0,2):&\quad-2\beta^{2,1}_0\beta^{2,1}_1\alpha^{4,4,2}_{2,2,2}-2\beta^{2,2}_0\beta^{2,1}_2\alpha^{4,4,2}_{2,2,4}=\frac9{10}\beta^{2,1}_0\beta^{2,1}_1-\frac23\beta^{2,2}_0\beta^{2,1}_2=0\\
(2,2,1,2,2):&\quad-2\beta^{2,0}_2\beta^{2,1}_0\alpha^{4,4,2}_{2,4,2}-2\beta^{2,1}_2\beta^{2,1}_1\alpha^{4,4,2}_{2,4,2}=-\frac9{10}\beta^{2,1}_0-\frac35\beta^{2,1}_2\beta^{2,1}_1=0
\end{align*}
If we multiply the first equation by $\beta^{1,2}_2$ and the second by $\beta^{1,2}_0$ and subtract, we obtain $\frac43(\beta^{1,2}_2)^2\beta^{1,2}_0=0$. If $\beta^{1,2}_2=0$ then the fourth equation gives $\beta^{1,2}_0=0$, so we have $\beta^{1,2}_0=0$ regardless. Then the third equation gives either $\beta^{2,2}_0=0$, in which case $M_1\oplus M_2$ is an ideal, or $\beta^{2,1}_2=0$, in which case $M_1$ is an ideal.

\subsection{Preparation for the generic cases}\label{SecPreparation} Now suppose $n_1,n_2$ are both even and greater than 2. First we show that $\beta^{1,2}_0=0$ in all cases. If $|n_1-n_2|\geq4$ then we have $\beta^{1,2}_0=0$ by the $p$-triangle condition, and if $n_1=n_2$ then we may assume $\beta^{1,2}_0=0$ by Lemma~\ref{LemGoodDecomp}. If instead $|n_1-n_2|=2$, by swapping $n_1$ and $n_2$ we can assume without loss of generality that $n_1/2$ is even, so $\beta^{1,1}_1=\beta^{2,2}_1=0$ by antisymmetry. Taking $(i,j,k,l,r)=(1,1,2,1,2)$ gives
\[\beta^{0,1}_1\beta^{1,2}_0\alpha^{n_1,n_1,2}_{n_2,n_1,2}+\beta^{1,2}_1\beta^{1,2}_2\alpha^{n_1,n_1,2}_{n_2,n_1,n_2}=0\]
after rescaling by $-\frac12$, while taking $(i,j,k,l,r)=(1,2,1,1,2)$ gives
\[\beta^{0,1}_1\beta^{1,2}_0\left(1+\alpha^{n_1,n_2,2}_{n_1,n_1,2}\right)+\beta^{1,2}_1\beta^{1,2}_2\alpha^{n_1,n_2,2}_{n_1,n_1,n_2}=0.\]
Using Equation~(\ref{EqnPerms}), $\alpha^{n_1,n_1,2}_{n_2,n_1,2}=\alpha^{n_1,n_2,2}_{n_1,n_1,2}$ and $\alpha^{n_1,n_1,2}_{n_2,n_1,n_2}=\alpha^{n_1,n_2,2}_{n_1,n_1,n_2}$, so we can subtract these equations to obtain $\beta^{1,2}_0=0$.

From now on we write $n_1=2x$ and $n_2=2y$ with $x,y\in\mN$ and $2\leq x\leq y$. The following special cases of Equation~(\ref{EqnJacobi}) will be useful. We will say that an odd integer $u\leq\frac{p-3}2$ is \textbf{admissible to $(x,y)$} if $u\neq x$, $u\neq y$ and $0\leq u\leq 2x$. If $u$ is admissible to $(x,y)$, then taking $(i,j,k,l,r)$ to be either $(1,1,2,2,2u)$ or $(2,2,1,1,2u)$ in Equation~(\ref{EqnJacobi}) gives
\begin{equation}\label{EqnJacobiCut}
\begin{split}
f_u(x,y,x)g(x,y,x)\beta^{1,1}_2\beta^{1,2}_1+f_u(x,y,y)g(x,y,y)(\beta^{1,2}_2)^2&=\delta_{u=1}\frac{y+1}4\beta^{1,1}_0,\\
f_u(x,y,x)g(x,y,x)(\beta^{1,2}_1)^2+f_u(x,y,y)g(x,y,y)\beta^{2,2}_1\beta^{2,1}_2&=\delta_{u=1}\frac{x+1}4\beta^{2,2}_0
\end{split}
\end{equation}
while taking $(i,j,k,l,r)$ to be $(1,1,1,1,2u)$ gives
\begin{equation}\label{Eqn1111}
\begin{split}\left(f_u(x,x,1)g(x,x,1)-\frac{\delta_{u=1}}2\right)\frac{x+1}2\beta^{1,1}_0&\\
+f_u(x,x,x)g(x,x,x)(\beta^{1,1}_1)^2&+f_u(x,x,y)g(x,x,y)\beta^{1,1}_2\beta^{1,2}_1=0.\end{split}
\end{equation}
using the functions $f,g$ defined in Section~\ref{SecRational}. Since $\beta^{1,2}_0=0$, we note that:
\begin{enumerate}
\item $M_1\oplus M_2$ is an ideal of $\fg$ if and only if $\beta^{1,1}_0=\beta^{2,2}_0=0$,
\item $M_1$ is an ideal of $\fg$ if and only if $\beta^{1,1}_0=\beta^{1,1}_2=\beta^{1,2}_2=0$, and
\item $M_2$ is an ideal of $\fg$ if and only if $\beta^{2,2}_0=\beta^{2,2}_1=\beta^{1,2}_1=0$.
\end{enumerate}
We will frequently use these to rule out non-simple algebras.

\subsection{The large $y$ generic case}\label{SecGeneric1} Suppose $y>2x$, so that $\beta^{1,2}_1=\beta^{1,1}_2=0$. This means Equation~(\ref{EqnJacobiCut}) reduces to
\[f_u(x,y,y)g(x,y,y)(\beta^{1,2}_2)^2=\delta_{u=1}\frac{y+1}4\beta^{1,1}_0,\quad f_u(x,y,y)g(x,y,y)\beta^{2,2}_1\beta^{2,1}_2=\delta_{u=1}\frac{x+1}4\beta^{2,2}_0.\]
Note that $x+2y\leq p-2$, since otherwise $g(x,y,y)=0$ and taking $u=1$ gives $\beta^{1,1}_0=\beta^{2,2}_0=0$. Now, if there is some $u\geq3$ for which $f_u(x,y,y)\neq0$, then $\beta^{1,2}_2=\beta^{2,2}_1\beta^{2,1}_2=0$ and then the above equation with $u=1$ gives $\beta^{1,1}_0=\beta^{2,2}_0=0$. So in order for $\fg$ to be simple, we require $f_u(x,y,y)=0$ for all $u$ admissible to $(x,y)$. We now determine when this holds:
\begin{enumerate}
\item If $x=2$, then $y\geq5$ and $p\geq13$. Then $f_3(2,y,y)=\frac{3(y+2)(2y+1)}{5y(2y-1)}\neq0\mod p$.
\item If $x=3$, then $y\geq7$ and $p\geq17$. Then $f_5(3,y,y)=-\frac{5(y+3)(2y+1)(2y+5)}{42y(y-1)(2y-1)}$, which is non-zero unless $2y=p-5$. This gives the object $L_2\oplus L_6\oplus L_{p-5}$ which is case (\ref{Item3D}) in the theorem.
\item If $x=5$, then $y\geq11$ and $p\geq29$. Then $f_3(5,y,y)=\frac{7(y-3)(y+4)(2y+1)}{10y(y-1)(2y-1)}\neq0\mod p$.
\item If $x=7$, then $y\geq15$ and $p\geq37$. Then
\[125(4y^2+4y-153)f_3(7,y,y)+1386(y-2)(2y-3)f_5(7,y,y)=\frac{52785(2y+1)}{y(y-1)(2y-1)}\]
is non-zero modulo $p$, since the highest prime factor of 52785 is 23. So at least one of $f_3,f_5$ is non-zero.
\item Finally, suppose that either $x\in\{4,6\}$ or $x\geq8$, so $y\geq9$ and $p\geq23$. Let $f'_u(x,y,y)$ be the largest factor of the numerator of $f_u(x,y,y)$ with no linear components. The following computational results were obtained using SageMath. All linear components appearing in $f_u(x,y,y)$ for $u\in\{3,5,7\}$ are of the form $2y+1$, $x+n$, $y-n$, $2x-n$ or $2y-n$ with $0\leq n\leq 5$, none of which can be zero modulo $p$. Moreover, the ideal in $\mZ[x,y]$ generated by $f'_u(x,y,y)$ for $u\in\{3,5,7\}$ has a Gr\"obner basis containing the polynomials
\[h_1=51480(4y+1)(4y+3),\quad h_2=5x^2+5x-12y^2-12y-6.\]
Thus, in order for all the $f$'s to be zero modulo $p$, $h_1$ and $h_2$ must be zero modulo $p$. The largest prime factor of 51480 is 13, so either $y=(p-1)/4$ or $y=(p-3)/4$, but substituting either of these into $h_2$ gives $\frac54(2x-1)(2x+3)$, giving solutions $x=(p+1)/2$ and $x=(p-3)/2$ which both contradict $x<y$. So this case has no solutions.
\end{enumerate}

\subsection{The $x=y$ generic case}\label{SecGeneric2} Next, suppose $x=y$. This means Equation~(\ref{EqnJacobiCut}) reduces to
\begin{align*}
f_u(x,x,x)g(x,x,x)\left(\beta^{1,1}_2\beta^{1,2}_1+(\beta^{1,2}_2)^2\right)&=\delta_{u=1}\frac{x+1}4\beta^{1,1}_0,\\
f_u(x,x,x)g(x,x,x)\left((\beta^{1,2}_1)^2+\beta^{2,2}_1\beta^{2,1}_2\right)&=\delta_{u=1}\frac{x+1}4\beta^{2,2}_0
\end{align*}
We assume $3x\leq p-2$, since otherwise $g(x,x,x)=0$ and $\beta^{1,1}_0=\beta^{2,2}_0=0$. If $f_u(x,x,x)\neq0$ for some $u\neq1$ then we obtain
\[\beta^{1,1}_2\beta^{1,2}_1+(\beta^{1,2}_2)^2=0\text{ and }(\beta^{1,2}_1)^2+\beta^{2,2}_1\beta^{2,1}_2=0,\]
and then by considering $u=1$ we get $\beta^{1,1}_0=\beta^{2,2}_0=0$. So in order for $\fg$ to be simple, we require $f_u(x,x,x)=0$ for all $u$ admissible to $(x,x)$. We now determine when this holds:
\begin{enumerate}
\item If $x=2$ then $f_3(2,2,2)=2$ which is non-zero modulo every $p>2$.
\item If $x=3$ then $f_5(3,3,3)=-11/6$ which is non-zero modulo every $p>11$. For $p=11$, $x=3$ gives the object $\fg=L_2\oplus L_6\oplus L_6$ which is case (\ref{Item3D}) in the theorem.
\item If $x=5$ then $f_3(5,5,5)=77/100$ which is not zero modulo every $p>2x+2=12$.
\item If $x=4$ or $x\geq6$, then we can compute
\begin{align*}
f_3(x,x,x)&=\frac{(x+1)(x+2)(2x+1)(7x^2+7x+6)}{80x(x-1)(2x-1)^2},\\
f_5(x,x,x)&=-\frac{(x+1)(x+2)(x+3)(2x+1)(23x^4+46x^3+707x^2+684x+360)}{4032x(x-1)(x-2)(2x-1)^2(2x-3)^2}.
\end{align*}
Note that the linear factors in these expressions are non-zero modulo $p$, and the non-linear factors $f'_3=7x^2+7x+6$ and $f'_5=23x^4+46x^3+707x^2+684x+360$ satisfy
\[(161x^2+161x+4650)f'_3-49f'_5=10260,\]
which has prime factors $2,3,5,19$. Thus, in order to have $f_3(x,x,x)=f_5(x,x,x)=0$ we must have $p=19$, but $f'_3\neq0$ modulo $19$ for all integer values of $x$, and thus we have no solutions.
\end{enumerate}

\subsection{The small $y$ generic case}\label{SecGeneric3} Finally, suppose $x<y\leq 2x$. We can rearrange Equation~(\ref{EqnJacobiCut}) into the matrix equations
\begin{equation}\label{EqnMatrix}
Fv_1=\begin{bmatrix}\beta^{1,1}_0(y+1)/4\\0\\0\\\vdots\end{bmatrix},\qquad
Fv_2=\begin{bmatrix}\beta^{2,2}_0(x+1)/4\\0\\0\\\vdots\end{bmatrix}
\end{equation}
where the matrix $F$ and vectors $v_1,v_2$ are
\[F=\begin{bmatrix}f_1(x,y,x) & f_1(x,y,y)\\f_3(x,y,x) & f_3(x,y,y)\\f_5(x,y,x) & f_5(x,y,y)\\\vdots & \vdots\end{bmatrix},\quad
v_1=\begin{bmatrix}g(x,y,x)\beta^{1,1}_2\beta^{1,2}_1\\g(x,y,y)(\beta^{1,2}_2)^2\end{bmatrix},\quad 
v_2=\begin{bmatrix}g(x,y,x)(\beta^{1,2}_1)^2\\g(x,y,y)\beta^{2,2}_1\beta^{2,1}_2\end{bmatrix}\]
with rows indexed by $u$ admissible to $(x,y)$. Write $d(u_1,u_2)$ for the determinant of the $2\times2$ submatrix of $F$ with rows corresponding to admissible $u_1,u_2$. If $d(u_1,u_2)$ is non-zero for some $u_1,u_2$ with $3\leq u_1<u_2$, then we can invert this submatrix in Equation~(\ref{EqnMatrix}) to obtain $v_1=v_2=0$, and then the first row gives $\beta^{1,1}_0=\beta^{2,2}_0=0$. So in order for $\fg$ to be simple, we require $d(u_1,u_2)=0$ for all $u_1,u_2$ admissible to $(x,y)$.

Let us assume $x\geq12$ so that every odd $u$ between 3 and 11 is admissible to $(x,y)$, and deal with $x\leq11$ in the next section. Let $d'(u_1,u_2)$ be the largest factor of the numerator of $d(u_1,u_2)$ with no linear components. It will suffice to consider the values $(u_1,u_2)\in U\coloneqq\{(3,5),(3,7),(7,9),(9,11)\}$. The following computational results were obtained using SageMath. All linear components appearing in $d(u_1,u_2)$ for $(u_1,u_2)\in U$ are of the form $2x+1$, $2y+1$, $x+y+1$, $y-x$, $x+n$ or $y+n$ with $0\leq n\leq 5$, and none of these can be zero modulo $p$. Moreover, the ideal generated by $d'(u_1,u_2)$ for $(u_1,u_2)\in U$ has a Gr\"obner basis containing the polynomials $c h_1(x,y)$ and $c h_2(x,y)$ where
\begin{align*}
c&= 2^{15}\times3^9\times5^4\times7^3\times11^2\times13^3\times17^2\times19,\\
h_1(x,y)&= 16y^2 + 16y + 16x^2 + 16x - 9,\\
h_2(x,y)&= 592y^4 + 1184y^3 + 255y^2 - 337y - 4x^2 - 4x - 81.
\end{align*}
Since $p>2x\geq24$, in order to have $d'(u_1,u_2)=0$ for all $(u_1,u_2)\in U$ we require $h_1(x,y)=h_2(x,y)=0$. Now, let
\[h_3(x,y)=h_1(x,y)+4h_2(x,y)=37(2y-1)(2y+3)(4y+1)(4y+3).\]
If $p=37$ then $h_1(x,y)=0$ is only satisfied by $(x,y)=(12,15)$, and for this case it can be computed that $d(3,7)=31\neq0$ modulo 37, so we may assume $p\neq37$. To have $h_3(x,y)=0$ we require $y\in\{\frac{p-3}2,\frac{p-3}4,\frac{p-1}4\}$. Substituting $y=\frac{p-3}2$ into $h_1(x,y)=0$ and solving for $x$ modulo $p$ gives $x\in\{\frac{p-1}4,\frac{p-3}4\}$, and these cases are addressed below. If we instead take $y=\frac{p-3}4$ or $\frac{p-1}4$ then we obtain $x\in\{\frac{p+1}2,\frac{p-3}2\}$, but this contradicts $x<y$.

We now show that $y=\frac{p-3}2$ and $x\in\{\frac{p-1}4,\frac{p-3}4\}$ does not admit a simple Lie algebra for $p\geq31$ (this includes some cases with $x\leq11$ so that we do not have to deal with them later). We have $x+2y\geq p-1$ and hence $\beta^{1,2}_2=\beta^{2,2}_1=\beta^{2,2}_2=0$. Using Equation~(\ref{EqnJacobiCut}) with $u=1$ and Equation~(\ref{Eqn1111}) with $u=3,5$, we obtain the matrix equation $F'v'=0$ where
\[F'=\begin{bmatrix}
-\frac{y+1}{2(x+1)g(x,x,1)}&0&f_1(x,y,x)\\
f_3(x,x,1) & f_3(x,x,x) & f_3(x,x,y)\\
f_5(x,x,1) & f_5(x,x,x) & f_5(x,x,y)
\end{bmatrix},\quad 
v'=\begin{bmatrix} \frac{x+1}2g(x,x,1)\beta^{1,1}_0 \\ g(x,x,x)(\beta^{1,1}_1)^2 \\ g(x,x,y)\beta^{1,1}_2\beta^{1,2}_1 \end{bmatrix}.\]
Substituting $(x,y)=(-\frac14,-\frac32)$ gives $\det F'=\frac{27115}{294912}$, while $(-\frac34,-\frac32)$ gives $\det F'=\frac{2465}{476872704}$. Both of these are well-defined and non-zero modulo $p$ for $p\geq31$, so we can invert this matrix to obtain $\beta^{1,1}_0=\beta^{1,1}_1=\beta^{1,1}_2\beta^{1,2}_1=0$. If $\beta^{1,1}_2=0$ then $M_1$ is an ideal, and if $\beta^{1,2}_1=0$ then the first row of Equation~(\ref{EqnMatrix}) gives $\beta^{2,2}_0=0$ and hence $M_1\oplus M_2$ is an ideal, so there are no solutions.

\subsection{Exceptional cases}\label{SecExceptions} Finally, we consider cases with $x\leq 11$ and $x<y\leq2x$. First suppose $x\leq 3$. The cases $(x,y)=(2,3),(3,5)$ appear in the theorem as cases (\ref{Item3A}), (\ref{Item3B}) and (\ref{Item3C}). For $(x,y)=(2,4),(3,4),(3,6)$, we have $\beta^{1,1}_2=0$ by antisymmetry. We have
\[\textstyle f_3(2,4,4)=\frac{81}{70}=\frac{3^4}{70},\quad f_5(3,4,4)=-\frac{65}{56}=-\frac{5\times13}{56},\quad f_5(3,6,6)=-\frac{221}{308}=-\frac{13\times17}{308},\]
so for sufficiently large $p$, Equation~(\ref{EqnJacobiCut}) gives $\beta^{1,2}_2=0$ for either $u=3$ or $u=5$. Then taking $u=1$ gives $\beta^{1,1}_0=0$ meaning $M_1$ is an ideal. The small $p$ exceptions are $(x,y,p)=(3,4,11),(3,4,13),(3,6,17)$; the first of these has $g(x,x,y)=0$ and thus $\beta^{1,1}_0=\beta^{2,2}_0=0$ by Equation~\ref{EqnJacobiCut} with $u=1$, and the other two are case (\ref{Item3D}) in the theorem.

Now consider $x\geq4$. The method at the start of Section~\ref{SecGeneric3} gives an algorithm for filtering out certain cases: given $(x,y,p)$, if we can find a pair of integers $u_1,u_2\leq\frac{p-3}2$ admissible to $(x,y)$ such that $d(u_1,u_2)\neq0$ modulo $p$, then there is no simple based Lie algebra structure on $L_2\oplus L_{2x}\oplus L_{2y}\in\Ver_p$. For some $p'\in\mN$, if one can find a list of integers $u_1,u_2,\dots\leq\frac{p'-3}2$ all admissible to $(x,y)$ such that the numerators of $d(u_1,u_2),d(u_3,u_4),\dots$ over $\mQ$ share no common prime factors $p\geq p'$, then this will eliminate $(x,y,p)$ for all $p\geq p'$. One can compute that $d(3,5)$, $d(3,7)$ and $d(3,9)$ are sufficient to eliminate the majority of cases using this argument, and then $d(3,11)$ covers the case $(x,y,p)=(7,9,53)$. The remaining exceptions (excluding cases appearing in the theorem) are $(x,y,p)$ as follows:
\[(4,5,13),(4,5,17),(4,8,19),(5,6,19),(5,7,17),(5,7,19),(5,8,23),(5,10,23),(6,7,23).\]
We shall examine these separately.
\begin{enumerate}
\item For $(x,y,p)=(4,5,13),(4,8,19),(5,7,17),(5,10,23)$, we have $\beta^{1,1}_2=\beta^{1,2}_2=0$ by either antisymmetry or the $p$-triangle condition in each case, so Equation~(\ref{EqnJacobiCut}) with $u=1$ gives $\beta^{1,1}_0=0$. Thus $M_1$ is an ideal and $\fg$ is non-simple.
\item For $(x,y,p)=(5,6,19),(5,8,23)$, we have $\beta^{1,1}_2=0$ by antisymmetry and $f_3(x,y,y)\neq0$ modulo $p$, so Equation~(\ref{EqnJacobiCut}) with $u=3$ gives $\beta^{1,2}_2=0$. Then taking $u=1$ gives $\beta^{1,1}_0=0$, so $M_1$ is an ideal and $\fg$ is non-simple.
\item For $(x,y,p)=(5,7,19)$, we have $\beta^{1,2}_2=0$ by the $p$-triangle condition and $f_3(x,y,x)\neq0$ modulo $p$. So Equation~(\ref{EqnJacobiCut}) with $u=3$ gives $\beta^{1,1}_2\beta^{1,2}_1=0$, and then taking $u=1$ gives $\beta^{1,1}_0=0$. If $\beta^{1,1}_2=0$ then $M_1$ is an ideal, and if instead $\beta^{1,2}_1=0$ then the second line of Equation~(\ref{EqnJacobiCut}) gives $\beta^{2,2}_0=0$ and $M_1\oplus M_2$ is an ideal, so $\fg$ is non-simple.
\item For $(x,y,p)=(4,5,17),(6,7,23)$, we have $\beta^{1,1}_1=0$ by antisymmetry. Equation~(\ref{EqnJacobiCut}) with $u=1,3$ and Equation~(\ref{Eqn1111}) with $u=3$ gives the equation $F''v''=0$ where
\[F''=\begin{bmatrix}
-\frac{y+1}{2(x+1)g(x,x,1)} & f_1(x,y,x) & f_1(x,y,y) \\
0 & f_3(x,y,x) & f_3(x,y,y) \\
f_3(x,x,1) & f_3(x,x,y) & 0
\end{bmatrix},\quad
v''=\begin{bmatrix} \frac{x+1}2g(x,x,1)\beta^{1,1}_0 \\ g(x,x,y)\beta^{1,1}_2\beta^{1,2}_1 \\ g(x,y,y)(\beta^{1,2}_2)^2 \end{bmatrix}.\]
One can compute that $F''$ has non-zero determinant in each case, so $\beta^{1,1}_0=\beta^{1,1}_2\beta^{1,2}_1=\beta^{1,2}_2=0$. If $\beta^{1,1}_2=0$ then $M_1$ is an ideal, and if instead $\beta^{1,2}_1=0$ then the second line of Equation~(\ref{EqnJacobiCut}) gives $\beta^{2,2}_0=0$ and $M_1\oplus M_2$ is an ideal, so $\fg$ is non-simple.
\end{enumerate}

\subsection{Lie algebra isomorphisms}\label{SecIsomorph} For each of the objects $M_0\oplus M_1\oplus M_2$ listed in the theorem, we now need to determine how many isomorphism classes of Lie algebras there are. Rescaling $M_1$ by $\lambda_1$ and $M_2$ by $\lambda_2$ has the effect of replacing $\beta^{i,j}_k$ with $\frac{\lambda_i\lambda_j}{\lambda_k}\beta^{i,j}_k$ (taking $\lambda_0=1$). This will allow us to normalise certain scalars $\beta^{i,j}_k$, and we show that in each case this will uniquely determine the remaining scalars (a more complicated strategy will be required for $L_2\oplus L_6\oplus L_6$, which corresponds to $p=11$ in case (\ref{Item3D})). Note that in all cases $\beta^{1,2}_0=0$ by the reasoning in Section~\ref{SecPreparation}.

\begin{enumerate}
\item For $L_2\oplus L_4\oplus L_6$, we have $\beta^{1,1}_1=\beta^{2,2}_1=0$ by antisymmetry. We get the following equations from the following values of $(i,j,k,l,r)$ in Equation~(\ref{EqnJacobi}):
\begin{align*}
(1,1,1,2,2):\quad&\textstyle 3\beta^{1,1}_2\beta^{1,2}_2=0,\\
(1,1,2,2,6):\quad&\textstyle \beta^{2,2}_2\beta^{1,1}_2+\frac37\beta^{1,1}_2\beta^{1,2}_1-(\beta^{1,2}_2)^2=0,\\
(1,2,1,0,2):\quad&\textstyle \frac43\beta^{1,1}_0\beta^{1,2}_1-\beta^{2,2}_0\beta^{1,1}_2=0,\\
(2,2,1,1,2):\quad&\textstyle \frac32\beta^{2,2}_0-\frac{10}7(\beta^{1,2}_1)^2=0,\\
(2,2,2,2,2):\quad&\textstyle \frac37\beta^{2,2}_0-\frac{20}9(\beta^{2,2}_2)^2=0.
\end{align*}
If $\beta^{2,2}_2=0$ then we get $\beta^{2,2}_0=\beta^{1,2}_1=0$ so that $M_2$ is an ideal. If $\beta^{1,1}_0=0$ then we get either $\beta^{2,2}_0=0$ so that $M_1\oplus M_2$ is an ideal, or $\beta^{1,1}_2=\beta^{1,2}_2=0$ so that $M_1$ is an ideal. So $\beta^{1,1}_0$ and $\beta^{2,2}_2$ are non-zero, and we can normalise to get $\beta^{1,1}_0=\beta^{2,2}_2=1$. Then the equations imply $\beta^{2,2}_0=\frac{140}{27}$, $\beta^{1,2}_2=0$, $\beta^{1,1}_2=-\frac35$ and $\beta^{1,2}_1=-\frac73$, so there is only one simple Lie algebra up to isomorphism.

\item For $L_2\oplus L_6\oplus L_{10}$ with $p=13$, we have $\beta^{1,2}_2=\beta^{2,2}_1=\beta^{2,2}_2=0$ by the $p$-triangle condition. We have the following equations:
\begin{align*}
(1,1,1,1,2):\quad&\textstyle 6\beta^{1,1}_0+5(\beta^{1,1}_1)^2+\beta^{1,1}_2\beta^{1,2}_1=0,\\
(1,1,2,2,2):\quad&\textstyle 3\beta^{1,1}_0+4\beta^{1,1}_2\beta^{1,2}_1=0,\\
(1,2,2,1,4):\quad&\textstyle 5\beta^{2,2}_0+10(\beta^{1,2}_1)^2=0.
\end{align*}
If $\beta^{1,2}_1=0$ then $\beta^{1,1}_0=\beta^{2,2}_0=0$ and $M_1\oplus M_2$ is an ideal, so $\beta^{1,2}_1\neq0$. If $\beta^{1,1}_1=0$ then the first two equations give $\beta^{1,1}_0=\beta^{1,1}_2=0$ and $M_1$ is an ideal, so $\beta^{1,1}_1\neq0$. This means we can first rescale $M_1$ to normalise $\beta^{1,1}_1=1$, and then rescale $M_2$ to normalise $\beta^{1,2}_1=1$. Then the equations imply $\beta^{1,1}_0=4$, $\beta^{1,1}_2=10$ and $\beta^{2,2}_0=11$, so there is only one simple Lie algebra up to isomorphism.

\item For $L_2\oplus L_6\oplus L_{10}$ with $p\geq17$, we have the following equations:
\begin{align*}
\text{(a)}&&(1,1,1,1,2):\quad&\textstyle \frac37\beta^{1,1}_0-\frac{20}9(\beta^{1,1}_1)^2+\beta^{1,1}_2\beta^{1,2}_1=0,\\
\text{(b)}&&(1,1,2,2,2):\quad&\textstyle 3\beta^{1,1}_0-\frac{21}{11}\beta^{1,1}_2\beta^{1,2}_1-\frac{240}{91}(\beta^{1,2}_2)^2=0,\\
\text{(c)}&&(1,1,2,2,6):\quad&\textstyle \beta^{1,1}_1\beta^{1,2}_2+\frac{102}{91}(\beta^{1,2}_2)^2=0,\\
\text{(d)}&&(1,1,2,2,10):\quad&\textstyle \beta^{1,1}_2\beta^{2,2}_2+\frac{13}{33}\beta^{1,1}_2\beta^{1,2}_1-\frac{2000}{1911}(\beta^{1,2}_2)^2=0,\\
\text{(e)}&&(2,2,1,0,6):\quad&\textstyle \beta^{1,1}_0\beta^{2,2}_1-\frac23\beta^{1,2}_2\beta^{2,2}_0=0,\\
\text{(f)}&&(2,2,1,2,10):\quad&\textstyle \frac{25}{13}\beta^{1,2}_2\beta^{2,2}_2-\frac{10}{33}\beta^{1,2}_1\beta^{1,2}_2=0,\\
\text{(g)}&&(2,2,2,2,2):\quad&\textstyle \frac{78}{55}\beta^{2,2}_0-\frac{576}{143}\beta^{1,2}_2\beta^{2,2}_1-\frac{567}{65}(\beta^{2,2}_2)^2=0.
\end{align*}
If $\beta^{2,2}_2=0$, then either $\beta^{1,2}_1=0$ or $\beta^{1,2}_2=0$ by (f), but the first of these implies the second by (d). We then get $\beta^{1,1}_2\beta^{1,2}_1=0$ by (d) which implies $\beta^{1,1}_0=0$ by (b) and $\beta^{2,2}_0=0$ by (g), so $M_1\oplus M_2$ is an ideal. Thus $\beta^{2,2}_2\neq0$, so we can rescale $M_2$ to normalise $\beta^{2,2}_2=1$. First suppose $\beta^{1,2}_2\neq0$; then we can rescale $M_1$ to normalise $\beta^{1,2}_2=1$, and then the equations above imply $\beta^{1,2}_1=\frac{165}{26}$, $\beta^{1,1}_2=\frac{4000}{13377}$, $\beta^{1,1}_0=\frac{17280}{8281}$, $\beta^{1,1}_1=-\frac{102}{91}$, $\beta^{2,2}_0=\frac{56133}{845}$, $\beta^{2,2}_1=\frac{33957}{1600}$ giving a unique solution. Now suppose $\beta^{1,2}_2=0$. If $\beta^{1,1}_1=0$, then equations (a) and (b) give $\beta^{1,1}_0=\beta^{1,1}_2\beta^{1,2}_1=0$, so that equation (d) gives $\beta^{1,1}_2=0$ and thus $M_1$ is an ideal. So $\beta^{1,1}_1\neq0$ and we can rescale $M_1$ to normalise $\beta^{1,1}_1=1$, and then the equations above give the unique solution $\beta^{1,1}_0=\frac{10}9$, $\beta^{2,2}_1=0$, $\beta^{1,1}_2=-\frac{130}{189}$, $\beta^{1,2}_1=-\frac{33}{13}$, $\beta^{2,2}_0=\frac{2079}{338}$. Hence we have shown there are exactly 2 simple Lie algebra structures on $L_2\oplus L_6\oplus L_{10}$ up to isomorphism. By computing the decompositions of $\fso_7$ and $\fsp_6$ over principal $\fsl_2$ subalgebras, one can check that the case with $\beta^{2,2}_1=\beta^{1,2}_2=0$ corresponds to $\fso_7$ while the other case corresponds to $\fsp_6$.

\item For $\fg=L_2\oplus L_6\oplus L_6$ with $p=11$, we have the following equations:
\begin{align*}
\text{(a)}&&(1,1,1,1,2):\quad&\textstyle 2\beta^{1,1}_0-\beta^{1,1}_2\beta^{1,2}_1-(\beta^{1,1}_1)^2=0,\\
\text{(b)}&&(1,1,2,2,2):\quad&\textstyle 2\beta^{1,1}_0-\beta^{1,1}_2\beta^{1,2}_1-(\beta^{1,2}_2)^2=0,\\
\text{(c)}&&(1,1,2,1,6):\quad&\textstyle 2\beta^{1,1}_1\beta^{1,2}_1+\beta^{1,1}_2\beta^{2,2}_1+\beta^{1,2}_1\beta^{1,2}_2=0,\\
\text{(d)}&&(1,1,2,2,6):\quad&\textstyle \beta^{1,1}_2\beta^{1,2}_1+\beta^{1,1}_1\beta^{1,2}_2+(\beta^{1,2}_2)^2+\beta^{1,1}_2\beta^{2,2}_2=0,\\
\text{(e)}&&(1,2,2,2,4):\quad&\textstyle \beta^{1,1}_2\beta^{2,2}_1-\beta^{1,2}_1\beta^{1,2}_2=0,\\
\text{(f)}&&(2,2,1,1,2):\quad&\textstyle 2\beta^{2,2}_0-\beta^{1,2}_2\beta^{2,2}_1-(\beta^{1,2}_1)^2=0,\\
\text{(g)}&&(2,2,2,2,2):\quad&\textstyle 2\beta^{2,2}_0-\beta^{1,2}_2\beta^{2,2}_1-(\beta^{2,2}_2)^2=0.
\end{align*}
If $\beta^{1,1}_1=\beta^{2,2}_2=0$, then (a) and (b) give $\beta^{1,2}_2=0$ while (f) and (g) give $\beta^{1,2}_1=0$, and hence equations (a) and (g) give $\beta^{1,1}_0=\beta^{2,2}_0=0$ and $M_1\oplus M_2$ is an ideal. So at least one of $\beta^{1,1}_1$ or $\beta^{2,2}_2$ is non-zero. We assume that this is $\beta^{1,1}_1$ by swapping $M_1$ and $M_2$ if needed, and rescale $M_1$ to normalise $\beta^{1,1}_1=1$.

Next we show that it is possible to choose a new decomposition of $\fg$ in which $\beta^{2,2}_2=0$. If $\beta^{2,2}_2\neq0$, then normalise it to 1 by rescaling $M_2$. Subtracting equations (c) and (e) gives $\beta^{1,2}_1(1+\beta^{1,2}_2)=0$, and equations (f) and (g) imply that $(\beta^{1,2}_1)^2=1\neq0$, hence we have $\beta^{1,2}_2=-1$. Write $t\coloneqq\beta^{1,1}_2$, which is non-zero by equation (e). Then (d) gives $\beta^{1,2}_1=-1$, and the other equations give $\beta^{2,2}_1=\frac1t$, $\beta^{1,1}_0=\frac{1-t}2$ and $\beta^{2,2}_0=\frac{t-1}{2t}$. Note that $t\neq1$ or else $\beta^{1,1}_0=\beta^{2,2}_0=0$. We will now define submodules $N_1,N_2$ of $L_6\oplus L_6$ so that $M_0\oplus N_1\oplus N_2$ is another decomposition of $\fg$. The map $N_1\oplus N_2\to M_1\oplus M_2$ is described by a $2\times2$ matrix $\sM{a&b\\c&d}$, and we take $b=c=1$ and $d=at$. This ensures that $ab\beta^{1,1}_0+cd\beta^{2,2}_0=0$, meaning the bracket map $N_1\otimes N_2\to M_0$ is zero. Meanwhile, the bracket map $N_2\otimes N_2\to N_2$ corresponds to the bottom right entry of the matrix
\[\begin{pmatrix}a&b\\c&d\end{pmatrix}^{-1}\begin{pmatrix}\beta^{1,1}_1&\beta^{1,2}_1&\beta^{1,2}_1&\beta^{2,2}_1\\\beta^{1,1}_2&\beta^{1,2}_2&\beta^{1,2}_2&\beta^{2,2}_2\end{pmatrix}\begin{pmatrix}a^2&ab&ab&b^2\\ac&ad&bc&bd\\ac&bc&ad&bd\\c^2&cd&cd&d^2\end{pmatrix}\]
which evaluates to $a^3t^2-3a^2t+3at-1$ divided by $\det\sM{a&b\\c&d}=a^2t-1$. Let us take $a$ to be a root of this cubic polynomial. Note that if $t=\frac1{a^2}$ then this polynomial becomes $4(\frac1a-1)$, and hence $a=t=1$ contradicting $t\neq1$, thus $\det\sM{a&b\\c&d}\neq0$ and $N_1,N_2$ are well-defined. We have now found a decomposition of $\fg$ for which $\beta^{1,2}_0=\beta^{2,2}_2=0$, and by rescaling $N_1$ we can normalise $\beta^{1,1}_1$ to 1 again. Now equations (f) and (g) give $\beta^{1,2}_1=0$, and then (e) becomes $\beta^{1,1}_2\beta^{2,2}_1=0$. If $\beta^{2,2}_1=0$ then (f) gives $\beta^{2,2}_0=0$ so that $M_2$ is an ideal, so instead we must have $\beta^{1,1}_2=0$. Then (a) gives $\beta^{1,1}_0=\frac12$, so (b) and (d) give $\beta^{1,2}_2=-1$. Finally, we can rescale $N_2$ to normalise $\beta^{2,2}_1=1$ (once again if $\beta^{2,2}_1=0$ then $M_2$ is an ideal), and then $\beta^{2,2}_0=-\frac12$. Thus there is a unique simple Lie algebra up to isomorphism.

\item For $L_2\oplus L_6\oplus L_{p-5}$ with $p\geq13$, note that $\beta^{2,2}_2=0$ by the $p$-triangle condition. Similarly, $\beta^{1,1}_2=\beta^{1,2}_1=0$ for $p>17$. If instead $p\in\{13,17\}$, then antisymmetry shows that $\beta^{1,1}_2=0$, and we use the equation $(1,2,1,0,6)$ below for $\beta^{1,2}_1$. We have the following equations:
\begin{align*}
(1,1,1,1,2):\quad&\textstyle \frac37\beta^{1,1}_0-\frac{20}9(\beta^{1,1}_1)^2=0,\\
(1,1,2,2,2):\quad&\textstyle \frac{p-3}4\beta^{1,1}_0-\frac{80(p-6)(p-7)}{(p-1)(p-2)(p-3)}(\beta^{1,2}_2)^2=0,\\
(1,1,2,2,6):\quad&\textstyle \beta^{1,2}_2\beta^{1,1}_1+\frac{24(p^2-8p-3)}{(p-1)(p-2)(p-3)}(\beta^{1,2}_2)^2=0,\\
(1,2,1,0,6):\quad&\textstyle \beta^{1,1}_0\beta^{1,2}_1=0\text{ for }p\in\{13,17\},\\
(2,2,2,2,2):\quad&\textstyle \frac{(p-2)(p-7)(p-9)}{4(p-4)(p-5)}\beta^{2,2}_0-\frac{140(p+1)(p-6)(p-7)(p-9)}{(p-1)(p-2)(p-3)(p-4)(p-5)}\beta^{1,2}_2\beta^{2,2}_1=0.
\end{align*}
If $\beta^{1,1}_1=0$ then $\beta^{1,1}_0=\beta^{2,2}_0=0$ and $M_1\oplus M_2$ is an ideal, so $\beta^{1,1}_1\neq0$ and we can rescale $M_1$ to normalise $\beta^{1,1}_1=1$. The first two equations give $\beta^{1,1}_0\neq0$ (hence $\beta^{1,2}_1=0$) and $\beta^{1,2}_2\neq0$. If $\beta^{2,2}_0=0$ then $\beta^{2,2}_1=0$ and $M_2$ is an ideal, so $\beta^{2,2}_0\neq0$ and we can rescale $M_2$ to normalise $\beta^{2,2}_0=1$. Then the equations imply $\beta^{1,1}_0=\frac{140}{27}$, $\beta^{1,2}_2=-\frac1{12}$ and $\beta^{2,2}_1=\frac3{70}$. So there is only one simple Lie algebra up to isomorphism.

\item For $L_2\oplus L_8\oplus L_{14}$ with $p=17$, we have $\beta^{1,1}_1=0$ by antisymmetry and $\beta^{1,2}_2=\beta^{2,2}_1=\beta^{2,2}_2=0$ by the $p$-triangle condition. We have the following equations:
\begin{align*}
(1,1,1,1,2):\quad&\textstyle 8\beta^{1,1}_0+2\beta^{1,1}_2\beta^{1,2}_1=0,\\
(2,2,1,1,2):\quad&\textstyle 11\beta^{2,2}_0+(\beta^{1,2}_1)^2=0.
\end{align*}
If $\beta^{1,2}_1=0$ then $\beta^{1,1}_0=\beta^{2,2}_0=0$ and $M_1\oplus M_2$ is an ideal, so $\beta^{1,2}_1\neq0$ and we can rescale $M_2$ to normalise $\beta^{1,2}_1=1$. If $\beta^{1,1}_2=0$ then $\beta^{1,1}_0=0$ and $M_1$ is an ideal, so $\beta^{1,1}_2\neq0$ and we can rescale $M_1$ to normalise $\beta^{1,1}_2=1$. Then the equations imply $\beta^{1,1}_0=4$ and $\beta^{2,2}_0=3$, so there is only one simple Lie algebra up to isomorphism.
 
\item For $L_2\oplus L_{10}\oplus L_{14}$ with $p=23$, we have the following equations:
\begin{align*}
\text{(a)}&&(1,1,1,1,2):\quad&\textstyle \beta^{1,1}_0+4\beta^{1,1}_2\beta^{1,2}_1+21(\beta^{1,1}_1)^2=0,\\
\text{(b)}&&(1,1,2,2,2):\quad&\textstyle 4\beta^{1,1}_0+14\beta^{1,1}_2\beta^{1,2}_1+15(\beta^{1,2}_2)^2=0,\\
\text{(c)}&&(1,1,2,1,6):\quad&\textstyle 18\beta^{1,1}_1\beta^{1,2}_1+12\beta^{1,2}_1\beta^{1,2}_2=0,\\
\text{(d)}&&(1,1,2,2,6):\quad&\textstyle 3\beta^{1,1}_2\beta^{1,2}_1+22(\beta^{1,2}_2)^2=0,\\
\text{(e)}&&(2,2,1,1,6):\quad&\textstyle 22\beta^{1,2}_2\beta^{2,2}_1+3(\beta^{1,2}_1)^2=0,\\
\text{(f)}&&(2,2,1,1,2):\quad&\textstyle 3\beta^{2,2}_0+15\beta^{1,2}_2\beta^{2,2}_1+14(\beta^{1,2}_1)^2=0,\\
\text{(g)}&&(2,2,2,1,10):\quad&\textstyle 7\beta^{1,2}_1\beta^{2,2}_1+2\beta^{2,2}_1\beta^{2,2}_2=0.
\end{align*}
If $\beta^{1,1}_1=0$, then either $\beta^{1,2}_1=0$ or $\beta^{1,2}_2=0$ by (c), but these imply each other by (d) and (e). We then get $\beta^{1,1}_0=\beta^{2,2}_0=0$ by (a) and (f), and $M_1\oplus M_2$ is an ideal. Thus $\beta^{1,1}_1\neq0$ and we can rescale $M_1$ to normalise $\beta^{1,1}_1=1$. If $\beta^{2,2}_2=0$, then either $\beta^{1,2}_1=0$ or $\beta^{2,2}_1=0$ by (g), but the second implies the first by (e), and we get the same contradiction again. Thus $\beta^{2,2}_2\neq0$ and we can rescale $M_2$ to normalise $\beta^{2,2}_2=1$. Then the equations above imply $\beta^{1,1}_0=22$, $\beta^{1,1}_2=6$, $\beta^{1,2}_1=3$, $\beta^{1,2}_2=10$, $\beta^{2,2}_0=7$ and $\beta^{2,2}_1=5$, and there is only one simple Lie algebra up to isomorphism.

\item For $L_2\oplus L_{14}\oplus L_{26}$ with $p=29$, we have $\beta^{1,2}_2=\beta^{2,2}_1=\beta^{2,2}_2=0$ by the $p$-triangle condition. We have the following equations:
\begin{align*}
(1,1,1,1,2):\quad&\textstyle 19\beta^{1,1}_0+5\beta^{1,1}_2\beta^{1,2}_1+28(\beta^{1,1}_1)^2=0,\\
(1,1,1,1,6):\quad&\textstyle 19\beta^{1,1}_0+14\beta^{1,1}_2\beta^{1,2}_1+21(\beta^{1,1}_1)^2=0,\\
(2,2,1,1,2):\quad&\textstyle 4\beta^{2,2}_0+9(\beta^{1,2}_1)^2=0.
\end{align*}
If $\beta^{1,2}_1=0$ then $\beta^{1,1}_1=\beta^{1,1}_0=\beta^{2,2}_0=0$ and $M_1\oplus M_2$ is an ideal. Thus $\beta^{1,2}_1\neq0$, and we can rescale $M_2$ to normalise $\beta^{1,2}_1=1$. If $\beta^{1,1}_1=0$ then $\beta^{1,1}_0=\beta^{1,1}_2=0$ and $M_1$ is an ideal. Thus $\beta^{1,1}_1\neq0$ and we can rescale $M_1$ to normalise $\beta^{1,1}_1=1$. Then the equations above imply $\beta^{1,1}_0=28$, $\beta^{1,1}_2=4$ and $\beta^{2,2}_0=5$, so there is only one simple Lie algebra up to isomorphism.
\end{enumerate}

This completes the proof of Theorem~\ref{ThmLength3}.

\section{Computational results and exceptional cases}\label{SecComp}

In this section we describe a method to search for Lie algebras computationally, and give some results for length 4 Lie algebras in $\Ver_p$.

Let $\bk$ be algebraically closed with $\charr\bk=p\geq5$, and let $\fg=\bigoplus_i M_i$ and $\beta^{i,j}_k$ be as in Section~\ref{SecLieForm}. Fix the objects $M_i$, but let the scalars $\beta^{i,j}_k$ vary. The antisymmetry and Jacobi relations are polynomial equations in the variables $\beta^{i,j}_k$, and they generate an ideal $I_{\Lie}$ in $\bk[\beta^{i,j}_k]$. A Lie algebra structure on $\fg$ corresponds to a point in the affine algebraic set $V_{\Lie}=V(I_{\Lie})$. If the objects $M_i$ are pairwise non-isomorphic, then subobjects of $\fg$ are of the form $\bigoplus_{i\in S}M_i$ for a proper non-zero subset $S$ of the indices. Such a subobject is an ideal if and only if $\beta^{i,j}_k=0$ for all $i\in S$, $k\not\in S$ and $j$ arbitrary. If we define $I_S$ to be the ideal generated by all such $\beta^{i,j}_k$, and then define $I_{\rm n.s.}=\bigcap_S I_S$, then the algebraic set $V_{\rm n.s.}$ corresponding to $I_{\rm n.s.}$ is the set of all ``non-simple operations'' on $\fg$. This means there exists a simple Lie algebra structure on $\fg$ if and only if $V_{\Lie}\not\subseteq V_{\rm n.s.}$ if and only if $I_{\rm n.s.}\not\subseteq\sqrt{I_{\Lie}}$. Note that since the generators of $I_{\rm n.s.}$ have coefficients in $\mF_p$, it suffices to use the radical of $I_{\Lie}$ over $\mF_p$. If there are summands with multiplicity, more variables can be introduced to capture all possible subobjects. For example, if $\fg=M_1\oplus M_2$ with $M_1\cong M_2$, then any non-zero proper subobject not equal to $M_1$ is a linear combination of $M_1$ and $M_2$ with coefficients $\alpha$ and 1 for some $\alpha\in\bk$, and this is an ideal if and only if
\[\det\begin{pmatrix}\alpha\beta^{1,1}_1+\beta^{2,1}_1&\alpha\\\alpha\beta^{1,1}_2+\beta^{2,1}_2&1\end{pmatrix}=\det\begin{pmatrix}\alpha\beta^{1,2}_1+\beta^{2,2}_1&\alpha\\\alpha\beta^{1,2}_2+\beta^{2,2}_2&1\end{pmatrix}=0.\]
Thus the collection of non-simple algebras is now described by a variety over $\bk[\beta^{i,j}_k,\alpha]$.

Computing the ideals $I_{\rm n.s.}$ and $\sqrt{I_{\Lie}}$ and checking this inclusion is possible with computer algebra software, and below we report the results of an implementation of this method in SageMath. The following is a list of length 4 objects in $\Ver_p$ (and not in $\sVec\subset\Ver_p$) for $5\leq p\leq67$ which admit a simple based Lie algebra structure. Known Lie algebras on these objects are also listed, with non-isomorphic algebras on the same object listed separately.
\begin{itemize}
\newcounter{length4counter}
\item Principal restrictions of simple groups:
\begin{enumerate}
\item $L_2\oplus L_4\oplus L_6\oplus L_8=\fsl(L_4)$ for $p\geq11$ ($A_4$)
\item $L_2\oplus L_6\oplus L_6\oplus L_{10}=\fso(L_0\oplus L_6)$ for $p\geq13$ ($D_4$)
\item $L_2\oplus L_6\oplus L_{10}\oplus L_{14}=\fso(L_8)$ for $p\geq17$ ($B_4$)
\item $L_2\oplus L_6\oplus L_{10}\oplus L_{14}=\fso(L_{p-9})$ for $p\geq19$ ($C_4$)
\item $L_2\oplus L_6\oplus L_{10}\oplus L_{p-7}=\fso(L_0\oplus L_{p-7})$ for $p\geq19$ ($D_{(p-5)/2}$)
\item $L_2\oplus L_{10}\oplus L_{14}\oplus L_{22}$ for $p\geq29$ ($F_4$)
\item $L_2\oplus L_6\oplus L_{10}\oplus L_{10}=\fso(L_0\oplus L_{10})$ for $p=17$ ($D_6$)
\item $L_2\oplus L_8\oplus L_{10}\oplus L_{16}$ for $p=19$ ($E_6$)
\item $L_2\oplus L_{14}\oplus L_{26}\oplus L_{38}$ for $p=41$ ($E_8$)
\item $L_2\oplus L_{14}\oplus L_{22}\oplus L_{34}$ for $p=43$ ($E_8$)
\setcounter{length4counter}{\value{enumi}}
\end{enumerate}
\item Subregular restrictions of simple groups (the $a_1$ notation for the exceptional types follows \cite{St}):
\begin{enumerate}
\setcounter{enumi}{\value{length4counter}}
\item $L_0\oplus L_1\oplus L_1\oplus L_2=\fsl(L_0\oplus L_1)$ for $p\geq5$ ($A_2$ with a root subgroup)
\item $L_0\oplus L_2\oplus L_2\oplus L_2=\fso(L_{p-3}^{\oplus 2})$ for $p\geq5$ ($B_2$ with a short root subgroup)
\item\label{Item4B} $L_0\oplus L_2\oplus L_{p-3}\oplus L_{p-3}=\fso(L_0\oplus L_0\oplus L_{p-3})$ for $p\geq7$ ($B_{(p-1)/2}$ with a subregular nilpotent, or $\fosp(2|2)$ with $\phi$ given by $\fsl_2\cong\fsl(0|2)\hookrightarrow{\fosp(2|2)}$)
\item\label{Item4D} $L_2\oplus L_2\oplus L_{p-5}\oplus L_{p-3}=\fso(L_2\oplus L_{p-3})$ for $p\geq11$ ($D_{(p+1)/2}$ with a subregular nilpotent, $\fosp(3|2)$ with $\phi$ the diagonal map into the even part $\fsl_2\oplus\fsl_2$)
\item $L_2\oplus L_2\oplus L_2\oplus L_4$ for $p\geq7$ ($G_2(a_1)$)
\item $L_2\oplus L_4\oplus L_6\oplus L_{10}$ for $p=13$ ($E_6(a_1)$)
\item $L_2\oplus L_6\oplus L_{10}\oplus L_{16}$ for $p=19$ ($E_7(a_1)$)
\item $L_2\oplus L_{10}\oplus L_{18}\oplus L_{28}$ for $p=31$ ($E_8(a_1)$)
\setcounter{length4counter}{\value{enumi}}
\end{enumerate}
\item Restrictions of Lie superalgebras:
\begin{enumerate}
\setcounter{enumi}{\value{length4counter}}
\item $L_2\oplus L_4\oplus L_{p-6}\oplus L_{p-4}$ for $p\geq7$ ($Q(2)$)
\item $L_1\oplus L_2\oplus L_3\oplus L_4$ for $p=7$ ($H(5)$)
\end{enumerate}
\end{itemize}
All of these based Lie algebras $\fg$ lift to a $\Ver_p$-group $(G,\phi)$ as follows. If $\fg$ is invariantless, then we can apply Corollary~\ref{CorBased}. For the non-invariantless cases, each comes from an algebraic group $G$ and has one copy of the summand $\unit=L_0$ in $\Lie(G)$. The inclusion of this summand integrates to a coweight $\mG_m\to G$, giving an action of $\mG_m$ on $\Lie(G)$ and an isomorphism of $\Lie(\mG_m)$ with the summand $\unit$. The image in $\Ver_p$ of this action and isomorphism gives a Harish-Chandra pair structure on $(\mG_m,\fg)$. To obtain a $\Ver_p$-group we require a map $\phi_0:\mZ/2\to\mG_m$ inducing the parity action on $\fg$. For the case $L_0\oplus L_1\oplus L_1\oplus L_2$, write $\fg$ as the image of $\fsl_3$ restricted to the upper left Levi $\fsl_2$, take $\mG_m$ to be the coweight $(1,1,-2)$, and take $\phi_0$ to be the inclusion $\mZ/2\cong\mu_2\hookrightarrow\mG_m$. The other two cases are in $\Ver_p^+$, so we can take $\phi_0$ to be trivial.

\end{document}